\documentclass[12pt]{amsart}

\usepackage{amsmath}
\usepackage{amssymb}
\usepackage{enumerate}

\makeatletter
\@namedef{subjclassname@2010}{%
  \textup{2010} Mathematics Subject Classification}
\makeatother


\newtheorem{thm}{Theorem}[section]
\newtheorem{cor}[thm]{Corollary}
\newtheorem{lem}[thm]{Lemma}



\theoremstyle{definition}
\newtheorem{defin}[thm]{Definition}
\newtheorem{rem}[thm]{Remark}



\numberwithin{equation}{section}

\textwidth=13.5cm
\textheight=23cm
\parindent=16pt
\topmargin=-0.5cm




\begin{document}


\baselineskip=17pt



\title[Morrey spaces for Schr\"odinger operators with nonnegative potentials]{Morrey spaces for Schr\"odinger operators with nonnegative potentials, fractional integral operators and the Adams inequality on the Heisenberg groups}

\author[H. Wang]{Hua Wang}
\address{School of Mathematics and Systems Science, Xinjiang University, Urumqi 830046, P. R. China\\
}
\email{wanghua@pku.edu.cn}
\date{}

\begin{abstract}
Let $\mathcal L=-\Delta_{\mathbb H^n}+V$ be a Schr\"odinger operator on the Heisenberg group $\mathbb H^n$, where $\Delta_{\mathbb H^n}$ is the sublaplacian on $\mathbb H^n$ and the nonnegative potential $V$ belongs to the reverse H\"older class $RH_s$ with $s\in[Q/2,\infty)$. Here $Q=2n+2$ is the homogeneous dimension of $\mathbb H^n$. For given $\alpha\in(0,Q)$, the fractional integral operator associated with the Schr\"odinger operator $\mathcal L$ is defined by $\mathcal I_{\alpha}={\mathcal L}^{-{\alpha}/2}$. In this article, the author introduces the Morrey space $L^{p,\kappa}_{\rho,\infty}(\mathbb H^n)$ and weak Morrey space $WL^{p,\kappa}_{\rho,\infty}(\mathbb H^n)$ associated with $\mathcal L$, where $(p,\kappa)\in[1,\infty)\times[0,1)$ and $\rho(\cdot)$ is an auxiliary function related to the nonnegative potential $V$. The relation between the fractional integral operator and the maximal operator on the Heisenberg group is established. From this, the author further obtains the Adams (Morrey-Sobolev) inequality on these new spaces. It is shown that the fractional integral operator $\mathcal I_{\alpha}={\mathcal L}^{-{\alpha}/2}$ is bounded from $L^{p,\kappa}_{\rho,\infty}(\mathbb H^n)$ to $L^{q,\kappa}_{\rho,\infty}(\mathbb H^n)$ with $0<\alpha<Q$, $1<p<Q/{\alpha}$, $0<\kappa<1-{(\alpha p)}/Q$ and $1/q=1/p-{\alpha}/{Q(1-\kappa)}$, and bounded from $L^{1,\kappa}_{\rho,\infty}(\mathbb H^n)$ to $WL^{q,\kappa}_{\rho,\infty}(\mathbb H^n)$ with $0<\alpha<Q$, $0<\kappa<1-\alpha/Q$ and $1/q=1-{\alpha}/{Q(1-\kappa)}$.  Moreover, in order to deal with the extreme cases $\kappa\geq 1-{(\alpha p)}/Q$, the author also introduces the spaces $\mathrm{BMO}_{\rho,\infty}(\mathbb H^n)$ and $\mathcal{C}^{\beta}_{\rho,\infty}(\mathbb H^n)$, $\beta\in(0,1]$ associated with $\mathcal L$. In addition, it is proved that $\mathcal I_{\alpha}$ is bounded from $L^{p,\kappa}_{\rho,\infty}(\mathbb H^n)$ to $\mathrm{BMO}_{\rho,\infty}(\mathbb H^n)$ under $\kappa=1-{(\alpha p)}/Q$, and bounded from $L^{p,\kappa}_{\rho,\infty}(\mathbb H^n)$ to $\mathcal{C}^{\beta}_{\rho,\infty}(\mathbb H^n)$ under $\kappa>1-{(\alpha p)}/Q$ and $\beta=\alpha-{(1-\kappa)Q}/p$.
\end{abstract}
\subjclass[2010]{Primary 42B20; 35J10; Secondary 22E25; 22E30}
\keywords{Schr\"odinger operator; reverse H\"older class; fractional integral operators; Heisenberg group; Morrey spaces; Adams inequality}

\maketitle

\section{Introduction}

\subsection{The Heisenberg group $\mathbb H^n$}
This paper deals with Morrey spaces for Schr\"odinger operators with nonnegative potentials and fractional integral operators on the Heisenberg groups. The Heisenberg group is the most well-known example from the realm of nilpotent Lie groups and plays an important role in several branches of mathematics such as representation theory, partial differential equations, several complex analysis and harmonic analysis. It is a remarkable fact that the Heisenberg group arises in two fundamental but different settings in analysis. On the one hand, it can be identified with the group of translations of the Siegel upper half space in $\mathbb C^{n+1}$ and plays an important role in our understanding of some problems in the complex function theory of the unit ball. On the other hand, it can be realized as the group of unitary operators generated by the position and momentum operators in the context of quantum mechanics.

We write $\mathbb N=\{1,2,3,\dots\}$ for the set of natural numbers and $\mathbb N_0=\{0,1,2,\dots\}$ for the set of natural numbers extended by zero. The sets of real and complex numbers are denoted by $\mathbb R$ and $\mathbb C$, respectively. The \emph{Heisenberg group} $\mathbb H^n$ is a nilpotent Lie group whose underlying manifold is $\mathbb C^n\times\mathbb R$. The group structure (the multiplication law) is defined by
\begin{equation*}
(z,t)\cdot(z',t'):=\Big(z+z',t+t'+2\mathrm{Im}(z\cdot\overline{z'})\Big),
\end{equation*}
where $z=(z_1,z_2,\dots,z_n)$, $z'=(z_1',z_2',\dots,z_n')\in\mathbb C^n$, and
\begin{equation*}
z\cdot\overline{z'}:=\sum_{j=1}^nz_j\overline{z_j'}.
\end{equation*}
Under this multiplication $\mathbb H^n$ becomes a nilpotent unimodular Lie group, the Haar measure on $\mathbb H^n$ being the Lebesgue measure $dzdt$ on $\mathbb C^n\times\mathbb R=\mathbb R^{2n}\times\mathbb R$. The measure of any measurable set $E\subset\mathbb H^n$ is denoted by $|E|$. The corresponding Lie algebra $\mathfrak{h}^n$ is generated by the $(2n+1)$ left-invariant vector fields on $\mathbb H^n$
\begin{equation*}
\begin{cases}
X_j:=\displaystyle\frac{\partial}{\partial x_j}+2y_j\frac{\partial}{\partial t},\quad j=1,2,\dots,n;&\\
Y_j:=\displaystyle\frac{\partial}{\partial y_j}-2x_j\frac{\partial}{\partial t},\quad j=1,2,\dots,n;&\\
T:=\displaystyle\frac{\partial}{\partial t}.&
\end{cases}
\end{equation*}
All non-trivial commutation relations are given by
\begin{equation*}
[X_j,Y_j]=-4T,\quad j=1,2,\dots,n.
\end{equation*}
The sublaplacian $\Delta_{\mathbb H^n}$ is explicitly given by
\begin{equation*}
\Delta_{\mathbb H^n}:=\sum_{j=1}^n\big(X_j^2+Y_j^2\big).
\end{equation*}
It can be easily seen that the inverse element of $u=(z,t)\in\mathbb H^n$ is $u^{-1}=(-z,-t)$, and the identity is the origin $(0,0)$. For each positive number $a>0$, we define the \emph{dilation} on $\mathbb H^n$ by
\begin{equation*}
\delta_a(z,t):=(az,a^2t),\quad a>0.
\end{equation*}
For any given $(z,t)\in\mathbb H^n$, the \emph{homogeneous norm} of $(z,t)$ is given by
\begin{equation*}
|(z,t)|:=\big(|z|^4+t^2\big)^{1/4}.
\end{equation*}
Observe that $|(z,t)^{-1}|=|(z,t)|$ and
\begin{equation*}
\big|\delta_a(z,t)\big|=\big(|az|^4+(a^2t)^2\big)^{1/4}=a|(z,t)|,\quad a>0.
\end{equation*}
In addition, this norm $|\cdot|$ satisfies the triangle inequality and leads to a left-invariant distant $d(u,v)=\big|u^{-1}\cdot v\big|$ for any $u=(z,t)$, $v=(z',t')\in\mathbb H^n$. The ball of radius $r$ centered at $u$ is denoted by
\begin{equation*}
B(u,r):=\big\{v\in\mathbb H^n:d(u,v)<r\big\}.
\end{equation*}
 For $(u,r)\in\mathbb H^n\times(0,\infty)$, it can be shown that the measure of $B(u,r)$ is
\begin{equation*}
|B(u,r)|=r^{Q}\cdot|B(0,1)|,
\end{equation*}
where $Q:=2n+2$ is the \emph{homogeneous dimension} of $\mathbb H^n$ and $|B(0,1)|$ is the measure of the unit ball in $\mathbb H^n$. A direct calculation shows that the measure of the unit ball in $\mathbb H^n$ is
\begin{equation*}
|B(0,1)|=\frac{2\pi^{n+\frac{\,1\,}{2}}\Gamma(\frac{\,n\,}{2})}{(n+1)\Gamma(n)\Gamma(\frac{n+1}{2})}.
\end{equation*}
For any ball $B=B(u,r)$ in $\mathbb H^n$ and $\lambda\in(0,\infty)$, we shall use the notation $\lambda B$ to denote $B(u,\lambda r)$ and use $B^{\complement}$ to denote its complement $\mathbb H^n\backslash B$. It is clear that
\begin{equation}\label{homonorm}
|B(u,\lambda r)|=\lambda^{Q}\cdot|B(u,r)|,\quad (u,r)\in\mathbb H^n\times(0,\infty),\;\lambda\in(0,\infty).
\end{equation}
For a radial function $F$, we have the following integration formula:
\begin{equation}\label{radial}
\int_{\mathbb H^n}F(u)\,du=c\int_0^\infty F(\varrho)\varrho^{Q-1}\,d\varrho,\quad Q=2n+2,
\end{equation}
where $c$ is a positive constant. For more information about the harmonic analysis on the Heisenberg groups, we refer the readers to \cite[Chapter XII]{stein2}, \cite{folland}, \cite{thangavelu} and the references therein.

\subsection{The Schr\"odinger operator $\mathcal L$}
Let $V:\mathbb H^n\rightarrow\mathbb R$ be a nonnegative locally integrable function that belongs to the \emph{reverse H\"older class} $RH_s$ for some exponent $1<s<\infty$; i.e., there exists a positive constant $C$ such that the following reverse H\"older inequality
\begin{equation*}
\left(\frac{1}{|B|}\int_B V(w)^s\,dw\right)^{1/s}\leq C\left(\frac{1}{|B|}\int_B V(w)\,dw\right)
\end{equation*}
holds for every ball $B$ in $\mathbb H^n$. For given $V\in RH_s$ with $s\geq Q/2$ and $V\not\equiv0$, we introduce the \emph{critical radius function} $\rho(u)=\rho(u;V)$ which is given by
\begin{equation}\label{rho}
\rho(u):=\sup\bigg\{r\in(0,\infty):\frac{1}{r^{Q-2}}\int_{B(u,r)}V(w)\,dw\leq1\bigg\},\quad u\in\mathbb H^n,
\end{equation}
where $B(u,r)$ denotes the ball in $\mathbb H^n$ centered at $u$ and with radius $r$. It is well known that this auxiliary function satisfies $0<\rho(u)<\infty$ for any $u\in\mathbb H^n$ under the above assumption on $V$ (see \cite{lu,lin}). We need the following known result concerning the critical radius function \eqref{rho}.
\begin{lem}[\cite{lu}]\label{N0}
Let $\rho$ be as in \eqref{rho}. If $V\in RH_s$ with $s\geq Q/2$, then there exist constants $C_0\geq 1$ and $N_0>0$ such that, for all $u$ and $v$ in $\mathbb H^n$,
\begin{equation}\label{com}
\frac{\,1\,}{C_0}\left[1+\frac{|v^{-1}u|}{\rho(u)}\right]^{-N_0}\leq\frac{\rho(v)}{\rho(u)}\leq C_0\left[1+\frac{|v^{-1}u|}{\rho(u)}\right]^{\frac{N_0}{N_0+1}}.
\end{equation}
\end{lem}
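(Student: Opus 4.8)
The plan is to derive \eqref{com} from two elementary scaling properties of the set function
\[
\Phi(u,r):=\frac{1}{r^{Q-2}}\int_{B(u,r)}V(w)\,dw,
\]
together with an algebraic self-improvement step that accounts for the asymmetry between the two exponents $N_0$ and $N_0/(N_0+1)$. First I would record the basic estimates. Using H\"older's inequality, the reverse H\"older condition $V\in RH_s$ on the ball $B(u,R)$, and the identity $|B(u,r)|=r^{Q}|B(0,1)|$, one obtains for $0<r\le R$ the \emph{reverse H\"older scaling}
\[
\Phi(u,r)\le C\Big(\frac{r}{R}\Big)^{\beta}\Phi(u,R),\qquad \beta:=2-\frac{Q}{s}\ (\ge 0),
\]
equivalently $\Phi(u,R)\ge C^{-1}(R/r)^{\beta}\Phi(u,r)$, so that $\Phi(u,\cdot)$ is essentially increasing. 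Since $RH_s\subset A_\infty$, the measure $V\,dw$ is doubling, i.e. $\int_{B(u,2R)}V\le C_d\int_{B(u,R)}V$; iterating this yields the \emph{doubling upper scaling}
\[
\int_{B(u,R)}V\,dw\le C\Big(\frac{R}{\rho(u)}\Big)^{N_1}\rho(u)^{\,Q-2}\qquad(R\ge\rho(u)),\quad N_1:=\log_2 C_d .
\]
By continuity of $r\mapsto\Phi(u,r)$ and the defining supremum \eqref{rho} I also use the normalisation $\Phi(u,\rho(u))=1$, while the triangle inequality for the homogeneous norm gives $B(v,r)\subseteq B(u,r+|v^{-1}u|)$. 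I set $D:=|v^{-1}u|=d(u,v)$ and $N_0:=N_1/\beta$.

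With these in hand I would split into two regimes. In the \emph{near} regime $D\le\rho(u)$ one has $B(v,\rho(u))\subseteq B(u,2\rho(u))$ and, symmetrically, $B(u,\rho(u))\subseteq B(v,2\rho(u))$; combining the single-scale doubling bound with the scaling estimates and $\Phi(u,\rho(u))=\Phi(v,\rho(v))=1$ forces $\rho(u)\approx\rho(v)$, which is \eqref{com} with a bracket comparable to $1$. The decisive point is the \emph{far} regime $D>\rho(u)$, where I first prove the lower bound
\[
\rho(v)\ge C^{-1}\rho(u)\Big(1+\frac{D}{\rho(u)}\Big)^{-N_0}.
\]
Assuming the nontrivial case $\rho(v)\le\rho(u)$, the lower reverse-H\"older scaling at the centre $v$ gives $\Phi(v,\rho(u))\ge C^{-1}(\rho(u)/\rho(v))^{\beta}$, hence $\int_{B(v,\rho(u))}V\ge C^{-1}\rho(u)^{Q-2}(\rho(u)/\rho(v))^{\beta}$; since $\rho(u)\le D$ we have $B(v,\rho(u))\subseteq B(u,2D)$, and the doubling upper scaling bounds $\int_{B(u,2D)}V\le C(D/\rho(u))^{N_1}\rho(u)^{Q-2}$. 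Comparing these two estimates for the mass on $B(v,\rho(u))\subseteq B(u,2D)$ eliminates $\rho(u)^{Q-2}$ and yields $(\rho(u)/\rho(v))^{\beta}\le C(D/\rho(u))^{N_1}$, i.e. the displayed lower bound with $N_0=N_1/\beta$.

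It remains to obtain the right-hand inequality of \eqref{com}. Writing $s_0:=1/\rho(u)$ and $t_0:=1/\rho(v)$ and applying the lower bound just proved with the roles of $u$ and $v$ interchanged gives $s_0\le C\,t_0(1+Dt_0)^{N_0}$; I would then distinguish the cases $Dt_0\le1$, $Ds_0\le1$, and $Ds_0>1$ and solve the resulting power inequality for $s_0$, which produces $s_0\le C\,t_0(1+Ds_0)^{N_0/(N_0+1)}$, that is $\rho(v)\le C\rho(u)(1+D/\rho(u))^{N_0/(N_0+1)}$. Enlarging $C_0$ and $N_0$ to absorb the near regime then completes the proof. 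The main obstacle is precisely the far-regime lower bound: one must resist bounding the small-ball mass $\int_{B(v,\rho(v))}V$ by the large-ball mass $\int_{B(u,\rho(v)+D)}V$, which is far too lossy and would yield a false \emph{growing} lower bound, and instead argue locally at scale $\rho(u)$ as above; the asymmetric exponent $N_0/(N_0+1)$ is then not established directly but extracted from the symmetric estimate by the elementary yet delicate self-improvement computation. A secondary technical point is the endpoint $s=Q/2$, where $\beta=0$ and the reverse-H\"older scaling must be supplemented by the (still valid) doubling information, as carried out in \cite{lu}.
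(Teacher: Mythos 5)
The paper does not actually prove this lemma: it is imported verbatim from Lu \cite{lu} (see also \cite[Lemma 4]{lin}), with Shen's \cite[Lemma 1.4]{shen} as the Euclidean model, so there is no in-paper argument to compare against. Your reconstruction is, in substance, Shen's original proof transported to $\mathbb H^n$, and it is sound. The two inputs you isolate --- the reverse H\"older scaling $\Phi(u,r)\le C(r/R)^{\beta}\Phi(u,R)$ for $r\le R$ with $\beta=2-Q/s$, obtained from H\"older plus the reverse H\"older inequality on the larger ball and $|B(u,r)|=r^{Q}|B(0,1)|$, and the doubling growth $\int_{B(u,R)}V\,dw\lesssim (R/\rho(u))^{N_1}\rho(u)^{Q-2}$ for $R\ge\rho(u)$ --- are exactly the ones used in \cite{shen} and \cite{lu}, and they survive on $\mathbb H^n$ because the homogeneous norm satisfies the triangle inequality and balls scale like $r^{Q}$. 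Your far-regime comparison of the mass of $B(v,\rho(u))\subseteq B(u,2D)$ at the single scale $\rho(u)$ is the correct move, and the algebraic self-improvement $s_0\le Ct_0(1+Dt_0)^{N_0}\Rightarrow t_0\ge C^{-1}s_0(1+Ds_0)^{-N_0/(N_0+1)}$ does close under the case analysis you indicate. Two points should be made precise rather than waved at. First, at the endpoint $s=Q/2$ your exponent $\beta$ vanishes and both the near-regime comparison and the definition $N_0=N_1/\beta$ degenerate; the standard repair is not ``extra doubling information'' but Gehring's self-improvement $RH_s\subset RH_{s+\varepsilon}$, which lets you assume $\beta>0$ from the outset. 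Second, the doubling of the measure $V\,dw$ (your appeal to $RH_s\subset A_\infty$) is a genuinely nontrivial classical fact that must be quoted for spaces of homogeneous type, since it is the sole source of the constant $C_d$ and hence of $N_0$; likewise the normalization $\Phi(u,\rho(u))=1$ silently uses $V\not\equiv 0$ and the continuity of $r\mapsto\Phi(u,r)$, both available here but worth stating.
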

Lemma \ref{N0} is due to Lu \cite{lu} (see also \cite[Lemma 4]{lin}). In the setting of $\mathbb R^n$, this result was given by Shen in \cite[Lemma 1.4]{shen}. As a straightforward consequence of \eqref{com}, we can deduce that for each integer $k\in\mathbb N_0$, the following inequality
\begin{equation}\label{com2}
\left[1+\frac{2^kr}{\rho(v)}\right]\geq \frac{1}{C_0}\left[1+\frac{r}{\rho(u)}\right]^{-\frac{N_0}{N_0+1}}\left[1+\frac{2^kr}{\rho(u)}\right]
\end{equation}
holds for any $v\in B(u,2^kr)$ with $u\in\mathbb H^n$ and $r\in(0,\infty)$, $C_0$ is the same as in \eqref{com}.

Let $V\in RH_s$ with $s\geq Q/2$ and $V\not\equiv0$. For such a potential $V$, we consider the time independent \emph{Schr\"odinger operator} on $\mathbb H^n$ (see \cite{lin}),
\begin{equation}\label{sch}
\mathcal L:=-\Delta_{\mathbb H^n}+V,
\end{equation}
and its associated semigroup $\big\{\mathcal T^{\mathcal L}_s\big\}_{s>0}$,
\begin{equation*}
\mathcal T^{\mathcal L}_sf(u):=e^{-s\mathcal L}f(u)=\int_{\mathbb H^n}P_s(u,v)f(v)\,dv,\quad f\in L^2(\mathbb H^n),~s>0,
\end{equation*}
where $P_s(u,v)$ denotes the kernel of the operator $e^{-s\mathcal L},s>0$.
\subsection{Fractional integral operator}
First we recall the fractional power of the Laplacian operator on $\mathbb R^n$. For given $\alpha\in(0,n)$, the classical fractional integral operator $I^{\Delta}_{\alpha}$ (also referred to as the Riesz potential) is defined by
\begin{equation*}
I^{\Delta}_{\alpha}(f):=(-\Delta)^{-\alpha/2}(f),
\end{equation*}
where $\Delta:=\sum_{j=1}^n\frac{\partial^2}{\partial x_j^2}$ is the standard Laplacian operator on $\mathbb R^n$. Let $\mathcal S(\mathbb R^n)$ be the space of all Schwartz functions. If $f\in\mathcal S(\mathbb R^n)$, then by virtue of the Fourier transform, we have
\begin{equation*}
\begin{cases}
\displaystyle\widehat{I^{\Delta}_{\alpha}f}(\xi)=(2\pi|\xi|)^{-\alpha}\widehat{f}(\xi),\quad \xi\in\mathbb R^n;&\\
\displaystyle\widehat{\big(|\cdot|^{\alpha-n}\big)}(\xi)=\gamma(\alpha)(2\pi|\xi|)^{-\alpha},\quad \xi\in\mathbb R^n,&
\end{cases}
\end{equation*}
where
\begin{equation*}
\gamma(\alpha):=\frac{\pi^{\frac{n}{2}}2^\alpha\Gamma(\frac{\alpha}{2})}{\Gamma(\frac{n-\alpha}{2})}
\end{equation*}
with $\Gamma(\cdot)$ being the usual gamma function. By using the above equations, we get the following expression of $I^{\Delta}_{\alpha}$.
\begin{equation}\label{frac}
I^{\Delta}_{\alpha}f(x)=\frac{1}{\gamma(\alpha)}\int_{\mathbb R^n}\frac{f(y)}{|x-y|^{n-\alpha}}\,dy=\frac{1}{\gamma(\alpha)}\bigg(\frac{1}{|\cdot|^{n-\alpha}}\ast f\bigg)(x),\quad x\in\mathbb R^n.
\end{equation}
It is well known that the Hardy-Littlewood-Sobolev theorem states that the Riesz potential operator $I^{\Delta}_{\alpha}$ is bounded from $L^p(\mathbb R^n)$ into $L^q(\mathbb R^n)$ for $0<\alpha<n$, $1<p<n/{\alpha}$ and $1/q=1/p-{\alpha}/n$. Also we know that $I^{\Delta}_{\alpha}$ is bounded from $L^1(\mathbb R^n)$ into $WL^q(\mathbb R^n)$ for $0<\alpha<n$ and $q=n/{(n-\alpha)}$ (see, for example, \cite{stein}).

The classical Morrey space $M^{p,\lambda}$ was originally introduced by Morrey in \cite{morrey} to study the local behavior of solutions to second order elliptic partial differential equations. Since then, this space was systematically developed by many authors. Nowadays this space has been studied intensively and widely used in analysis, geometry, mathematical physics and other related fields. For the properties and applications of classical Morrey space, we refer the readers to \cite{adams1,adams2,adams3,fazio1,fazio2,taylor} and the references therein. We denote by $M^{p,\lambda}(\mathbb R^n)$ the Morrey space, the space of all $p$-locally integrable functions $f$ on $\mathbb R^n$ such that
\begin{equation*}
\begin{split}
\|f\|_{M^{p,\lambda}(\mathbb R^n)}:=&\sup_{x\in\mathbb R^n,r>0}r^{-\lambda/p}\|f\|_{L^p(B(x,r))}\\
=&\sup_{x\in\mathbb R^n,r>0}r^{-\lambda/p}\bigg(\int_{B(x,r)}|f(y)|^p\,dy\bigg)^{1/p}<\infty,
\end{split}
\end{equation*}
where $1\leq p<\infty$ and $0\leq\lambda\leq n$. Note that $M^{p,0}(\mathbb R^n)=L^p(\mathbb R^n)$ and $M^{p,n}(\mathbb R^n)=L^\infty(\mathbb R^n)$ by the Lebesgue differentiation theorem. If $\lambda<0$ or $\lambda>n$, then $M^{p,\lambda}(\mathbb R^n)=\Theta$, where $\Theta$ is the set of all functions equivalent to 0 on $\mathbb R^n$. We also denote by $WM^{p,\lambda}(\mathbb R^n)$ the weak Morrey space, which consists of all measurable functions $f$ on $\mathbb R^n$ such that
\begin{equation*}
\begin{split}
\|f\|_{WM^{p,\lambda}(\mathbb R^n)}:=&\sup_{x\in\mathbb R^n,r>0}r^{-\lambda/p}\|f\|_{WL^p(B(x,r))}\\
=&\sup_{x\in\mathbb R^n,r>0}r^{-\lambda/p}\sup_{\sigma>0}\sigma\big|\big\{y\in B(x,r):|f(y)|>\sigma\big\}\big|^{1/p}<\infty.
\end{split}
\end{equation*}
The boundedness of the Riesz potential $I^{\Delta}_{\alpha}$ in Morrey spaces was originally studied by Adams \cite{adams} in 1975. His results can be summarized as follows:
\begin{thm}[Adams \cite{adams}]
Let $0<\alpha<n$, $1\leq p<n/{\alpha}$, $0<\lambda<n-\alpha p$ and $1/q=1/p-\alpha/{(n-\lambda)}$. Then for $p>1$ the Riesz potential $I^{\Delta}_{\alpha}$ is bounded from $M^{p,\lambda}(\mathbb R^n)$ into $M^{q,\lambda}(\mathbb R^n)$, and for $p=1$ the Riesz potential $I^{\Delta}_{\alpha}$ is bounded from $M^{1,\lambda}(\mathbb R^n)$ into $WM^{q,\lambda}(\mathbb R^n)$.
\end{thm}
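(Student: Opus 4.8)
The plan is to follow Adams' original strategy, which reduces the Morrey boundedness of $I^{\Delta}_{\alpha}$ to a pointwise domination by the Hardy--Littlewood maximal function together with the known mapping properties of that maximal operator on Morrey spaces. Write $Mf$ for the Hardy--Littlewood maximal function of $f$. The central step is to establish a Hedberg-type pointwise inequality
\[
\big|I^{\Delta}_{\alpha}f(x)\big|\leq C\big(Mf(x)\big)^{p/q}\,\|f\|_{M^{p,\lambda}(\mathbb R^n)}^{1-p/q},\quad x\in\mathbb R^n.
\]
To obtain it, I would fix $\delta>0$ and split the defining integral \eqref{frac} into an integral over $B(x,\delta)$ and over its complement. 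On $B(x,\delta)$, decomposing into dyadic annuli $2^{-j-1}\delta\leq|x-y|<2^{-j}\delta$ and bounding $|x-y|^{\alpha-n}$ below on each piece yields, after summing the convergent geometric series in $j$ (here $\alpha>0$ is used), the estimate $C\delta^{\alpha}Mf(x)$. On the complement, a second dyadic decomposition into annuli $2^k\delta\leq|x-y|<2^{k+1}\delta$, combined with H\"older's inequality and the very definition of $\|f\|_{M^{p,\lambda}}$ on the balls $B(x,2^{k+1}\delta)$, produces a geometric series whose ratio carries the exponent $\alpha-(n-\lambda)/p$; this is exactly where the hypothesis $\lambda<n-\alpha p$ enters, guaranteeing a negative exponent and hence convergence, and giving the bound $C\delta^{\alpha-(n-\lambda)/p}\|f\|_{M^{p,\lambda}}$. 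Adding the two contributions and optimizing $C[\delta^{\alpha}Mf(x)+\delta^{\alpha-(n-\lambda)/p}\|f\|_{M^{p,\lambda}}]$ over $\delta$ (balancing the terms at $\delta\sim(\|f\|_{M^{p,\lambda}}/Mf(x))^{p/(n-\lambda)}$) produces the displayed inequality, once one checks via $1/q=1/p-\alpha/(n-\lambda)$ that the exponent $1-\alpha p/(n-\lambda)$ equals $p/q$.

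For the case $p>1$, I would raise the pointwise inequality to the $q$-th power and integrate over an arbitrary ball $B(x,r)$. Using $\big((Mf)^{p/q}\big)^q=(Mf)^p$, this bounds $(\int_{B(x,r)}|I^{\Delta}_{\alpha}f|^q\,dy)^{1/q}$ by $C\|f\|_{M^{p,\lambda}}^{1-p/q}(\int_{B(x,r)}(Mf)^p\,dy)^{1/q}$. Invoking the boundedness of the Hardy--Littlewood maximal operator on $M^{p,\lambda}(\mathbb R^n)$ (the Chiarenza--Frasca theorem), namely $\|Mf\|_{M^{p,\lambda}}\leq C\|f\|_{M^{p,\lambda}}$, bounds the remaining local integral by $C\|f\|_{M^{p,\lambda}}^{p/q}r^{\lambda/q}$; collecting the powers of $\|f\|_{M^{p,\lambda}}$ and taking the supremum over $x$ and $r$ gives $\|I^{\Delta}_{\alpha}f\|_{M^{q,\lambda}}\leq C\|f\|_{M^{p,\lambda}}$. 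For $p=1$ the argument is parallel but uses the weak-type version: from the pointwise bound, the super-level set $\{|I^{\Delta}_{\alpha}f|>\sigma\}$ is contained in a super-level set of $Mf$ at a corresponding height, and the weak $(1,1)$ boundedness of $M$ on $M^{1,\lambda}(\mathbb R^n)$ (i.e. $M\colon M^{1,\lambda}\to WM^{1,\lambda}$) lets the powers of $\sigma$ cancel, yielding $\|I^{\Delta}_{\alpha}f\|_{WM^{q,\lambda}}\leq C\|f\|_{M^{1,\lambda}}$.

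The main obstacle, and the only genuinely delicate point, is the second dyadic sum in the pointwise inequality: one must correctly track the exponent $\alpha-(n-\lambda)/p$ arising from combining H\"older's inequality with the Morrey control on each annulus, and recognize that the scaling hypothesis $\lambda<n-\alpha p$ is precisely the condition that renders this tail series summable. Everything downstream --- the optimization in $\delta$ and the passage from the pointwise estimate to the norm estimate --- is then routine, once the boundedness of $M$ on Morrey (and weak Morrey) spaces is taken as known.
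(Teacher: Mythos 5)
Your proposal is correct and is essentially the argument the paper itself relies on: the statement is quoted from Adams without proof, but the paper's Lemma \ref{keylem} and the proofs of Theorems \ref{mainthm:1} and \ref{mainthm:2} reproduce exactly your Hedberg-type splitting into dyadic annuli, the H\"older/Morrey estimate on the far part (with the same convergence condition $\lambda<n-\alpha p$), the optimization in $\delta$, and the reduction to the (weak) boundedness of the maximal operator on Morrey spaces. No gaps.
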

\begin{rem}
The above theorem can be considered as the analogue of the Hardy-Littlewood-Sobolev theorem in Morrey spaces, and is often called \emph{the Adams inequality} (Morrey-Sobolev inequality).
\end{rem}
Next we are going to discuss the fractional integral operators on the Heisenberg groups. For given $\alpha\in(0,Q)$ with $Q=2n+2$, the fractional integral operator $I_{\alpha}$ (also referred to as the Riesz potential) is defined by (see \cite{xiao})
\begin{equation}\label{frac2}
I_{\alpha}(f):=(-\Delta_{\mathbb H^n})^{-\alpha/2}(f),
\end{equation}
where $\Delta_{\mathbb H^n}$ is the sublaplacian on $\mathbb H^n$ defined above. Let $f$ and $g$ be integrable functions defined on $\mathbb H^n$. Define the \emph{convolution} $f\ast g$ by
\begin{equation*}
(f\ast g)(u):=\int_{\mathbb H^n}f(v)g(v^{-1}u)\,dv.
\end{equation*}
We denote by $H_s(u)$ the convolution kernel of heat semigroup $\big\{T_s\big\}_{s>0}$. Namely,
\begin{equation*}
T_sf(u):=e^{s\Delta_{\mathbb H^n}}f(u)=\int_{\mathbb H^n}H_s(v^{-1}u)f(v)\,dv.
\end{equation*}
For any $u=(z,t)\in\mathbb H^n$, it was proved in \cite[Theorem 4.2]{xiao} that $I_{\alpha}$ can be expressed by the following formula:
\begin{equation}\label{frac3}
\begin{split}
I_{\alpha}f(u)&=\frac{1}{\Gamma(\alpha/2)}\int_0^{\infty}e^{s\Delta_{\mathbb H^n}}f(u)\,s^{\alpha/2-1}ds\\
&=\frac{1}{\Gamma(\alpha/2)}\int_0^{\infty}\big(H_s*f\big)(u)\,s^{\alpha/2-1}ds.
\end{split}
\end{equation}
For any $u=(z,t)\in\mathbb H^n$, it is well known that the heat kernel $H_s(u)$ has the explicit expression:
\begin{equation*}
H_s(z,t)=(2\pi)^{-1}(4\pi)^{-n}\int_{\mathbb R}\bigg(\frac{|\lambda|}{\sinh|\lambda|s}\bigg)^n\exp\left\{-\frac{|\lambda||z|^2}{4}\coth|\lambda|s-i\lambda s\right\}d\lambda.
\end{equation*}
We consider the heat equation associated to the sublaplacian
\begin{equation*}
\partial_sF(u,s)=\Delta_{\mathbb H^n}F(u,s),\quad(u,s)\in\mathbb H^n\times(0,\infty)
\end{equation*}
with the initial condition $F(u,0)=f(u)$. In fact, the function $H_s(u)$ stated above exists as a solution to the heat equation. Moreover, it satisfies the following Gaussian upper bound estimate (see \cite{jerison} for instance)
\begin{equation}\label{heatkernel}
0\leq H_s(u)\leq C\cdot s^{-Q/2}\exp\bigg(-\frac{|u|^2}{As}\bigg),
\end{equation}
where the positive constants $C$ and $A$ are independent of $s\in(0,\infty)$ and $u\in\mathbb H^n$.

The Hardy-Littlewood-Sobolev theorem on the Heisenberg group is established in \cite{xiao}.
\begin{thm}[\cite{xiao}]
Let $0<\alpha<Q$, $1\leq p<Q/{\alpha}$ and $1/q=1/p-\alpha/{Q}$. Then for $p>1$ the Riesz potential $I_{\alpha}$ is bounded from $L^{p}(\mathbb H^n)$ into $L^{q}(\mathbb H^n)$, and for $p=1$ the Riesz potential $I_{\alpha}$ is bounded from $L^{1}(\mathbb H^n)$ into $WL^{q}(\mathbb H^n)$.
\end{thm}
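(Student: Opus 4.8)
The plan is to follow the classical Hedberg argument: first derive a pointwise bound for $I_{\alpha}f$ in terms of the Hardy--Littlewood maximal function, then invoke the maximal theorem on the space of homogeneous type $\mathbb H^n$. Throughout, $M$ denotes the (uncentered) Hardy--Littlewood maximal operator on $\mathbb H^n$. Starting from the subordination formula \eqref{frac3}, I would split the time integral at a parameter $R>0$ to be chosen,
\[
\Gamma(\alpha/2)\,I_{\alpha}f(u)=\int_0^{R}(H_s*f)(u)\,s^{\alpha/2-1}\,ds+\int_{R}^{\infty}(H_s*f)(u)\,s^{\alpha/2-1}\,ds,
\]
and estimate the two pieces by entirely different mechanisms.

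For the near-diagonal piece $s\in(0,R)$, the key point is that the Gaussian bound \eqref{heatkernel} allows one to dominate $H_s$ by a radially decreasing, $L^1$-normalized approximate identity, so that $|(H_s*f)(u)|\le C\,Mf(u)$ uniformly in $s>0$; integrating against $s^{\alpha/2-1}$ then yields $\int_0^R|(H_s*f)(u)|\,s^{\alpha/2-1}\,ds\le C\,Mf(u)\,R^{\alpha/2}$. For the far piece $s\in(R,\infty)$, I would instead apply H\"older's inequality together with the $L^{p'}$-estimate that follows from \eqref{heatkernel} and the homogeneous dilation $u=\delta_{\sqrt s}v$ (whose Jacobian is $s^{Q/2}$), namely $\|H_s\|_{L^{p'}(\mathbb H^n)}\le C\,s^{-Q/(2p)}$. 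Since the hypothesis $p<Q/\alpha$ forces $\alpha/2-Q/(2p)<0$, the resulting $s$-integral converges and gives $\int_R^{\infty}|(H_s*f)(u)|\,s^{\alpha/2-1}\,ds\le C\,\|f\|_{L^p}\,R^{\alpha/2-Q/(2p)}$. Balancing the two bounds by choosing $R=\big(\|f\|_{L^p}/Mf(u)\big)^{2p/Q}$ produces the pointwise Hedberg inequality
\[
|I_{\alpha}f(u)|\le C\,\big(Mf(u)\big)^{p/q}\,\|f\|_{L^p}^{1-p/q},
\]
where the exponent identity $p/q=1-\alpha p/Q$ is precisely the relation $1/q=1/p-\alpha/Q$.

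With this pointwise bound in hand, both conclusions follow quickly. For $p>1$, raising to the $q$-th power, integrating over $\mathbb H^n$, and using the strong $(p,p)$ boundedness of $M$ gives $\|I_{\alpha}f\|_{L^q}\le C\,\|Mf\|_{L^p}^{p/q}\,\|f\|_{L^p}^{1-p/q}\le C\,\|f\|_{L^p}$. For $p=1$, the pointwise bound turns the superlevel set $\{|I_{\alpha}f|>\sigma\}$ into a superlevel set for $Mf$; applying the weak $(1,1)$ inequality for $M$ and simplifying the exponents via $q=Q/(Q-\alpha)$ yields $\sigma\,\big|\{u:|I_{\alpha}f(u)|>\sigma\}\big|^{1/q}\le C\,\|f\|_{L^1}$, which is exactly the claimed $L^1\to WL^q$ bound.

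I expect the main technical obstacle to be the uniform domination $|(H_s*f)(u)|\le C\,Mf(u)$, for this is the step where the Gaussian estimate \eqref{heatkernel} must be converted into genuine control by the maximal function. The natural route is to decompose $\mathbb H^n$ into the dyadic annuli $\{v:2^{k-1}\sqrt s\le d(u,v)<2^{k}\sqrt s\}$, on each of which the Gaussian weight is at most $C\,s^{-Q/2}\exp(-c\,4^{k})$ while the annulus has measure comparable to $2^{kQ}s^{Q/2}$; estimating each term by the ball-average of $|f|$ and summing the rapidly decaying series in $k$ returns a constant multiple of $Mf(u)$. Everything else is either a routine computation (the $L^{p'}$ heat-kernel norm via the dilation change of variables) or a direct appeal to the Hardy--Littlewood maximal theorem, which is available on $\mathbb H^n$ since it is a space of homogeneous type.
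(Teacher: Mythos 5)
Your argument is correct, but it is worth noting that the paper does not actually prove this statement: it is quoted verbatim from Xiao and He \cite{xiao}, and the author's only contribution near it is the pointwise kernel domination \eqref{claim}, which reduces the Schr\"odinger variant (Theorem \ref{strong}) to the cited classical result. What you supply instead is a self-contained Hedberg-type proof working directly from the subordination formula \eqref{frac3}: splitting the time integral at $s=R$, controlling the near piece by $Mf(u)\,R^{\alpha/2}$ via the Gaussian bound \eqref{heatkernel} and an annulus decomposition, controlling the far piece by $\|f\|_{L^p}\,R^{\alpha/2-Q/(2p)}$ via H\"older and the dilation computation $\|H_s\|_{L^{p'}}\lesssim s^{-Q/(2p)}$ (which is where the hypothesis $p<Q/\alpha$ enters), and then optimizing $R$. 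All the exponent arithmetic checks out, including $p/q=1-\alpha p/Q$, and the passage from the pointwise bound to the strong $(p,q)$ and weak $(1,q)$ conclusions via the maximal theorem on $\mathbb H^n$ is routine. Your route is in fact the same mechanism the author deploys later in Lemma \ref{keylem}, except that there the Hedberg splitting is performed in the space variable ($|v^{-1}u|<\sigma$ versus $|v^{-1}u|\ge\sigma$) against the kernel bound of Lemma \ref{kernel}, whereas you split in the time variable against the heat kernel; the two are essentially equivalent here, but your version avoids first extracting the kernel estimate and gives an independent proof of the quoted theorem rather than a citation. The only points to make explicit in a polished write-up are the degenerate cases $Mf(u)=0$ or $Mf(u)=\infty$ in the choice of $R$, and the fact that on the (unimodular, noncommutative) group the H\"older step for $\int_{\mathbb H^n}H_s(v^{-1}u)|f(v)|\,dv$ uses left invariance of Haar measure together with $|u^{-1}|=|u|$; both are standard.
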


\begin{rem}
In \cite{xiao}, they also showed that $1/q=1/p-\alpha/Q$ is a necessary condition for ensuring $\|I_{\alpha}(f)\|_{L^q(\mathbb H^n)}\leq C\|f\|_{L^p(\mathbb H^n)}$ when $1<p<Q/\alpha$, and $1/q=1-\alpha/Q$ is also a necessary condition for ensuring $\|I_{\alpha}(f)\|_{WL^q(\mathbb H^n)}\leq C\|f\|_{L^1(\mathbb H^n)}$.
\end{rem}
For $1\leq p<\infty$, let $L^p(\mathbb H^n)$ be the Lebesgue space of all measurable functions $f$ on $\mathbb H^n$ such that
\begin{equation*}
\big\|f\big\|_{L^p(\mathbb H^n)}:=\bigg(\int_{\mathbb H^n}|f(u)|^p\,du\bigg)^{1/p}<\infty.
\end{equation*}
The weak Lebesgue space $WL^p(\mathbb H^n)$ consists of all measurable functions $f$ on $\mathbb H^n$ such that
\begin{equation*}
\big\|f\big\|_{WL^p(\mathbb H^n)}:=
\sup_{\lambda>0}\lambda\cdot\big|\big\{u\in\mathbb H^n:|f(u)|>\lambda\big\}\big|^{1/p}<\infty.
\end{equation*}

Let $\mathcal L$ be the Schr\"odinger operator on $\mathbb H^n$ as in \eqref{sch}. Since $V\geq0$ and $V\in L^1_{\mathrm{loc}}(\mathbb H^n)$, $\mathcal L$ generates a $(C_0)$ contraction semigroup $\big\{\mathcal T^{\mathcal L}_s\big\}_{s>0}=\big\{e^{-s\mathcal L}\big\}_{s>0}$. Let $P_s(u,v)$ denote the kernel of the semigroup $\big\{e^{-s\mathcal L}\big\}_{s>0}$. By the \emph{Trotter product formula} and \eqref{heatkernel}, we know that the kernel $P_s(u,v)$ satisfies the Gaussian upper bound
\begin{equation}\label{heat}
0\leq P_s(u,v)\leq H_s(v^{-1}u)\leq C\cdot s^{-Q/2}\exp\bigg(-\frac{|v^{-1}u|^2}{As}\bigg),\quad s>0.
\end{equation}
Moreover, this estimate \eqref{heat} can be improved when $V$ belongs to the reverse H\"older class $RH_s$ for some $s\geq Q/2$. The auxiliary function $\rho(u)$ arises naturally in this context.
\begin{lem}\label{ker1}
Let $V\in RH_s$ with $s\geq Q/2$, and let $\rho(u)$ be the auxiliary function determined by $V$. For every positive integer $N\in\mathbb N$, there exists a positive constant $C_N>0$ such that, for all $u$ and $v$ in $\mathbb H^n$,
\begin{equation*}
0\leq P_s(u,v)\leq C_N\cdot s^{-Q/2}\exp\bigg(-\frac{|v^{-1}u|^2}{As}\bigg)\bigg[1+\frac{\sqrt{s\,}}{\rho(u)}+\frac{\sqrt{s\,}}{\rho(v)}\bigg]^{-N},\quad s>0.
\end{equation*}
\end{lem}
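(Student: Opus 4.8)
The plan is to prove first the one-sided estimate
\begin{equation*}
0\le P_s(u,v)\le C_N\,s^{-Q/2}\exp\Big(-\frac{|v^{-1}u|^2}{As}\Big)\Big(1+\frac{\sqrt s}{\rho(u)}\Big)^{-N},\qquad s>0,
\end{equation*}
carrying the decay in the single variable $u$, and then to symmetrize. Granting the one-sided bound, the self-adjointness of $\mathcal L$ gives $P_s(u,v)=P_s(v,u)$, so the same bound holds with $\rho(u)$ replaced by $\rho(v)$. Writing $P_s(u,v)=\int_{\mathbb H^n}P_{s/2}(u,w)P_{s/2}(w,v)\,dw$ and applying the $\rho(u)$-form (with exponent $N$) to the first factor and the $\rho(v)$-form to the second, the two $\rho$-factors are independent of $w$ and factor out; since $\sqrt{s/2}\simeq\sqrt s$ they are comparable to $(1+\sqrt s/\rho(u))^{-N}$ and $(1+\sqrt s/\rho(v))^{-N}$. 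What remains inside the integral is the product of two Gaussian factors, which by the semigroup property and the Gaussian bound \eqref{heatkernel} is again dominated by $Cs^{-Q/2}\exp(-|v^{-1}u|^2/(As))$ (with a possibly larger $A$). The elementary inequality $(1+a)(1+b)\ge 1+a+b$ for $a,b\ge0$ then converts the resulting product $(1+\sqrt s/\rho(u))^{-N}(1+\sqrt s/\rho(v))^{-N}$ into the desired factor $(1+\sqrt s/\rho(u)+\sqrt s/\rho(v))^{-N}$, which is exactly the claim.

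It remains to prove the one-sided bound, for which I would distinguish two regimes according to the size of $\sqrt s$ relative to $\rho(u)$. When $\sqrt s\le\rho(u)$ the factor $(1+\sqrt s/\rho(u))^{-N}$ is bounded above and below by absolute constants, so the assertion reduces to the Gaussian bound \eqref{heat} that is already available. The whole difficulty lies in the complementary regime $\sqrt s>\rho(u)$, where genuine polynomial decay in $\sqrt s/\rho(u)$ must be manufactured out of the potential. Here I would use the Feynman--Kac representation
\begin{equation*}
P_s(u,v)=\int\exp\Big(-\int_0^s V(\omega_\tau)\,d\tau\Big)\,d\mathbb W^s_{u,v}(\omega),
\end{equation*}
where $\mathbb W^s_{u,v}$ is the bridge measure of the diffusion generated by $\Delta_{\mathbb H^n}$ running from $u$ to $v$ in time $s$, of total mass $H_s(v^{-1}u)$; since $V\ge0$, this at once recovers $P_s\le H_s$. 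Setting $m:=\lfloor s/\rho(u)^2\rfloor\ge 1$, I would split $[0,s]$ into $m$ subintervals of length $\simeq\rho(u)^2$ and use the Markov structure of the bridge to factor the Feynman--Kac functional across them.

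The crux --- and the step I expect to be the main obstacle --- is a \emph{single-scale gain}: on a time interval of length $\simeq\rho(u)^2$ the conditional expectation of $\exp(-\int V)$ is bounded by a fixed constant $\theta\in(0,1)$, reflecting the fact that on the critical scale the path accumulates a definite amount of potential. This is precisely where the hypotheses enter. By the definition \eqref{rho} of the critical radius one has $\rho(u)^{-(Q-2)}\int_{B(u,\rho(u))}V=1$, and the reverse H\"older property $V\in RH_s$ with $s\ge Q/2$ upgrades this averaged datum to the quantitative, scale-uniform control of $\int_{B(u,r)}V$ over all radii $r$ (the Heisenberg analogues of Shen's auxiliary estimates) that is needed to prove $\int_0^{\rho(u)^2}V(\omega_\tau)\,d\tau\gtrsim 1$ with probability bounded below, hence $\theta<1$. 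Converting the $RH_s$ and $\rho$ information into this probabilistic lower bound, using the Gaussian bound \eqref{heatkernel} and the doubling property \eqref{homonorm} in place of their Euclidean counterparts, is the technical heart of the matter.

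Finally, iterating the single-scale gain over the $m$ subintervals produces a factor $\theta^{\,m}$, and since exponential decay in $m$ dominates every power, $\theta^{\,m}\le C_N(1+\sqrt s/\rho(u))^{-N}$ for each $N$, supplying exactly the missing decay. The Gaussian spatial factor is retained throughout because $0\le P_\tau\le H_\tau$ and Gaussian bounds reproduce under the semigroup, while Lemma \ref{N0} (inequality \eqref{com}) is invoked to ensure that the single-scale gain persists on subintervals whose base points have drifted away from $u$, where $\rho$ is merely comparable to $\rho(u)$ rather than equal to it. Combining the two regimes yields the one-sided bound, and the symmetrization of the first paragraph then gives the lemma.
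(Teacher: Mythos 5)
The paper does not actually prove this lemma: it is quoted from Lin--Liu \cite[Lemma 7]{lin} (with \cite[Proposition 2]{dziu} as the Euclidean prototype), so there is no in-paper argument to compare yours against. Judged on its own, your outline has the right architecture --- the reduction to a one-sided bound in $\rho(u)$, the symmetrization via $P_s=\int P_{s/2}(u,w)P_{s/2}(w,v)\,dw$ together with $(1+a)(1+b)\ge 1+a+b$, and the dichotomy $\sqrt s\le\rho(u)$ versus $\sqrt s>\rho(u)$ are all sound and consistent with how the cited proofs proceed --- but the decisive step is asserted rather than proved, and in the form you state it, it fails. Your ``single-scale gain'' claims that on \emph{every} subinterval of length $\simeq\rho(u)^2$ the conditional Feynman--Kac expectation is at most a fixed $\theta<1$. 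That can only hold while the bridge stays in a region where the critical radius is comparable to $\rho(u)$. By \eqref{com} the ratio $\rho(w)/\rho(u)$ may grow like $\big(1+|w^{-1}u|/\rho(u)\big)^{N_0/(N_0+1)}$, which is unbounded; once the path drifts to a point $w$ with $\rho(w)\gg\rho(u)$, a time window of length $\rho(u)^2$ is far subcritical there and yields no gain at all (on such windows $P_\tau$ is close to $H_\tau$). Hence $\theta^{\,m}$ with $m=\lfloor s/\rho(u)^2\rfloor$ does not follow from the Markov factorization; one must trade the lost gain on drifted subintervals against the Gaussian cost $\exp(-D^2/(As))$ of drifting a distance $D$, exploiting that the exponent $N_0/(N_0+1)$ in \eqref{com} is strictly less than $1$. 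You invoke Lemma \ref{N0} here but describe $\rho$ as ``merely comparable'' to $\rho(u)$ after the drift, which is precisely what is not true.

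The second, equally essential gap is the gain at the critical scale itself. The normalization $\rho(u)^{-(Q-2)}\int_{B(u,\rho(u))}V\approx 1$ controls only the average of $V$ on the critical ball; to conclude that the bridge accumulates $\int_0^{\rho(u)^2}V(\omega_\tau)\,d\tau\gtrsim1$ with probability bounded below, one must exclude concentration of $V$ on small subsets, and that is exactly where the Fefferman--Phong-type inequality for $V\in RH_s$, $s\ge Q/2$, on $\mathbb H^n$ (Lu \cite{lu}) and occupation-time estimates for the sub-Laplacian bridge enter; equivalently, in the Duhamel version one needs a lower bound on $\int_0^{t}\!\int H_\tau(w^{-1}u)V(w)H_{t-\tau}(v^{-1}w)\,dw\,d\tau$ for $t\approx\rho(u)^2$. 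None of this is carried out, and it constitutes essentially the entire mathematical content of the lemma; the rest of your sketch is bookkeeping. So the proposal is a plausible roadmap in the spirit of \cite{lin} and \cite{dziu}, but not yet a proof.
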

This estimate of $P_s(u,v)$ is better than \eqref{heat}, which was given by Lin and Liu in \cite[Lemma 7]{lin}. In the setting of $\mathbb R^n$, this result can be found in \cite[Proposition 2]{dziu}.

Inspired by \eqref{frac2} and \eqref{frac3}, for given $\alpha\in(0,Q)$, the \emph{$\mathcal L$-fractional integral operator} or \emph{$\mathcal L$-Riesz potential} on the Heisenberg group is naturally defined by (see \cite{jiang} and \cite{jiang2})
\begin{equation}\label{riesz}
\begin{split}
\mathcal I_{\alpha}(f)(u)&:={\mathcal L}^{-{\alpha}/2}f(u)\\
&=\frac{1}{\Gamma(\alpha/2)}\int_0^{\infty}e^{-s\mathcal L}f(u)\,s^{\alpha/2-1}ds.
\end{split}
\end{equation}
Recall that in the setting of $\mathbb R^n$, this integral operator was first introduced by Dziuba\'{n}ski et al.\cite{dziu}, and studied extensively by many authors in \cite{bong,bong1,bong2,bui,pan,tang}. In this article we shall be interested in the behavior of the fractional integral operator $\mathcal I_{\alpha}$ associated with Schr\"odinger operator $\mathcal L$ on $\mathbb H^n$. First we are going to establish strong-type and weak-type estimates of the $\mathcal L$-fractional integral operator $\mathcal I_{\alpha}$ on the Lebesgue spaces. We now claim that the following inequality
\begin{equation}\label{claim}
|\mathcal I_{\alpha}f(u)|\leq C\int_{\mathbb H^n}|f(v)|\frac{1}{|v^{-1}u|^{Q-\alpha}}\,dv=C\big(|f|*|\cdot|^{\alpha-Q}\big)(u)
\end{equation}
holds for all $u\in\mathbb H^n$. Let us verify \eqref{claim}. To do so, let $\mathcal K_{\alpha}(u,v)$ denote the kernel of the $\mathcal L$-fractional integral operator $\mathcal I_{\alpha}$. Then we have
\begin{equation*}
\begin{split}
\int_{\mathbb H^n}\mathcal K_{\alpha}(u,v)f(v)\,dv&=\mathcal I_{\alpha}f(u)={\mathcal L}^{-{\alpha}/2}f(u)\\
&=\frac{1}{\Gamma(\alpha/2)}\int_0^{\infty}e^{-s\mathcal L}f(u)\,s^{\alpha/2-1}ds\\
&=\int_0^{\infty}\bigg[\frac{1}{\Gamma(\alpha/2)}\int_{\mathbb H^n}P_s(u,v)f(v)\,dv\bigg]s^{\alpha/2-1}ds\\
&=\int_{\mathbb H^n}\bigg[\frac{1}{\Gamma(\alpha/2)}\int_0^{\infty}P_s(u,v)\,s^{\alpha/2-1}ds\bigg]f(v)\,dv.
\end{split}
\end{equation*}
Hence, the kernel $\mathcal K_{\alpha}(u,v)$ can be written as
\begin{equation*}
\mathcal K_{\alpha}(u,v)=\frac{1}{\Gamma(\alpha/2)}\int_0^{\infty}P_s(u,v)\,s^{\alpha/2-1}ds.
\end{equation*}
Moreover, by using \eqref{heat}, we can deduce that
\begin{equation*}
\begin{split}
\big|\mathcal K_{\alpha}(u,v)\big|&\leq\frac{C}{\Gamma(\alpha/2)}\int_0^{\infty}\exp\bigg(-\frac{|v^{-1}u|^2}{As}\bigg)s^{\alpha/2-Q/2-1}ds\\
&=\frac{C}{\Gamma(\alpha/2)}\cdot\frac{1}{|v^{-1}u|^{Q-\alpha}}\int_0^{\infty}e^{-t}\,t^{(Q/2-\alpha/2)-1}dt\\
&=C\cdot\frac{\Gamma(Q/2-\alpha/2)}{\Gamma(\alpha/2)}\cdot\frac{1}{|v^{-1}u|^{Q-\alpha}},
\end{split}
\end{equation*}
where in the second step we have used a change of variables. Thus \eqref{claim} holds. According to Theorems 4.4 and 4.5 in \cite{xiao}, one can get the Hardy-Littlewood-Sobolev theorem for $\mathcal I_{\alpha}$ on $\mathbb H^n$.
\begin{thm}\label{strong}
Let $0<\alpha<Q$ and $1\leq p<Q/{\alpha}$. Define $1<q<\infty$ by the relation $1/q=1/p-{\alpha}/Q$. Then the following statements are valid:
\begin{enumerate}
  \item if $p>1$, then $\mathcal I_{\alpha}$ is bounded from $L^p(\mathbb H^n)$ into $L^q(\mathbb H^n);$
  \item if $p=1$, then $\mathcal I_{\alpha}$ is bounded from $L^1(\mathbb H^n)$ into $WL^q(\mathbb H^n)$.
\end{enumerate}
\end{thm}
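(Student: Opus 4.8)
The plan is to deduce everything from the pointwise domination \eqref{claim} together with the Hardy--Littlewood--Sobolev theorem for the free Riesz potential $I_\alpha$ on $\mathbb H^n$ recalled above from \cite{xiao}; since the analytic work (the kernel estimate leading to \eqref{claim}) has already been carried out, the argument is short.

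First I would rewrite \eqref{claim} in the form
\[
|\mathcal I_\alpha f(u)|\le C\big(|f|*|\cdot|^{\alpha-Q}\big)(u)=C\int_{\mathbb H^n}\frac{|f(v)|}{|v^{-1}u|^{Q-\alpha}}\,dv,\qquad u\in\mathbb H^n.
\]
The operator $g\mapsto g*|\cdot|^{\alpha-Q}$ on the right is precisely the fractional convolution integral on the Heisenberg group whose $L^p\to L^q$ and $L^1\to WL^q$ mapping properties, under $1\le p<Q/\alpha$ and $1/q=1/p-\alpha/Q$, constitute the Hardy--Littlewood--Sobolev theorem proved in \cite{xiao} (its Theorems 4.4 and 4.5); up to a normalizing constant this convolution is the free Riesz potential $I_\alpha=(-\Delta_{\mathbb H^n})^{-\alpha/2}$, exactly as in the Euclidean identity \eqref{frac}. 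Thus \eqref{claim} says that $\mathcal I_\alpha f$ is controlled pointwise by an operator whose boundedness we already have in hand.

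For statement (1), with $p>1$, I would take the $L^q(\mathbb H^n)$ norm in the displayed pointwise bound and invoke the strong-type estimate, obtaining
\[
\|\mathcal I_\alpha f\|_{L^q(\mathbb H^n)}\le C\big\||f|*|\cdot|^{\alpha-Q}\big\|_{L^q(\mathbb H^n)}\le C\|f\|_{L^p(\mathbb H^n)}.
\]
For statement (2), with $p=1$, I would first record that the weak-$L^q$ quasi-norm is monotone under pointwise domination: if $0\le h_1\le h_2$, then $\{u:h_1(u)>\lambda\}\subseteq\{u:h_2(u)>\lambda\}$ for every $\lambda>0$, whence $\|h_1\|_{WL^q}\le\|h_2\|_{WL^q}$. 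Applying this to \eqref{claim} and then the weak-type estimate yields
\[
\|\mathcal I_\alpha f\|_{WL^q(\mathbb H^n)}\le C\big\||f|*|\cdot|^{\alpha-Q}\big\|_{WL^q(\mathbb H^n)}\le C\|f\|_{L^1(\mathbb H^n)}.
\]

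There is no genuine obstacle in this argument; all the difficulty has been absorbed into the already-established estimate \eqref{claim}. The only points that require attention are bookkeeping in nature: confirming that the right-hand side of \eqref{claim} is indeed the operator governed by the Hardy--Littlewood--Sobolev theorem of \cite{xiao} (equivalently, that convolution with $|\cdot|^{\alpha-Q}$ agrees, up to a multiplicative constant, with the free Riesz potential $I_\alpha$), and, in the endpoint case $p=1$, invoking the monotonicity of the weak-$L^q$ quasi-norm so that the pointwise control in \eqref{claim} transfers to a weak-type norm estimate.
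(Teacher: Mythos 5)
Your proposal is correct and is essentially identical to what the paper does: the paper derives Theorem \ref{strong} precisely by establishing the pointwise bound \eqref{claim} via the Gaussian kernel estimate and then invoking Theorems 4.4 and 4.5 of \cite{xiao} for the convolution operator $f\mapsto |f|*|\cdot|^{\alpha-Q}$. Your explicit remark about the monotonicity of the weak-$L^q$ quasi-norm under pointwise domination is a detail the paper leaves implicit, but there is no substantive difference in the argument.
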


This article is organized as follows. In Section \ref{sec2}, we will give the definitions of Morrey space and weak Morrey space associated with Schr\"odinger operator on $\mathbb H^n$ and state our main results:Theorems \ref{mainthm:1}, \ref{mainthm:2}, \ref{mainthm:3} and Corollary \ref{mainthm:4}. Section \ref{sec3} is devoted to establishing some estimates for the kernel of the $\mathcal L$-fractional integral operator and proving Theorems \ref{mainthm:1} and \ref{mainthm:2}. In Section \ref{sec4}, we will study certain extreme cases and give the proof of Theorem \ref{mainthm:3}.

Throughout this article, we denote by $C$ a positive constant which is independent of the main parameters, but it may vary from line to line. We also use $C_{\alpha,\beta,\dots}$ to denote a positive constant depending on the parameters $\alpha,\beta,\dots$. The symbol $f\lesssim g$ means that $f\leq Cg$. If $f\lesssim g$ and $g\lesssim f$, then we write $f\approx g$ to denote the equivalence of $f$ and $g$. For any $p\in(1,\infty)$, the notation $p'$ denotes its conjugate number, namely, $1/p+1/{p'}=1$.

\section{Definitions and Main results}\label{sec2}
In this section, we introduce some kinds of Morrey spaces associated with the Schr\"odinger operator $\mathcal L$ on $\mathbb H^n$, and then give our main results.
\begin{defin}
Let $\rho$ be the auxiliary function determined by $V\in RH_s$ with $s\geq Q/2$. Let $1\leq p<\infty$ and $0\leq\kappa<1$. For any given $0<\theta<\infty$, the Morrey space $L^{p,\kappa}_{\rho,\theta}(\mathbb H^n)$ is defined to be the set of all $p$-locally integrable functions $f$ on $\mathbb H^n$ such that
\begin{equation}\label{morrey1}
\bigg(\frac{1}{|B|^{\kappa}}\int_B|f(u)|^p\,du\bigg)^{1/p}
\leq C\cdot\left[1+\frac{r}{\rho(u_0)}\right]^{\theta}
\end{equation}
holds for every ball $B=B(u_0,r)$ in $\mathbb H^n$, $u_0$ and $r$ denote the center and radius of $B$, respectively. A norm for $f\in L^{p,\kappa}_{\rho,\theta}(\mathbb H^n)$, denoted by $\|f\|_{L^{p,\kappa}_{\rho,\theta}(\mathbb H^n)}$, is given by the infimum of the constants in \eqref{morrey1}, or equivalently,
\begin{equation*}
\big\|f\big\|_{L^{p,\kappa}_{\rho,\theta}(\mathbb H^n)}:=\sup_{B(u_0,r)}\left[1+\frac{r}{\rho(u_0)}\right]^{-\theta}
\bigg(\frac{1}{|B|^{\kappa}}\int_B\big|f(u)\big|^p\,du\bigg)^{1/p}
<\infty,
\end{equation*}
where the supremum is taken over all balls $B=B(u_0,r)$ in $\mathbb H^n$. Define
\begin{equation*}
L^{p,\kappa}_{\rho,\infty}(\mathbb H^n):=\bigcup_{\theta>0}L^{p,\kappa}_{\rho,\theta}(\mathbb H^n).
\end{equation*}
\end{defin}

\begin{defin}
Let $\rho$ be the auxiliary function determined by $V\in RH_s$ with $s\geq Q/2$. Let $1\leq p<\infty$ and $0\leq\kappa<1$. For any given $0<\theta<\infty$, the weak Morrey space $WL^{p,\kappa}_{\rho,\theta}(\mathbb H^n)$ is defined to be the set of all measurable functions $f$ on $\mathbb H^n$ such that
\begin{equation*}
\frac{1}{|B|^{\kappa/p}}\sup_{\lambda>0}\lambda\cdot\big|\big\{u\in B:|f(u)|>\lambda\big\}\big|^{1/p}
\leq C\cdot\left[1+\frac{r}{\rho(u_0)}\right]^{\theta}
\end{equation*}
holds for every ball $B=B(u_0,r)$ in $\mathbb H^n$, or equivalently,
\begin{equation*}
\big\|f\big\|_{WL^{p,\kappa}_{\rho,\theta}(\mathbb H^n)}:=\sup_{B(u_0,r)}\left[1+\frac{r}{\rho(u_0)}\right]^{-\theta}\frac{1}{|B|^{\kappa/p}}
\sup_{\lambda>0}\lambda\cdot\big|\big\{u\in B:|f(u)|>\lambda\big\}\big|^{1/p}<\infty.
\end{equation*}
Correspondingly, we define
\begin{equation*}
WL^{p,\kappa}_{\rho,\infty}(\mathbb H^n):=\bigcup_{\theta>0}WL^{p,\kappa}_{\rho,\theta}(\mathbb H^n).
\end{equation*}
\end{defin}
\begin{rem}
(i) Obviously, if we take $\theta=0$ or $V\equiv0$, then this Morrey space $L^{p,\kappa}_{\rho,\theta}(\mathbb H^n)$ (or weak Morrey space $WL^{p,\kappa}_{\rho,\theta}(\mathbb H^n)$) is just the Morrey space $L^{p,\kappa}(\mathbb H^n)$ (or weak Morrey space $WL^{p,\kappa}(\mathbb H^n)$), which was defined and studied by Guliyev et al. \cite{guliyev}.

(ii) According to the above definitions, one has
\begin{align}
L^{p,\kappa}(\mathbb H^n)\subset L^{p,\kappa}_{\rho,\theta_1}(\mathbb H^n)\subset L^{p,\kappa}_{\rho,\theta_2}(\mathbb H^n);& \label{22}\\
WL^{p,\kappa}(\mathbb H^n)\subset WL^{p,\kappa}_{\rho,\theta_1}(\mathbb H^n)\subset WL^{p,\kappa}_{\rho,\theta_2}(\mathbb H^n),& \label{23}
\end{align}
for $0<\theta_1<\theta_2<\infty$. Hence,
\begin{equation*}
L^{p,\kappa}(\mathbb H^n)\subset L^{p,\kappa}_{\rho,\infty}(\mathbb H^n)\quad \mathrm{and} \quad WL^{p,\kappa}(\mathbb H^n)\subset WL^{p,\kappa}_{\rho,\infty}(\mathbb H^n)
\end{equation*}
for all $(p,\kappa)\in[1,\infty)\times[0,1)$.

(iii) We define a norm on the space $L^{p,\kappa}_{\rho,\infty}(\mathbb H^n)$, which makes it into a Banach space. In view of \eqref{22}, for any given $f\in L^{p,\kappa}_{\rho,\infty}(\mathbb H^n)$, let
\begin{equation*}
\theta^*:=\inf\big\{\theta>0:f\in L^{p,\kappa}_{\rho,\theta}(\mathbb H^n)\big\}.
\end{equation*}
Now define the functional $\|\cdot\|_{\star}$ by
\begin{equation}\label{newnorm}
\|f\|_{\star}=\big\|f\big\|_{L^{p,\kappa}_{\rho,\infty}(\mathbb H^n)}:=\big\|f\big\|_{L^{p,\kappa}_{\rho,\theta^*}(\mathbb H^n)}.
\end{equation}
It is easy to check that this functional $\|\cdot\|_{\star}$ satisfies the axioms of a norm; i.e., that for $f,g\in L^{p,\kappa}_{\rho,\infty}(\mathbb H^n)$ and $\lambda\in\mathbb R$,
\begin{itemize}
  \item it is positive definite: $\|f\|_{\star}\geq0$, and $\|f\|_{\star}=0\Leftrightarrow f=0$;
  \item it is multiplicative: $\|\lambda f\|_{\star}=|\lambda|\|f\|_{\star}$;
  \item it satisfies the triangle inequality: $\|f+g\|_{\star}\leq\|f\|_{\star}+\|g\|_{\star}$.
\end{itemize}
(iv) In view of \eqref{23}, for any given $f\in WL^{p,\kappa}_{\rho,\infty}(\mathbb H^n)$, let
\begin{equation*}
\theta^{**}:=\inf\big\{\theta>0:f\in WL^{p,\kappa}_{\rho,\theta}(\mathbb H^n)\big\}.
\end{equation*}
Similarly, we define the functional $\|\cdot\|_{\star\star}$ by
\begin{equation*}
\|f\|_{\star\star}=\big\|f\big\|_{WL^{p,\kappa}_{\rho,\infty}(\mathbb H^n)}:=\big\|f\big\|_{WL^{p,\kappa}_{\rho,\theta^{**}}(\mathbb H^n)}.
\end{equation*}
We can easily show that this functional $\|\cdot\|_{\star\star}$ satisfies the axioms of a (quasi)norm, and $WL^{p,\kappa}_{\rho,\infty}(\mathbb H^n)$ is a (quasi)normed linear space.
\end{rem}

Let $\mathcal I_{\alpha}$ be the operator as in \eqref{riesz}. Since Morrey space $L^{p,\kappa}_{\rho,\theta}(\mathbb H^n)$ (or weak Morrey space $WL^{p,\kappa}_{\rho,\theta}(\mathbb H^n)$) is a natural generalization of Lebesgue (or weak Lebesgue) space on $\mathbb H^n$ (when $\kappa=\theta=0$), it is accordingly natural to ask if the operator $\mathcal I_{\alpha}$ is also bounded in Morrey spaces. Our principal goal in this paper is to establish the Adams (Morrey-Sobolev) inequality on $\mathbb H^n$. We now state our main results as follows.

\begin{thm}\label{mainthm:1}
Let $0<\alpha<Q$, $1<p<Q/{\alpha}$, $0<\kappa<1-{(\alpha p)}/Q$ and $1/q=1/p-{\alpha}/{Q(1-\kappa)}$. If $V\in RH_s$ with $s\geq Q/2$, then the $\mathcal L$-fractional integral operator $\mathcal I_{\alpha}$ is bounded from $L^{p,\kappa}_{\rho,\infty}(\mathbb H^n)$ into $L^{q,\kappa}_{\rho,\infty}(\mathbb H^n)$.
\end{thm}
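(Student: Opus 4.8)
The plan is to reduce everything to two kernel estimates and then run an Adams--Hedberg argument ball by ball. The starting point is the pointwise domination \eqref{claim}, namely $|\mathcal I_\alpha f(u)|\le C(|f|\ast|\cdot|^{\alpha-Q})(u)$, together with an \emph{improved} kernel bound that I would extract from Lemma \ref{ker1}: for every $N\in\mathbb N$ there is $C_N>0$ with
\[
\mathcal K_\alpha(u,v)\le\frac{C_N}{[1+|v^{-1}u|/\rho(u)]^{N}}\cdot\frac{1}{|v^{-1}u|^{Q-\alpha}}.
\]
This comes from inserting the factor $[1+\sqrt{s}/\rho(u)]^{-N}$ of Lemma \ref{ker1} into the subordination integral for $\mathcal K_\alpha$ and splitting the $s$-integral at $s\approx|v^{-1}u|^{2}$; the Gaussian factor concentrates the mass near $s\approx|v^{-1}u|^{2}$, where the inserted factor becomes $\approx(\rho(u)/|v^{-1}u|)^{N}$ once $|v^{-1}u|\gtrsim\rho(u)$. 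This decay is exactly what will tame the long-range tail of $\mathcal I_\alpha$. I expect the \emph{main obstacle} to be precisely the interaction of this tail with the polynomial growth permitted in $L^{p,\kappa}_{\rho,\theta}(\mathbb H^n)$: a naive Riesz estimate makes the far contribution diverge once $\theta$ is large, and the whole argument hinges on balancing $N$ against $\theta$. A second point to note is that Theorem \ref{strong} cannot be applied directly to the local piece, since its target exponent $\tilde q$ (with $1/\tilde q=1/p-\alpha/Q$) is strictly smaller than the desired $q$; this forces the use of the maximal-function version of the Adams inequality rather than plain Hardy--Littlewood--Sobolev.

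Fix $f\in L^{p,\kappa}_{\rho,\theta}(\mathbb H^n)$ and a ball $B=B(u_0,r)$, write $f=f_1+f_2$ with $f_1=f\chi_{2B}$ and $f_2=f\chi_{(2B)^{\complement}}$, and set $\beta:=Q(1-\kappa)/p-\alpha$, which is positive exactly because $\kappa<1-(\alpha p)/Q$. The bookkeeping is driven by the three identities $\alpha+\beta=Q(1-\kappa)/p$, $\;Q(1-\kappa)/q=\beta$ and $1-p/q=\alpha p/(Q(1-\kappa))$, all immediate from $1/q=1/p-\alpha/(Q(1-\kappa))$. For the near part I would run the Hedberg splitting of $(|f_1|\ast|\cdot|^{\alpha-Q})(u)$ at a free radius $\delta$: summing over dyadic annuli bounds the inner portion by $C\delta^{\alpha}Mf_1(u)$, where $M$ is the Hardy--Littlewood maximal operator on $\mathbb H^n$, while Hölder's inequality and the defining inequality \eqref{morrey1} on the annuli bound the outer portion by $C\|f\|_{L^{p,\kappa}_{\rho,\theta}}\,\delta^{-\beta}[1+r/\rho(u_0)]^{\theta''}$. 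Here the annular radii are all $\lesssim r$ because $\operatorname{supp}f_1\subset2B$, so the $\rho$-factors stay bounded, and Lemma \ref{N0} together with \eqref{com2} re-centers $\rho(u)$ at $\rho(u_0)$ for $u\in B$. Optimizing in $\delta$ (using $\alpha+\beta=Q(1-\kappa)/p$) yields the pointwise Adams inequality $(|f_1|\ast|\cdot|^{\alpha-Q})(u)\lesssim\big(\|f\|_{L^{p,\kappa}_{\rho,\theta}}[1+r/\rho(u_0)]^{\theta''}\big)^{1-p/q}(Mf_1(u))^{p/q}$.

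Raising this to the $q$-th power and integrating over $B$, I would then invoke the $L^{p}(\mathbb H^n)$-boundedness of $M$ applied to $f_1$, together with the estimate $\|f_1\|_{L^{p}}^{p}=\int_{2B}|f|^{p}\le\|f\|_{L^{p,\kappa}_{\rho,\theta}}^{p}|2B|^{\kappa}[1+2r/\rho(u_0)]^{p\theta}$ coming from \eqref{morrey1} and \eqref{homonorm}. After dividing by $|B|^{\kappa}$ and taking the $q$-th root, the powers of $r$ cancel and one is left with a bound of the form $\|f\|_{L^{p,\kappa}_{\rho,\theta}}[1+r/\rho(u_0)]^{\theta_1}$ for the near part. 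For the far part $f_2$ I would use the improved kernel bound with $N$ chosen large depending on $\theta$: since $|v^{-1}u|\approx|v^{-1}u_0|$ for $u\in B$ and $v\notin2B$, the quantity $\mathcal I_\alpha f_2(u)$ is essentially constant on $B$ and, decomposing $(2B)^{\complement}$ into annuli $2^{k+1}B\setminus2^{k}B$ and applying Hölder plus \eqref{morrey1}, is dominated by $C\,\|f\|_{L^{p,\kappa}_{\rho,\theta}}\sum_{k\ge1}(2^{k}r)^{-\beta}[1+2^{k}r/\rho(u_0)]^{\theta-N'}$, where $N'$ grows with $N$ and Lemma \ref{N0} again transfers the $\rho$-factors to $u_0$.

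Choosing $N'>\theta$ makes this geometric-type sum convergent; splitting it at $2^{k}r\approx\rho(u_0)$ and summing the two pieces gives $\lesssim\|f\|_{L^{p,\kappa}_{\rho,\theta}}\big(r^{-\beta}+\rho(u_0)^{-\beta}\big)$. Multiplying by $(|B|/|B|^{\kappa})^{1/q}=C\,r^{\beta}$ (here the identity $Q(1-\kappa)/q=\beta$ is what makes the powers match) collapses the far-part contribution to $\|f\|_{L^{p,\kappa}_{\rho,\theta}}[1+r/\rho(u_0)]^{\beta}$. Adding the near and far estimates and taking the supremum over all balls $B=B(u_0,r)$ shows $\mathcal I_\alpha f\in L^{q,\kappa}_{\rho,\theta^{*}}(\mathbb H^n)\subset L^{q,\kappa}_{\rho,\infty}(\mathbb H^n)$ with $\theta^{*}=\max\{\theta_1,\beta\}$ and $\|\mathcal I_\alpha f\|_{L^{q,\kappa}_{\rho,\infty}}\lesssim\|f\|_{L^{p,\kappa}_{\rho,\infty}}$, which is the claim. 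The two places where the argument is delicate, and where I would spend the most care, are extracting the sharp decay factor $[1+|v^{-1}u|/\rho(u)]^{-N}$ from Lemma \ref{ker1} and then choosing $N$ large relative to $\theta$, and the uniform comparison of $\rho(u)$ with $\rho(u_0)$ across each annulus via \eqref{com}--\eqref{com2}, which is what lets all the critical-radius factors reassemble into a single power of $[1+r/\rho(u_0)]$.
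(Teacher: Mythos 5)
Your argument is correct, and it reaches the theorem by a genuinely different organization of the same ingredients. You and the paper both start from the kernel bound $|\mathcal K_{\alpha}(u,v)|\lesssim[1+|v^{-1}u|/\rho(u)]^{-N}|v^{-1}u|^{\alpha-Q}$ (the paper's Lemma \ref{kernel}, proved exactly as you sketch, by splitting the subordination integral at $s\approx|v^{-1}u|^{2}$), and both rely on the same exponent arithmetic $\beta=Q(1-\kappa)/p-\alpha=Q(1-\kappa)/q$. The difference is in where the ball decomposition sits. The paper first proves a \emph{global} pointwise Adams inequality, $|\mathcal I_{\alpha}f(u)|\lesssim[\mathcal M_{\rho,N}f(u)]^{p/q}\|f\|_{L^{p,\kappa}_{\rho,\theta}}^{1-p/q}$ (Lemma \ref{keylem}), in which the near part of the Hedberg splitting is absorbed into the \emph{localized} maximal operator $\mathcal M_{\rho,N}$ carrying the damping factor $[1+r/\rho(u)]^{-N}$, and the far part is handled by H\"older and \eqref{morrey1} exactly as in your $f_2$ estimate but with $\sigma$ free rather than tied to a ball; it then proves separately (Theorem \ref{maxthm:1}, via the $f=f_{1}+f_{2}$ split you perform at the outset) that $\mathcal M_{\rho,N}$ is bounded on $L^{p,\kappa}_{\rho,\theta}(\mathbb H^n)$ when $N\geq\theta(N_{0}+1)$. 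You instead split $f=f_{1}+f_{2}$ relative to the fixed ball first, run Hedberg on $f_{1}$ with the ordinary Hardy--Littlewood maximal operator and invoke only its $L^{p}$-boundedness, and dispose of $f_{2}$ directly through the kernel decay with $N>\theta$ and \eqref{com2}. Your route is more self-contained for this theorem (no new maximal operator or its Morrey theory is needed), at the cost of redoing the tail estimate inside the proof; the paper's route packages the tail once into Lemma \ref{keylem} and Theorems \ref{maxthm:1}--\ref{maxthm:2}, which it then reuses verbatim for the weak-type endpoint $p=1$ (Theorem \ref{mainthm:2}), where your reliance on the strong $(p,p)$ bound for $M$ would have to be replaced by a weak-$(1,1)$ argument. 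Two cosmetic points: in the outer portion of your near-part estimate one should note that annuli with $2^{j}\delta\gtrsim r$ do not meet $\operatorname{supp}f_{1}$, which is what keeps the $\rho$-factors at scale $r$; and the exponent $\beta$ you record on $[1+r/\rho(u_0)]$ for the far part is an artifact of your split at $2^{k}r\approx\rho(u_0)$ (the crude bound $\sum_{k}(2^{k}r)^{-\beta}\lesssim r^{-\beta}$ already suffices), but since the target is the union $L^{q,\kappa}_{\rho,\infty}=\bigcup_{\theta>0}L^{q,\kappa}_{\rho,\theta}$, any finite exponent is acceptable.
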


\begin{thm}\label{mainthm:2}
Let $0<\alpha<Q$, $p=1$, $0<\kappa<1-\alpha/Q$ and $1/q=1-{\alpha}/{Q(1-\kappa)}$. If $V\in RH_s$ with $s\geq Q/2$, then the $\mathcal L$-fractional integral operator $\mathcal I_{\alpha}$ is bounded from $L^{1,\kappa}_{\rho,\infty}(\mathbb H^n)$ into $WL^{q,\kappa}_{\rho,\infty}(\mathbb H^n)$.
\end{thm}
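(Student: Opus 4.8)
The plan is to reduce the weak-type Adams inequality to a localized Hedberg estimate together with the weak $(1,1)$ bound for the Hardy--Littlewood maximal operator $M$, using a refined kernel estimate to absorb the growth coming from the weight $[1+r/\rho(u_0)]^{\theta}$. First I would upgrade the pointwise kernel bound derived in \eqref{claim}: starting from Lemma~\ref{ker1} and repeating the computation that produced \eqref{claim}, but now retaining the factor $[1+\sqrt{s}/\rho(u)]^{-N}$, I would show that for every $N\in\mathbb N$ there is $C_N$ with
\[
|\mathcal K_\alpha(u,v)|\le C_N\,\frac{1}{|v^{-1}u|^{Q-\alpha}}\Big[1+\frac{|v^{-1}u|}{\rho(u)}\Big]^{-N},\qquad u\neq v .
\]
Since $f\in L^{1,\kappa}_{\rho,\infty}(\mathbb H^n)$ lies in $L^{1,\kappa}_{\rho,\theta}(\mathbb H^n)$ for some finite $\theta$, I would fix such a $\theta$ and later choose $N>\theta$.

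Then I would fix a ball $B=B(u_0,r)$ and split $f=f_1+f_2$ with $f_1=f\chi_{2B}$ and $f_2=f\chi_{(2B)^{\complement}}$, so that it suffices to bound the weak-$L^q(B)$ quasinorm of $\mathcal I_\alpha f_1$ and of $\mathcal I_\alpha f_2$ separately. For the \emph{near} part $f_1\in L^1(\mathbb H^n)$ I would run the Adams--Hedberg splitting of $I_\alpha(|f_1|)$ at a radius $\delta$ (using $|\mathcal I_\alpha f_1|\lesssim I_\alpha(|f_1|)$ from \eqref{claim}): the inner integral is $\lesssim\delta^{\alpha}Mf_1(u)$, while in the outer integral all relevant balls have radius $\lesssim r$, so by Lemma~\ref{N0} the weight is bounded by the constant $[1+r/\rho(u_0)]^{c\theta}$ and the Morrey condition, applied with exponent $\kappa$ (which is exactly what produces the Adams exponent $q$), gives $\lesssim\|f\|_{L^{1,\kappa}_{\rho,\theta}}[1+r/\rho(u_0)]^{c\theta}\delta^{\alpha-Q(1-\kappa)}$; the geometric sum converges precisely because $\kappa<1-\alpha/Q$. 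Optimizing in $\delta$ yields $|\mathcal I_\alpha f_1(u)|\lesssim A^{1-1/q}(Mf_1(u))^{1/q}$ with $A=\|f\|_{L^{1,\kappa}_{\rho,\theta}}[1+r/\rho(u_0)]^{c\theta}$, and then the weak $(1,1)$ estimate for $M$ together with $\|f_1\|_{L^1}\le\|f\|_{L^{1,\kappa}_{\rho,\theta}}|2B|^{\kappa}[1+2r/\rho(u_0)]^{\theta}$ gives $\frac{1}{|B|^{\kappa/q}}\|\mathcal I_\alpha f_1\|_{WL^q(B)}\lesssim\|f\|_{L^{1,\kappa}_{\rho,\theta}}[1+r/\rho(u_0)]^{\theta'}$. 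The key point is that $Mf_1$ is the maximal function of an $L^1$ function, hence finite a.e., so no divergence occurs.

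For the \emph{far} part $f_2$ I would argue pointwise for $u\in B$, where $|v^{-1}u|\approx|v^{-1}u_0|\gtrsim r$ whenever $v\notin 2B$. Decomposing $(2B)^{\complement}$ into dyadic annuli $B(u_0,2^{k+1}r)\setminus B(u_0,2^k r)$, applying the refined kernel estimate, comparing $\rho(u)$ with $\rho(u_0)$ through \eqref{com2}, and invoking the Morrey condition on each annulus, I would obtain
\[
|\mathcal I_\alpha f_2(u)|\lesssim\|f\|_{L^{1,\kappa}_{\rho,\theta}}\,[1+r/\rho(u_0)]^{\theta''}\,r^{\alpha-Q(1-\kappa)}\sum_{k\ge1}2^{k(\alpha-Q(1-\kappa))}\Big[1+\frac{2^k r}{\rho(u_0)}\Big]^{\theta-N}.
\]
Choosing $N>\theta$ makes each bracketed factor $\le 1$, so the series converges (again using $\kappa<1-\alpha/Q$) and the right-hand side is a constant $K$ independent of $u\in B$. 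Hence $\{u\in B:|\mathcal I_\alpha f_2(u)|>\lambda\}$ is empty for $\lambda\gtrsim K$ and has measure $\le|B|$ otherwise, so $\lambda|\{\cdots\}|^{1/q}\lesssim K|B|^{1/q}$; the algebraic identity $r^{\alpha-Q(1-\kappa)}|B|^{1/q}\approx|B|^{\kappa/q}$, which is equivalent to $1/q=1-\alpha/(Q(1-\kappa))$, then delivers the matching bound for $f_2$. Combining the two contributions shows $\mathcal I_\alpha f\in WL^{q,\kappa}_{\rho,\theta'}(\mathbb H^n)\subset WL^{q,\kappa}_{\rho,\infty}(\mathbb H^n)$ with the required norm estimate.

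I expect the main obstacle to be the far part: without the refined, $\rho$-decaying kernel estimate the dyadic series there diverges for large $\theta$ (equivalently, the plain maximal function of an $L^{1,\kappa}_{\rho,\theta}$ function need not be finite), so the whole argument hinges on trading the Morrey growth factor $[1+2^kr/\rho(u_0)]^{\theta}$ against the kernel decay $[1+2^kr/\rho(u_0)]^{-N}$ by taking $N$ larger than the finite $\theta$ attached to $f$. Keeping careful track, via Lemma~\ref{N0}, of how $\rho(u)$ and $\rho(u_0)$ are related so that all $u$-dependence in the far estimate collapses to a single power of $[1+r/\rho(u_0)]$ is the other place requiring care; the remaining exponent bookkeeping is routine.
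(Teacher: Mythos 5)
Your argument is correct, and all the exponent bookkeeping checks out (in particular the identity $r^{\alpha-Q(1-\kappa)}|B|^{1/q}\approx|B|^{\kappa/q}$ is exactly the hypothesis $1/q=1-\alpha/{Q(1-\kappa)}$, and the convergence of both dyadic sums does rest on $\kappa<1-\alpha/Q$ as you say). But it is organized differently from the paper. The paper never splits $f$ relative to a ball before applying Hedberg: it first proves a \emph{global} pointwise inequality (Lemma \ref{keylem}) of the form $|\mathcal I_{\alpha}f(u)|\lesssim[\mathcal M_{\rho,N}f(u)]^{1/q}\|f\|_{L^{1,\kappa}_{\rho,\theta}}^{1-1/q}$, where $\mathcal M_{\rho,N}$ is the localized maximal operator carrying the damping factor $[1+r/\rho(u)]^{-N}$, the far range being controlled directly by the Morrey condition on balls centered at $u$; it then separately proves that $\mathcal M_{\rho,N}$ maps $L^{1,\kappa}_{\rho,\theta}$ into $WL^{1,\kappa}_{\rho,\theta}$ (Theorem \ref{maxthm:2}, where the $f_1+f_2$ splitting finally appears), and composes the two. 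Your route performs the $f\chi_{2B}+f\chi_{(2B)^{\complement}}$ decomposition first, runs Hedberg only on the near piece against the ordinary Hardy--Littlewood maximal function (so only the classical weak $(1,1)$ bound is needed), and disposes of the far piece by a uniform $L^{\infty}$ bound on $B$ obtained from the refined kernel estimate, \eqref{com2}, and the choice $N>\theta$. What you gain is a more elementary, self-contained proof that avoids introducing $\mathcal M_{\rho,N}$ and its Morrey mapping properties; what the paper's factorization buys is a reusable pointwise lemma and the boundedness of $\mathcal M_{\rho,N}$ on these Morrey spaces as results of independent interest, which it also exploits verbatim for the strong-type case $p>1$. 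Both proofs hinge on the same two ingredients you correctly isolate: the $\rho$-decaying kernel bound of Lemma \ref{kernel} and the trade-off between the Morrey growth $[1+2^{k}r/\rho(u_0)]^{\theta}$ and the kernel decay $[1+2^{k}r/\rho(u_0)]^{-N}$ governed by Lemma \ref{N0}.
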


The second motivation of this paper is to treat the extreme cases $1>\kappa\geq 1-{(\alpha p)}/Q$. Before stating our next results, let us give some notation and definitions below. For given $0<\theta<\infty$, the space $\mathrm{BMO}_{\rho,\theta}(\mathbb H^n)$ is defined to be the set of all locally integrable functions $f$ on $\mathbb H^n$ satisfying
\begin{equation}\label{BM}
\frac{1}{|B(u_0,r)|}\int_{B(u_0,r)}\big|f(u)-f_{B(u_0,r)}\big|\,du
\leq C\cdot\left[1+\frac{r}{\rho(u_0)}\right]^{\theta},
\end{equation}
for all $u_0\in\mathbb H^n$ and $r>0$, $f_{B(u_0,r)}$ denotes the mean value of $f$ on $B(u_0,r)$, that is,
\begin{equation*}
f_{B(u_0,r)}:=\frac{1}{|B(u_0,r)|}\int_{B(u_0,r)}f(v)\,dv.
\end{equation*}
A norm for $f\in\mathrm{BMO}_{\rho,\theta}(\mathbb H^n)$, denoted by $\|f\|_{\mathrm{BMO}_{\rho,\theta}}$, is given by the infimum of the constants satisfying \eqref{BM}, or equivalently,
\begin{equation*}
\|f\|_{\mathrm{BMO}_{\rho,\theta}}
:=\sup_{B(u_0,r)}\left[1+\frac{r}{\rho(u_0)}\right]^{-\theta}\bigg(\frac{1}{|B(u_0,r)|}\int_{B(u_0,r)}\big|f(u)-f_{B(u_0,r)}\big|\,du\bigg),
\end{equation*}
where the supremum is taken over all balls $B(u_0,r)$ with $u_0\in\mathbb H^n$ and $r\in(0,\infty)$. If we identify functions that differ by a constant, then $\mathrm{BMO}_{\rho,\theta}(\mathbb H^n)$ becomes a Banach space with the norm $\|\cdot\|_{\mathrm{BMO}_{\rho,\theta}}$. We introduce a new space $\mathrm{BMO}_{\rho,\infty}(\mathbb H^n)$ defined by
\begin{equation*}
\mathrm{BMO}_{\rho,\infty}(\mathbb H^n):=\bigcup_{\theta>0}\mathrm{BMO}_{\rho,\theta}(\mathbb H^n).
\end{equation*}
Recall that in the setting of $\mathbb R^n$, the space $\mathrm{BMO}_{\rho,\theta}(\mathbb R^n)$ was first introduced by Bongioanni et al. \cite{bong3} (see also \cite{bong4}).

Moreover, for any given $\beta\in[0,1]$, we introduce the Campanato space on $\mathbb H^n$, with exponent $\beta$.
\begin{equation*}
\mathcal{C}^{\beta}_{\rho,\infty}(\mathbb H^n):=\bigcup_{\theta>0}\mathcal{C}^{\beta}_{\rho,\theta}(\mathbb H^n),
\end{equation*}
where for fixed $\theta\in(0,\infty)$ the space $\mathcal{C}^{\beta}_{\rho,\theta}(\mathbb H^n)$ is defined to be the set of all locally integrable functions $f$ satisfying
\begin{equation}\label{hconti}
\frac{1}{|B(u_0,r)|^{1+\beta/Q}}\int_{B(u_0,r)}\big|f(u)-f_{B(u_0,r)}\big|\,du
\leq C\cdot\left[1+\frac{r}{\rho(u_0)}\right]^{\theta},
\end{equation}
for all $u_0\in\mathbb H^n$ and $r\in(0,\infty)$. The smallest bound $C$ for which \eqref{hconti} is satisfied is then taken to be the norm of $f$ in this space and is denoted by $\|f\|_{\mathcal{C}^{\beta}_{\rho,\theta}}$. When $\theta=0$ or $V\equiv0$, $\mathrm{BMO}_{\rho,\theta}(\mathbb H^n)$ and $\mathcal{C}^{\beta}_{\rho,\theta}(\mathbb H^n)$ will be simply written as $\mathrm{BMO}(\mathbb H^n)$ (see, for example, \cite{folland3,krantz}) and $\mathcal{C}^{\beta}(\mathbb H^n)$, respectively.
\begin{rem}
(i) When $\beta=0$, this space $\mathcal{C}^{\beta}_{\rho,\theta}(\mathbb H^n)$ reduces to the space $\mathrm{BMO}_{\rho,\theta}(\mathbb H^n)$, and $\mathcal{C}^{\beta}_{\rho,\infty}(\mathbb H^n)$ reduces to the space $\mathrm{BMO}_{\rho,\infty}(\mathbb H^n)$ mentioned above.

(ii) We can also define a norm on the space $\mathcal{C}^{\beta}_{\rho,\infty}(\mathbb H^n)$ ($0\leq\beta\leq1$) by the same manner as in \eqref{newnorm}. The class $\mathcal{C}^{\beta}_{\rho,\infty}(\mathbb H^n)$ can be shown to be a Banach space under this norm.
\end{rem}

For the extreme cases $1>\kappa\geq 1-{(\alpha p)}/Q$ with $Q/{\alpha}>p\geq1$, we will prove the following result.
\begin{thm}\label{mainthm:3}
Let $0<\alpha<Q$, $1\leq p<Q/{\alpha}$ and $1-{(\alpha p)}/Q\leq\kappa<1$. If $V\in RH_s$ with $s\geq Q/2$, then the $\mathcal L$-fractional integral operator $\mathcal I_{\alpha}$ is bounded from $L^{p,\kappa}_{\rho,\infty}(\mathbb H^n)$ into $\mathcal{C}^{\beta}_{\rho,\infty}(\mathbb H^n)$ with $\beta/Q=\alpha/Q-{(1-\kappa)}/p$ and $\beta$ sufficiently small. To be more precise, $\beta<\delta\leq1$ and $\delta$ is given as in Lemma $\ref{kernel2}$ below.
\end{thm}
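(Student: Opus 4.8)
The plan is to reduce everything to the pointwise kernel bound \eqref{claim} together with a Hölder-type smoothness estimate for the kernel $\mathcal K_\alpha$, which is the content of Lemma \ref{kernel2}: for $|v^{-1}u_0|\geq 2|u_0^{-1}u|$ and every $N>0$ one has
\begin{equation*}
|\mathcal K_\alpha(u,v)-\mathcal K_\alpha(u_0,v)|\leq \frac{C_N\,|u_0^{-1}u|^{\delta}}{|v^{-1}u_0|^{Q-\alpha+\delta}}\Big[1+\frac{|v^{-1}u_0|}{\rho(u_0)}\Big]^{-N},
\end{equation*}
with $0<\delta\leq 1$ the exponent furnished by that lemma. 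Fix $f\in L^{p,\kappa}_{\rho,\infty}(\mathbb H^n)$, so that $f\in L^{p,\kappa}_{\rho,\theta}(\mathbb H^n)$ for some finite $\theta$, and fix a ball $B=B(u_0,r)$. Split $f=f_1+f_2$ with $f_1=f\chi_{2B}$ and $f_2=f\chi_{(2B)^{\complement}}$, and choose the subtraction constant $c:=\mathcal I_\alpha f_2(u_0)$, which the tail estimate below shows to be finite. Since $|B|\approx r^Q$ and $(\mathcal I_\alpha f)_B$ is, up to a factor $2$, the best constant in the mean oscillation, it suffices to bound $|B|^{-1-\beta/Q}\int_B|\mathcal I_\alpha f_1|\,du$ and $|B|^{-1-\beta/Q}\int_B|\mathcal I_\alpha f_2-c|\,du$ by $C\|f\|_{L^{p,\kappa}_{\rho,\theta}}[1+r/\rho(u_0)]^{\theta}$, after which $\mathcal I_\alpha f\in\mathcal C^{\beta}_{\rho,\theta}(\mathbb H^n)\subset\mathcal C^{\beta}_{\rho,\infty}(\mathbb H^n)$ follows.

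For the local term I would use \eqref{claim}, Fubini, and the elementary bound $\int_B|v^{-1}u|^{\alpha-Q}\,du\leq C r^{\alpha}$ (valid for $v\in 2B$ by left-invariance of the Haar measure and \eqref{radial}) to get $\int_B|\mathcal I_\alpha f_1|\,du\leq Cr^{\alpha}\int_{2B}|f|\,du$. Hölder's inequality and the Morrey condition then give $\int_{2B}|f|\leq C\|f\|_{L^{p,\kappa}_{\rho,\theta}}\,(2r)^{Q(1-(1-\kappa)/p)}[1+2r/\rho(u_0)]^{\theta}$; inserting this and using the defining relation $\beta=\alpha-Q(1-\kappa)/p$ makes the power of $r$ cancel exactly against $|B|^{-1-\beta/Q}\approx r^{-Q-\beta}$, leaving the bound $C\|f\|_{L^{p,\kappa}_{\rho,\theta}}[1+r/\rho(u_0)]^{\theta}$.

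For the global term, for $u\in B$ I would write $\mathcal I_\alpha f_2(u)-c=\int_{(2B)^{\complement}}(\mathcal K_\alpha(u,v)-\mathcal K_\alpha(u_0,v))f(v)\,dv$, apply the smoothness estimate (noting $|u_0^{-1}u|<r$), and decompose $(2B)^{\complement}=\bigcup_{k\geq1}(2^{k+1}B\setminus 2^kB)$, on each annulus replacing $|v^{-1}u_0|$ by $2^kr$. Hölder together with the Morrey condition bounds $\int_{2^{k+1}B}|f|$ by $C\|f\|_{L^{p,\kappa}_{\rho,\theta}}(2^kr)^{Q(1-(1-\kappa)/p)}[1+2^kr/\rho(u_0)]^{\theta}$, and the arithmetic of the exponents collapses the resulting summand to $r^{\beta}\,2^{k(\beta-\delta)}[1+2^kr/\rho(u_0)]^{\theta-N}$. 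This yields $|\mathcal I_\alpha f_2(u)-c|\leq C r^{\beta}\|f\|_{L^{p,\kappa}_{\rho,\theta}}$, so that $|B|^{-1-\beta/Q}\int_B|\mathcal I_\alpha f_2-c|\,du\leq C\|f\|_{L^{p,\kappa}_{\rho,\theta}}$, and combining the two estimates completes the proof.

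The main obstacle is precisely this tail sum. Two features rescue it: the hypothesis $\beta<\delta$ makes the geometric factor $2^{k(\beta-\delta)}$ summable, and—crucially—choosing $N$ larger than $\theta$ (permissible since Lemma \ref{kernel2} holds for every $N$) lets the kernel's $\rho$-decay absorb the Morrey growth $[1+2^kr/\rho(u_0)]^{\theta}$, keeping the series bounded uniformly in $r$ and $u_0$. Without the $\rho$-dependent decay, the factor $[1+2^kr/\rho(u_0)]^{\theta}$ would defeat convergence for large $\theta$, so the whole argument hinges on having a kernel smoothness estimate carrying an arbitrarily large power of the auxiliary-function decay, which is exactly why the improved estimates of Lemma \ref{ker1} and Lemma \ref{kernel2} are needed rather than the bare Gaussian bound \eqref{heat}.
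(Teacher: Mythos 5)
Your proposal is correct and follows essentially the same route as the paper: split $f$ into a local piece and a tail, handle the local piece with the size bound on $\mathcal K_\alpha$ (Lemma \ref{kernel}) plus H\"older and the Morrey condition, and handle the tail with the smoothness estimate of Lemma \ref{kernel2} over dyadic annuli, using $\beta<\delta$ for geometric summability and a large $N$ to absorb the $[1+2^kr/\rho(u_0)]^{\theta}$ growth. The only (harmless, and in fact slightly streamlining) deviation is that you subtract the point value $\mathcal I_\alpha f_2(u_0)$ rather than the ball average $(\mathcal I_\alpha f_2)_B$, which lets you apply Lemma \ref{kernel2} with base point $u_0$ and decay already in $\rho(u_0)$, whereas the paper compares $\mathcal I_\alpha f_2(u)$ with $\mathcal I_\alpha f_2(v)$ for $u,v\in B$ and must invoke \eqref{com2} to pass from $\rho(u)$ to $\rho(u_0)$, at the cost of the extra factor $[1+r/\rho(u_0)]^{N N_0/(N_0+1)}$ in the final exponent.
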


In particular, if we take $\kappa=1-{(\alpha p)}/Q$ (or $\beta=0$), then we obtain the following result on BMO-type estimate of $\mathcal I_{\alpha}$.
\begin{cor}\label{mainthm:4}
Let $0<\alpha<Q$, $1\leq p<Q/{\alpha}$ and $\alpha p=(1-\kappa)Q$. If $V\in RH_s$ with $s\geq Q/2$, then the $\mathcal L$-fractional integral operator $\mathcal I_{\alpha}$ is bounded from $L^{p,\kappa}_{\rho,\infty}(\mathbb H^n)$ into $\mathrm{BMO}_{\rho,\infty}(\mathbb H^n)$.
\end{cor}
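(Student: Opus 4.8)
The plan is to read this off from Theorem \ref{mainthm:3} rather than reprove anything from scratch. The hypothesis $\alpha p=(1-\kappa)Q$ is literally the same as $\kappa=1-(\alpha p)/Q$, and feeding this into the exponent relation of Theorem \ref{mainthm:3} gives $\beta/Q=\alpha/Q-(1-\kappa)/p=0$, i.e.\ $\beta=0$. Since $\beta=0<\delta$ lies in the admissible range of that theorem, and since by Remark (i) after the definition of $\mathcal{C}^{\beta}_{\rho,\infty}(\mathbb H^n)$ one has $\mathcal{C}^{0}_{\rho,\infty}(\mathbb H^n)=\mathrm{BMO}_{\rho,\infty}(\mathbb H^n)$, the corollary is exactly the $\beta=0$ instance of Theorem \ref{mainthm:3}. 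So the only thing to verify is that the normalization $|B|^{1+\beta/Q}$ used in \eqref{hconti} collapses to $|B|$ at $\beta=0$, recovering the $\mathrm{BMO}_{\rho,\theta}$ definition \eqref{BM}, and that the regularity exponent $\delta$ enters the argument only through the strict inequality $\beta<\delta$, which is automatic here.

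For completeness I would also sketch the direct argument, which clarifies why $\mathrm{BMO}$ is the correct target precisely at $\alpha p=(1-\kappa)Q$. Fix $f\in L^{p,\kappa}_{\rho,\theta}(\mathbb H^n)$ and a ball $B=B(u_0,r)$, split $f=f_1+f_2$ with $f_1=f\chi_{2B}$ and $f_2=f\chi_{(2B)^{\complement}}$, and choose the constant $c_B=(\mathcal I_\alpha f_2)_B$, so that the mean oscillation of $\mathcal I_\alpha f$ over $B$ is dominated by a local term $\frac{1}{|B|}\int_B|\mathcal I_\alpha f_1|$ and a global term $\frac{1}{|B|}\int_B|\mathcal I_\alpha f_2-c_B|$. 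For the local term I would use the pointwise bound \eqref{claim}, interchange the order of integration, and invoke $\int_B|v^{-1}u|^{\alpha-Q}\,du\lesssim r^{\alpha}$ for $v\in 2B$ together with H\"older's inequality and the Morrey bound \eqref{morrey1} on $2B$. The resulting power of $r$ is $\alpha-Q(1-\kappa)/p$, which vanishes \emph{exactly} because $\alpha p=(1-\kappa)Q$; this critical cancellation is what turns a Morrey/H\"older estimate into a clean $\mathrm{BMO}$ estimate and leaves only the factor $[1+r/\rho(u_0)]^{\theta}$.

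The substantive work, and the step I expect to be the main obstacle, is the global term. I would write the oscillation of $\mathcal I_\alpha f_2$ as a kernel-difference integral and decompose $(2B)^{\complement}$ into dyadic annuli $2^{k+1}B\setminus 2^{k}B$. On each annulus one applies the regularity estimate for the kernel $\mathcal K_\alpha$ referenced as Lemma \ref{kernel2}, which supplies a smoothness factor of size $(r/2^{k}r)^{\delta}$ together with an arbitrary-order decay $[1+2^{k}r/\rho(u_0)]^{-N}$, and combines it with H\"older's inequality and \eqref{morrey1} on each dilate $2^{k+1}B$. By the same criticality the power of $2^{k}$ comes out to $-\delta$, so the series $\sum_k 2^{-k\delta}[1+2^{k}r/\rho(u_0)]^{\theta}$ governs convergence. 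The difficulty is that the Morrey growth factor $[1+2^{k}r/\rho(u_0)]^{\theta}$ can overwhelm $2^{-k\delta}$ when $\theta\ge\delta$; the remedy is to split the sum at the scale $\rho(u_0)$, treating the near annuli ($2^{k}r\lesssim\rho(u_0)$), where the $\rho$-factor is $\approx 1$ and $\sum 2^{-k\delta}$ converges, separately from the far annuli ($2^{k}r\gtrsim\rho(u_0)$), where one chooses $N>\theta$ in Lemma \ref{kernel2} (ultimately coming from Lemma \ref{ker1}) so that the decay $[1+2^{k}r/\rho(u_0)]^{-N}$ beats the growth and again yields a convergent geometric series. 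Throughout, Lemma \ref{N0} and the inequality \eqref{com2} are used to transfer the critical-radius factors between $u_0$ and points in the annuli. Collecting the two terms gives a bound $\lesssim\|f\|_{L^{p,\kappa}_{\rho,\infty}(\mathbb H^n)}[1+r/\rho(u_0)]^{\theta'}$ for a suitable $\theta'>0$, hence $\mathcal I_\alpha f\in\mathrm{BMO}_{\rho,\theta'}(\mathbb H^n)\subset\mathrm{BMO}_{\rho,\infty}(\mathbb H^n)$.
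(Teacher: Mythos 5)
Your proposal matches the paper exactly: the corollary is obtained there precisely as the $\beta=0$ instance of Theorem \ref{mainthm:3}, using $\alpha p=(1-\kappa)Q\Leftrightarrow\beta=0$ and the identification $\mathcal{C}^{0}_{\rho,\infty}(\mathbb H^n)=\mathrm{BMO}_{\rho,\infty}(\mathbb H^n)$ from the remark. Your supplementary direct sketch is essentially the paper's proof of Theorem \ref{mainthm:3} specialized to $\beta=0$ (with only cosmetic differences, e.g.\ $2B$ versus $4B$ and splitting the dyadic sum at scale $\rho(u_0)$ rather than absorbing the growth via \eqref{com2} and a large choice of $N$), so no further comment is needed.
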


\section{Proofs of Theorems $\ref{mainthm:1}$ and $\ref{mainthm:2}$}\label{sec3}
In this section, we will prove the conclusions of Theorems \ref{mainthm:1} and \ref{mainthm:2}. Assume that $P_s(u,v)$ is the kernel of the heat semigroup $\big\{e^{-s\mathcal L}\big\}_{s>0}$. Let us remind that the $\mathcal L$-fractional integral operator of order $\alpha\in(0,Q)$ can be written as
\begin{equation*}
\mathcal I_{\alpha}f(u)={\mathcal L}^{-{\alpha}/2}f(u)=\int_{\mathbb H^n}\mathcal K_{\alpha}(u,v)f(v)\,dv,
\end{equation*}
where
\begin{equation}\label{kauv}
\mathcal K_{\alpha}(u,v)=\frac{1}{\Gamma(\alpha/2)}\int_0^{\infty}P_s(u,v)\,s^{\alpha/2-1}ds.
\end{equation}

The following lemma gives the estimate of the kernel $\mathcal K_{\alpha}(u,v)$ related to the Schr\"odinger operator $\mathcal L$, which plays a key role in the proofs of our main theorems.
\begin{lem}\label{kernel}
Let $\rho$ be the auxiliary function as in \eqref{rho}. Let $V\in RH_s$ with $s\geq Q/2$ and $0<\alpha<Q$. For every positive integer $N\in\mathbb N$, there exists a positive constant $C_{N,\alpha}>0$ such that for all $u$ and $v$ in $\mathbb H^n$,
\begin{equation}\label{WH1}
\big|\mathcal K_{\alpha}(u,v)\big|\leq C_{N,\alpha}\bigg[1+\frac{|v^{-1}u|}{\rho(u)}\bigg]^{-N}\frac{1}{|v^{-1}u|^{Q-\alpha}}.
\end{equation}
\end{lem}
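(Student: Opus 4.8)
The plan is to improve the naive bound on the kernel $\mathcal K_{\alpha}(u,v)$ that was already derived in the introduction (using only the Gaussian estimate \eqref{heat}) by inserting the sharper heat kernel bound from Lemma \ref{ker1}, which carries the extra decay factor involving the auxiliary function $\rho$. Recall from \eqref{kauv} that
\begin{equation*}
\mathcal K_{\alpha}(u,v)=\frac{1}{\Gamma(\alpha/2)}\int_0^{\infty}P_s(u,v)\,s^{\alpha/2-1}\,ds.
\end{equation*}
Fixing a positive integer $M$ to be chosen, I would substitute the bound from Lemma \ref{ker1},
\begin{equation*}
0\leq P_s(u,v)\leq C_M\,s^{-Q/2}\exp\bigg(-\frac{|v^{-1}u|^2}{As}\bigg)\bigg[1+\frac{\sqrt{s}}{\rho(u)}+\frac{\sqrt{s}}{\rho(v)}\bigg]^{-M},
\end{equation*}
into the integral, discarding the $\rho(v)$ term since we only need decay in $\rho(u)$. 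The resulting estimate is
\begin{equation*}
\big|\mathcal K_{\alpha}(u,v)\big|\leq\frac{C_M}{\Gamma(\alpha/2)}\int_0^{\infty}\exp\bigg(-\frac{|v^{-1}u|^2}{As}\bigg)\bigg[1+\frac{\sqrt{s}}{\rho(u)}\bigg]^{-M}s^{\alpha/2-Q/2-1}\,ds.
\end{equation*}

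The central computation is to estimate this integral so that it yields both the singular factor $|v^{-1}u|^{\alpha-Q}$ and the decay factor $[1+|v^{-1}u|/\rho(u)]^{-N}$. First I would perform the change of variables $t=|v^{-1}u|^2/(As)$, which transforms the integral into one of the form $\int_0^{\infty}e^{-t}[1+c/\sqrt{t}]^{-M}t^{(Q-\alpha)/2-1}\,dt$ up to the expected power $|v^{-1}u|^{\alpha-Q}$ pulled out front, where $c$ is comparable to $|v^{-1}u|/\rho(u)$. The key observation is that the bracket $[1+c/\sqrt{t}]^{-M}$ only helps for small $t$ (i.e.\ large $s$), and a clean way to extract the desired polynomial decay in $c$ is to split the $s$-integral at the threshold $s\approx\rho(u)^2$, or equivalently to compare $c$ against $1$. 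When $|v^{-1}u|\leq\rho(u)$ the factor $[1+|v^{-1}u|/\rho(u)]^{-N}$ is harmless (bounded above and below by constants), so the naive bound \eqref{claim}-type estimate already suffices. When $|v^{-1}u|>\rho(u)$, I would extract the decay by noting that on the region where the Gaussian is not yet negligible one has $\sqrt{s}\lesssim|v^{-1}u|$, so the bracket is bounded by $[1+\sqrt{s}/\rho(u)]^{-M}$ which can be traded against a power of $|v^{-1}u|/\rho(u)$ after absorbing enough powers of $t$ into the rapidly decaying $e^{-t}$.

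I expect the main obstacle to be organizing this trade-off cleanly, namely choosing $M$ large enough (in terms of the target exponent $N$ and the gap $Q-\alpha$) so that the remaining gamma-type integral converges and the loss in the power of $t$ does not destroy the singularity $|v^{-1}u|^{\alpha-Q}$. Concretely, I would aim to show that one may take $M=N$ (or $M$ a fixed multiple of $N$ plus a constant depending on $Q-\alpha$) and verify that after the substitution the integral $\int_0^{\infty}e^{-t}t^{\gamma}\,dt$ that appears has exponent $\gamma>-1$, so that it equals a finite constant $\Gamma(\gamma+1)$. Since $N$ is arbitrary in the statement and $M$ in Lemma \ref{ker1} is also arbitrary, this matching of exponents is always achievable, and the constant $C_{N,\alpha}$ absorbs the resulting gamma factors and the constants $A$, $C_M$. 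The only real care needed is to keep the decay factor attached to $\rho(u)$ rather than $\rho(v)$ throughout, which is why I discard the $\rho(v)$ contribution at the very first step.
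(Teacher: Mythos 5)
Your plan is correct and follows essentially the same route as the paper: insert the improved heat-kernel bound of Lemma \ref{ker1} into the formula \eqref{kauv} for $\mathcal K_{\alpha}(u,v)$, discard the $\rho(v)$ term, and then split the $s$-integral so that for large $s$ the bracket $[1+\sqrt{s}/\rho(u)]^{-N}$ is estimated directly by monotonicity while for small $s$ the Gaussian factor supplies the extra powers of $\sqrt{s}/|v^{-1}u|$ needed to convert the bracket into $[1+|v^{-1}u|/\rho(u)]^{-N}$. The paper organizes the trade-off by splitting at $s=|v^{-1}u|^2$ and using the elementary inequality $\sqrt{s}/|v^{-1}u|\leq(\sqrt{s}+\rho(u))/(|v^{-1}u|+\rho(u))$ rather than your case analysis on $|v^{-1}u|$ versus $\rho(u)$, but this is only a difference in bookkeeping, not in substance.
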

\begin{proof}
From Lemma \ref{ker1} and \eqref{kauv}, it follows that for $\alpha\in(0,Q)$,
\begin{equation*}
\begin{split}
\big|\mathcal K_{\alpha}(u,v)\big|&\leq\frac{1}{\Gamma(\alpha/2)}\int_0^{\infty}\big|P_s(u,v)\big|\,s^{\alpha/2-1}ds\\
&\leq\frac{1}{\Gamma(\alpha/2)}\int_0^{\infty}\frac{C_N}{s^{Q/2}}\cdot\exp\bigg(-\frac{|v^{-1}u|^2}{As}\bigg)
\bigg[1+\frac{\sqrt{s\,}}{\rho(u)}+\frac{\sqrt{s\,}}{\rho(v)}\bigg]^{-N}s^{\alpha/2-1}ds\\
&\leq\frac{1}{\Gamma(\alpha/2)}\int_0^{\infty}\frac{C_N}{s^{Q/2}}\cdot\exp\bigg(-\frac{|v^{-1}u|^2}{As}\bigg)
\bigg[1+\frac{\sqrt{s\,}}{\rho(u)}\bigg]^{-N}s^{\alpha/2-1}ds.
\end{split}
\end{equation*}
We now consider two cases $s>|v^{-1}u|^2$ and $0\leq s\leq|v^{-1}u|^2$, respectively. Thus, $|\mathcal K_{\alpha}(u,v)|\leq I+II$, where
\begin{equation*}
I=\frac{1}{\Gamma(\alpha/2)}\int_{|v^{-1}u|^2}^{\infty}\frac{C_N}{s^{Q/2}}\cdot\exp\bigg(-\frac{|v^{-1}u|^2}{As}\bigg)
\bigg[1+\frac{\sqrt{s\,}}{\rho(u)}\bigg]^{-N}s^{\alpha/2-1}ds
\end{equation*}
and
\begin{equation*}
II=\frac{1}{\Gamma(\alpha/2)}\int_0^{|v^{-1}u|^2}\frac{C_N}{s^{Q/2}}\cdot\exp\bigg(-\frac{|v^{-1}u|^2}{As}\bigg)
\bigg[1+\frac{\sqrt{s\,}}{\rho(u)}\bigg]^{-N}s^{\alpha/2-1}ds.
\end{equation*}
When $s>|v^{-1}u|^2$, then $\sqrt{s\,}>|v^{-1}u|$, and hence
\begin{equation*}
\begin{split}
I&\leq\frac{1}{\Gamma(\alpha/2)}\int_{|v^{-1}u|^2}^{\infty}\frac{C_N}{s^{Q/2}}\cdot\exp\bigg(-\frac{|v^{-1}u|^2}{As}\bigg)
\bigg[1+\frac{|v^{-1}u|}{\rho(u)}\bigg]^{-N}s^{\alpha/2-1}ds\\
&\leq C_{N,\alpha}\bigg[1+\frac{|v^{-1}u|}{\rho(u)}\bigg]^{-N}\int_{|v^{-1}u|^2}^{\infty}s^{\alpha/2-Q/2-1}ds\\
&\leq C_{N,\alpha}\bigg[1+\frac{|v^{-1}u|}{\rho(u)}\bigg]^{-N}\frac{1}{|v^{-1}u|^{Q-\alpha}},
\end{split}
\end{equation*}
where the last integral converges because $0<\alpha<Q$. On the other hand,
\begin{equation*}
\begin{split}
II&\leq C_{N,\alpha}\int_0^{|v^{-1}u|^2}\frac{1}{s^{Q/2}}\cdot\bigg(\frac{|v^{-1}u|^2}{s}\bigg)^{-(Q/2+N/2)}
\bigg[1+\frac{\sqrt{s\,}}{\rho(u)}\bigg]^{-N}s^{\alpha/2-1}ds\\
&=C_{N,\alpha}\int_0^{|v^{-1}u|^2}\frac{1}{|v^{-1}u|^Q}\cdot\bigg(\frac{\sqrt{s\,}}{|v^{-1}u|}\bigg)^{N}
\bigg[1+\frac{\sqrt{s\,}}{\rho(u)}\bigg]^{-N}s^{\alpha/2-1}ds.
\end{split}
\end{equation*}
It is easy to see that if $0\leq s\leq|v^{-1}u|^2$, then it holds true that
\begin{equation*}
\frac{\sqrt{s\,}}{|v^{-1}u|}\leq\frac{\sqrt{s\,}+\rho(u)}{|v^{-1}u|+\rho(u)}.
\end{equation*}
Hence,
\begin{equation*}
\begin{split}
II&\leq C_{N,\alpha}\int_0^{|v^{-1}u|^2}\frac{1}{|v^{-1}u|^Q}\cdot\bigg[\frac{\sqrt{s\,}+\rho(u)}{|v^{-1}u|+\rho(u)}\bigg]^{N}
\bigg[\frac{\sqrt{s\,}+\rho(u)}{\rho(u)}\bigg]^{-N}s^{\alpha/2-1}ds\\
&=\frac{C_{N,\alpha}}{|v^{-1}u|^Q}\bigg[1+\frac{|v^{-1}u|}{\rho(u)}\bigg]^{-N}\int_0^{|v^{-1}u|^2}s^{\alpha/2-1}ds\\
&=C_{N,\alpha}\bigg[1+\frac{|v^{-1}u|}{\rho(u)}\bigg]^{-N}\frac{1}{|v^{-1}u|^{Q-\alpha}}.
\end{split}
\end{equation*}
Combining the estimates of $I$ and $II$ yields the desired estimate \eqref{WH1} for $\alpha\in(0,Q)$. This concludes the proof of the lemma.
\end{proof}

Let $\rho$ be the critical radius function as in \eqref{rho}. For fixed $N\in\mathbb N$, the maximal operator $\mathcal M_{\rho,N}$ is defined by setting, for any $f\in L^1_{\mathrm{loc}}(\mathbb H^n)$ and $u\in \mathbb H^n$,
\begin{equation*}
\mathcal M_{\rho,N}f(u):=\sup_{r>0}\left[1+\frac{r}{\rho(u)}\right]^{-N}\frac{1}{|B(u,r)|}\int_{B(u,r)}|f(v)|\,dv,
\end{equation*}
where the supremum is taken over all balls $B$ in $\mathbb H^n$ centered at $u$. The following lemma is very useful to us, which gives the relation between $\mathcal I_{\alpha}$ and maximal operator $\mathcal M_{\rho,N}$ on the Heisenberg group.
\begin{lem}\label{keylem}
Let $\rho$ be as in \eqref{rho}. Let $0<\alpha<Q$, $1\leq p<Q/{\alpha}$, $0<\kappa<1-{(\alpha p)}/Q$ and $1/q=1/p-\alpha/{Q(1-\kappa)}$. Then for any positive integer $N\in\mathbb N$ and any $f\in L^{p,\kappa}_{\rho,\theta}(\mathbb H^n)$ with $\theta\leq N$, we have
\begin{equation*}
|\mathcal I_{\alpha}f(u)|\lesssim\Big[\mathcal M_{\rho,N}f(u)\Big]^{p/q}\cdot\Big[\big\|f\big\|_{L^{p,\kappa}_{\rho,\theta}(\mathbb H^n)}\Big]^{1-p/q},\quad u\in\mathbb H^n.
\end{equation*}
\end{lem}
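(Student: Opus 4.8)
The plan is to derive the pointwise inequality from the kernel bound \eqref{WH1} of Lemma \ref{kernel} by a Hedberg-type splitting of the domain of integration at a radius $\delta>0$ that will be optimized at the very end. Starting from
\[
|\mathcal I_{\alpha}f(u)|\lesssim\int_{\mathbb H^n}|f(v)|\Big[1+\frac{|v^{-1}u|}{\rho(u)}\Big]^{-N}\frac{1}{|v^{-1}u|^{Q-\alpha}}\,dv,
\]
I would split the integral into the near part over $B(u,\delta)$ and the far part over $\mathbb H^n\setminus B(u,\delta)$, and treat each by a dyadic decomposition into annuli centered at $u$.

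For the near part I would write $B(u,\delta)=\bigcup_{j\geq0}\{v:2^{-j-1}\delta\leq|v^{-1}u|<2^{-j}\delta\}$. On the $j$-th annulus $|v^{-1}u|\approx2^{-j}\delta$, so $|v^{-1}u|^{\alpha-Q}\lesssim(2^{-j}\delta)^{\alpha-Q}$, and the key point is to retain the factor $[1+|v^{-1}u|/\rho(u)]^{-N}\lesssim[1+2^{-j}\delta/\rho(u)]^{-N}$ rather than discarding it. Enlarging each annulus to the ball $B(u,2^{-j}\delta)$ and using the defining property of the maximal operator in the form $\frac{1}{|B(u,r)|}\int_{B(u,r)}|f|\leq[1+r/\rho(u)]^{N}\mathcal M_{\rho,N}f(u)$, the two weights $[1+2^{-j}\delta/\rho(u)]^{\pm N}$ cancel; together with $|B(u,2^{-j}\delta)|\approx(2^{-j}\delta)^{Q}$ the $j$-th annulus contributes $\lesssim(2^{-j}\delta)^{\alpha}\mathcal M_{\rho,N}f(u)$. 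Summing the geometric series (convergent since $\alpha>0$) gives a near-part bound $\lesssim\delta^{\alpha}\mathcal M_{\rho,N}f(u)$.

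For the far part I would decompose $\mathbb H^n\setminus B(u,\delta)=\bigcup_{j\geq0}\{v:2^{j}\delta\leq|v^{-1}u|<2^{j+1}\delta\}$ and, on each annulus, apply H\"older's inequality with exponents $p,p'$ followed by the Morrey estimate \eqref{morrey1} on the ball $B(u,2^{j+1}\delta)$, namely $\big(\int_{B}|f|^{p}\big)^{1/p}\leq|B|^{\kappa/p}[1+r/\rho(u)]^{\theta}\|f\|_{L^{p,\kappa}_{\rho,\theta}(\mathbb H^n)}$. Collecting the powers of $2^{j}\delta$ coming from $|v^{-1}u|^{\alpha-Q}$ and from $|B(u,2^{j+1}\delta)|^{\kappa/p+1/p'}$, the total exponent is $\alpha-Q(1-\kappa)/p$, which by the relation $1/q=1/p-\alpha/[Q(1-\kappa)]$ equals exactly $-Q(1-\kappa)/q<0$. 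The residual weight $[1+2^{j}\delta/\rho(u)]^{-N}[1+2^{j+1}\delta/\rho(u)]^{\theta}$ stays $\lesssim1$ precisely because $\theta\leq N$. Hence each annulus contributes $\lesssim(2^{j}\delta)^{-Q(1-\kappa)/q}\|f\|_{L^{p,\kappa}_{\rho,\theta}(\mathbb H^n)}$, and summing the geometric series yields a far-part bound $\lesssim\delta^{-Q(1-\kappa)/q}\|f\|_{L^{p,\kappa}_{\rho,\theta}(\mathbb H^n)}$.

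Combining the two parts gives, for every $\delta>0$,
\[
|\mathcal I_{\alpha}f(u)|\lesssim\delta^{\alpha}\mathcal M_{\rho,N}f(u)+\delta^{-Q(1-\kappa)/q}\|f\|_{L^{p,\kappa}_{\rho,\theta}(\mathbb H^n)}.
\]
I would then optimize by balancing the two terms (the cases $\mathcal M_{\rho,N}f(u)=0$ or $\|f\|_{L^{p,\kappa}_{\rho,\theta}(\mathbb H^n)}=0$ being trivial), i.e.\ choosing $\delta$ with $\delta^{\alpha+Q(1-\kappa)/q}=\|f\|_{L^{p,\kappa}_{\rho,\theta}(\mathbb H^n)}/\mathcal M_{\rho,N}f(u)$. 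Since $\alpha=Q(1-\kappa)(1/p-1/q)$ one has $\alpha+Q(1-\kappa)/q=Q(1-\kappa)/p$, so the optimized bound collapses to $[\mathcal M_{\rho,N}f(u)]^{p/q}[\|f\|_{L^{p,\kappa}_{\rho,\theta}(\mathbb H^n)}]^{1-p/q}$, which is exactly the claim. The hard part will be the careful bookkeeping of the $\rho$-weights: the estimate only closes because the $[1+\cdot/\rho(u)]$ factors produced by the kernel, the maximal operator, and the Morrey norm either cancel (near part) or remain bounded thanks to $\theta\leq N$ (far part); the remaining exponent arithmetic is engineered to collapse precisely to the exponents $p/q$ and $1-p/q$.
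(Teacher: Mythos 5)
Your proposal is correct and follows essentially the same route as the paper's proof: the Hedberg-type splitting of the kernel bound from Lemma \ref{kernel} at a radius to be optimized, dyadic annuli with the $[1+\cdot/\rho(u)]^{-N}$ weight absorbed by the maximal operator on the near part and controlled via $\theta\leq N$ together with H\"older and the Morrey condition on the far part, and the final balancing $\sigma^{Q(1-\kappa)/p}=\|f\|_{L^{p,\kappa}_{\rho,\theta}(\mathbb H^n)}/\mathcal M_{\rho,N}f(u)$. The exponent bookkeeping ($\alpha+Q(1-\kappa)/q=Q(1-\kappa)/p$, giving the powers $p/q$ and $1-p/q$) matches the paper exactly.
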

\begin{proof}[Proof of Lemma $\ref{keylem}$]
Following the idea of \cite{adams} (see also \cite{adams1}), by definition and Lemma \ref{kernel}, we decompose the function $\mathcal I_{\alpha}f$ as
\begin{equation*}
\begin{split}
|\mathcal I_{\alpha}f(u)|&=\bigg|\int_{\mathbb H^n}\mathcal K_{\alpha}(u,v)f(v)\,dv\bigg|\\
&\leq C_{N,\alpha}\int_{\mathbb H^n}\bigg[1+\frac{|v^{-1}u|}{\rho(u)}\bigg]^{-N}\frac{1}{|v^{-1}u|^{Q-\alpha}}\cdot|f(v)|\,dv\\
&=C_{N,\alpha}\bigg(\int_{|v^{-1}u|<\sigma}\cdots+\int_{|v^{-1}u|\geq\sigma}\cdots\bigg):=\mathcal I_{\alpha}^{(1)}f(u)+\mathcal I_{\alpha}^{(2)}f(u),
\end{split}
\end{equation*}
where $\sigma>0$ is a constant which will be determined later. In what follows, we consider each part separately. For the first integral, we have
\begin{equation*}
\begin{split}
\mathcal I_{\alpha}^{(1)}f(u)
&=C_{N,\alpha}\sum_{j=1}^\infty\int_{2^{-j}\sigma\leq|v^{-1}u|<2^{-j+1}\sigma}\bigg[1+\frac{|v^{-1}u|}{\rho(u)}\bigg]^{-N}\frac{1}{|v^{-1}u|^{Q-\alpha}}\cdot|f(v)|\,dv\\
&\leq C_{N,\alpha}\sum_{j=1}^\infty\int_{2^{-j}\sigma\leq|v^{-1}u|<2^{-j+1}\sigma}\frac{1}{(2^{-j}\sigma)^{Q-\alpha}}\bigg[1+\frac{2^{-j}\sigma}{\rho(u)}\bigg]^{-N}\cdot|f(v)|\,dv\\
&\leq C_{N,\alpha,n}\sum_{j=1}^\infty\frac{(2^{-j+1}\sigma)^Q}{(2^{-j}\sigma)^{Q-\alpha}}\\
&\times\bigg[1+\frac{2^{-j+1}\sigma}{\rho(u)}\bigg]^{-N}\frac{1}{|B(u,2^{-j+1}\sigma)|}\int_{|v^{-1}u|<2^{-j+1}\sigma}|f(v)|\,dv.
\end{split}
\end{equation*}
By the definition of the maximal operator $\mathcal M_{\rho,N}$, we thus obtain
\begin{equation*}
\begin{split}
\mathcal I_{\alpha}^{(1)}f(u)
&\leq C_{N,\alpha,n}\sum_{j=1}^\infty\frac{\sigma^\alpha}{2^{j\alpha}}\cdot \mathcal M_{\rho,N}f(u)
 \leq C\sigma^\alpha\cdot\mathcal M_{\rho,N}f(u).
\end{split}
\end{equation*}
For the second integral, we have
\begin{equation*}
\begin{split}
\mathcal I_{\alpha}^{(2)}f(u)
&=C_{N,\alpha}\sum_{j=1}^\infty\int_{2^{j-1}\sigma\leq|v^{-1}u|<2^{j}\sigma}\bigg[1+\frac{|v^{-1}u|}{\rho(u)}\bigg]^{-N}\frac{1}{|v^{-1}u|^{Q-\alpha}}\cdot|f(v)|\,dv\\
&\leq C_{N,\alpha}\sum_{j=1}^\infty\int_{2^{j-1}\sigma\leq|v^{-1}u|<2^{j}\sigma}\frac{1}{(2^{j-1}\sigma)^{Q-\alpha}}\bigg[1+\frac{2^{j-1}\sigma}{\rho(u)}\bigg]^{-N}\cdot|f(v)|\,dv.
\end{split}
\end{equation*}
Let $p'$ be the conjugate exponent of $p$ and $1'=\infty$. By using H\"older's inequality, we obtain that
\begin{equation*}
\begin{split}
\mathcal I_{\alpha}^{(2)}f(u)&\leq C_{N,\alpha}\sum_{j=1}^\infty\frac{1}{(2^{j-1}\sigma)^{Q-\alpha}}\bigg[1+\frac{2^{j-1}\sigma}{\rho(u)}\bigg]^{-N}\\
&\times\bigg(\int_{|v^{-1}u|<2^{j}\sigma}|f(v)|^p\,dv\bigg)^{1/p}
\bigg(\int_{|v^{-1}u|<2^{j}\sigma}1\,dv\bigg)^{1/{p'}}\\
&\leq C_{N,\alpha}\sum_{j=1}^\infty\frac{1}{(2^{j}\sigma)^{Q-\alpha}}\bigg[1+\frac{2^{j}\sigma}{\rho(u)}\bigg]^{-N+\theta}
\big|B(u,2^{j}\sigma)\big|^{\kappa/p+1/{p'}}\big\|f\big\|_{L^{p,\kappa}_{\rho,\theta}(\mathbb H^n)}.
\end{split}
\end{equation*}
By the assumption $\theta\leq N$, we can further obtain
\begin{equation*}
\begin{split}
\mathcal I_{\alpha}^{(2)}f(u)&\leq C_{N,\alpha,n}\big\|f\big\|_{L^{p,\kappa}_{\rho,\theta}(\mathbb H^n)}
\sum_{j=1}^\infty\frac{(2^{j}\sigma)^{Q[\kappa/p+1/{p'}]}}{(2^{j}\sigma)^{Q-\alpha}}\\
&=C\big\|f\big\|_{L^{p,\kappa}_{\rho,\theta}(\mathbb H^n)}
\sum_{j=1}^\infty\frac{1}{(2^{j}\sigma)^{Q[1/p-\kappa/p-\alpha/Q]}}.
\end{split}
\end{equation*}
Note that ${(1-\kappa)}/p>\alpha/Q$ and the last series is convergent. Then we have
\begin{equation*}
\begin{split}
\mathcal I_{\alpha}^{(2)}f(u)&\leq C\sigma^{\alpha+Q(\kappa-1)/p}\cdot\big\|f\big\|_{L^{p,\kappa}_{\rho,\theta}(\mathbb H^n)}.
\end{split}
\end{equation*}
Summing up the above estimates for $\mathcal I_{\alpha}^{(1)}f$ and $\mathcal I_{\alpha}^{(2)}f$, we have
\begin{equation}\label{twosum}
\begin{split}
|\mathcal I_{\alpha}f(u)|&\lesssim\Big[\sigma^\alpha\cdot\mathcal M_{\rho,N}f(u)+\sigma^{\alpha+Q(\kappa-1)/p}\cdot\big\|f\big\|_{L^{p,\kappa}_{\rho,\theta}(\mathbb H^n)}\Big].
\end{split}
\end{equation}
We now choose $\sigma$ such that
\begin{equation*}
\sigma^\alpha\cdot\mathcal M_{\rho,N}f(u)=\sigma^{\alpha+Q(\kappa-1)/p}\cdot\big\|f\big\|_{L^{p,\kappa}_{\rho,\theta}(\mathbb H^n)}.
\end{equation*}
That is,
\begin{equation*}
\sigma^{Q(1-\kappa)/p}=\frac{\big\|f\big\|_{L^{p,\kappa}_{\rho,\theta}(\mathbb H^n)}}{\mathcal M_{\rho,N}f(u)}.
\end{equation*}
Putting this back into \eqref{twosum} and noting that $p/{Q(1-\kappa)}\cdot\alpha=(1/p-1/q)\cdot p$, we deduce that for any $u\in\mathbb H^n$,
\begin{equation*}
\begin{split}
|\mathcal I_{\alpha}f(u)|\lesssim\sigma^\alpha\cdot\mathcal M_{\rho,N}f(u)
&=\Bigg[\frac{\big\|f\big\|_{L^{p,\kappa}_{\rho,\theta}(\mathbb H^n)}}{\mathcal M_{\rho,N}f(u)}\Bigg]^{(1/p-1/q)\cdot p}\cdot\mathcal M_{\rho,N}f(u)\\
&=\Big[\mathcal M_{\rho,N}f(u)\Big]^{p/q}\cdot\Big[\big\|f\big\|_{L^{p,\kappa}_{\rho,\theta}(\mathbb H^n)}\Big]^{1-p/q},
\end{split}
\end{equation*}
which is the desired conclusion.
\end{proof}

Furthermore, let us give the following two basic estimates for the maximal operator $\mathcal M_{\rho,N}$ on the Morrey spaces $L^{p,\kappa}_{\rho,\theta}(\mathbb H^n)$ with $\theta\in(0,N]$ and $(p,\kappa)\in[1,\infty)\times(0,1)$.
\begin{thm}\label{maxthm:1}
Let $1<p<\infty$ and $0<\kappa<1$. If $V\in RH_s$ with $s\geq Q/2$, then for fixed $N\in\mathbb N$, the maximal operator $\mathcal M_{\rho,N}$ is bounded on $L^{p,\kappa}_{\rho,\theta}(\mathbb H^n)$ for every $\theta$ with $0<\theta(N_0+1)\leq N$. Here $N_0$ is the same as in \eqref{com}.
\end{thm}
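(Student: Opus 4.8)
The plan is to verify the defining Morrey estimate \eqref{morrey1} for $g=\mathcal M_{\rho,N}f$ directly. Fixing an arbitrary ball $B=B(u_0,r)$, I will bound
\[
\frac{1}{|B|^{\kappa}}\int_B \big|\mathcal M_{\rho,N}f(u)\big|^p\,du \lesssim \Big[1+\tfrac{r}{\rho(u_0)}\Big]^{p\theta}\,\big\|f\big\|_{L^{p,\kappa}_{\rho,\theta}(\mathbb H^n)}^p,
\]
which, after taking $p$-th roots and the supremum over $B$, is exactly $\|\mathcal M_{\rho,N}f\|_{L^{p,\kappa}_{\rho,\theta}}\lesssim\|f\|_{L^{p,\kappa}_{\rho,\theta}}$. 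The mechanism is the standard near/far splitting adapted to the critical-radius weight: write $f=f_1+f_2$ with $f_1=f\chi_{2B}$ and $f_2=f\chi_{\mathbb H^n\setminus 2B}$, and use sublinearity $\mathcal M_{\rho,N}f\le \mathcal M_{\rho,N}f_1+\mathcal M_{\rho,N}f_2$ to treat the two pieces separately.

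For the local piece I would discard the critical-radius factor, using $[1+t/\rho(u)]^{-N}\le 1$ to dominate $\mathcal M_{\rho,N}f_1$ pointwise by the (uncentered) Hardy--Littlewood maximal function $\mathcal M f_1$. Since $(\mathbb H^n,d,du)$ is a space of homogeneous type (the balls are doubling by \eqref{homonorm}), $\mathcal M$ is bounded on $L^p(\mathbb H^n)$ for $p>1$, so $\int_B|\mathcal M f_1|^p\lesssim\int_{2B}|f|^p$. Inserting the Morrey bound \eqref{morrey1} for $f$ on $2B=B(u_0,2r)$ and using $[1+2r/\rho(u_0)]\le 2[1+r/\rho(u_0)]$ together with $|2B|^{\kappa}=2^{Q\kappa}|B|^{\kappa}$ closes this piece with the correct weight.

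For the global piece the key geometric observation is that, for $u\in B$, any ball $B(u,t)$ meeting $\operatorname{supp}f_2\subseteq(2B)^{\complement}$ must have $t>r$, so only large radii contribute. For such $t$, Hölder's inequality and the Morrey bound \eqref{morrey1} applied on $B(u,t)$ give
\[
\frac{1}{|B(u,t)|}\int_{B(u,t)}|f|\,dv\lesssim |B(u,t)|^{(\kappa-1)/p}\Big[1+\tfrac{t}{\rho(u)}\Big]^{\theta}\big\|f\big\|_{L^{p,\kappa}_{\rho,\theta}};
\]
multiplying by the weight $[1+t/\rho(u)]^{-N}$ and using $\theta\le N$ (which is implied by the hypothesis) to bound $[1+t/\rho(u)]^{\theta-N}\le1$, the remaining factor $|B(u,t)|^{(\kappa-1)/p}\approx t^{Q(\kappa-1)/p}$ is decreasing in $t$ (here $\kappa<1$ is essential). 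Taking the supremum over $t>r$ yields $\mathcal M_{\rho,N}f_2(u)\lesssim r^{Q(\kappa-1)/p}\|f\|_{L^{p,\kappa}_{\rho,\theta}}$ uniformly for $u\in B$, and integrating this constant over $B$ reproduces exactly the normalisation $r^{Q\kappa}$, giving $\frac{1}{|B|^{\kappa}}\int_B|\mathcal M_{\rho,N}f_2|^p\lesssim\|f\|_{L^{p,\kappa}_{\rho,\theta}}^p\le[1+r/\rho(u_0)]^{p\theta}\|f\|_{L^{p,\kappa}_{\rho,\theta}}^p$.

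The main obstacle is the bookkeeping of the critical-radius weights across the two pieces. The argument above combines weights at a single base point, but if one instead normalises every occurrence of $\rho(u)$ for $u\in B$ to $\rho(u_0)$ — which is the convenient route when the far field is summed over dyadic annuli $2^{k}r\le|v^{-1}u|<2^{k+1}r$ — one must invoke the comparison estimate of Lemma \ref{N0} in its dyadic form \eqref{com2}. Trading $[1+2^kr/\rho(u)]^{-N}$ for a constant times $[1+r/\rho(u_0)]^{\frac{N N_0}{N_0+1}}[1+2^kr/\rho(u_0)]^{-N}$ costs a power $N_0/(N_0+1)$ at each step, and it is precisely the requirement that the surplus weight be absorbed into $[1+r/\rho(u_0)]^{\theta}$ that forces the balancing condition $\theta(N_0+1)\le N$ among the exponents $\theta$, $N$ and $N_0$. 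I expect this reconciliation of base points to be the only delicate point, the remainder being the routine Morrey estimates sketched above.
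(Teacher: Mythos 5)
Your proposal is correct, and its treatment of the far-field term is genuinely different from (and slightly sharper than) the paper's. Both arguments use the same splitting $f=f_1+f_2$ with $f_1=f\chi_{2B}$ and handle the local piece identically via the $L^p$-boundedness of the Hardy--Littlewood maximal operator plus the Morrey bound on $2B$. For $f_2$, however, the paper enlarges a competitor ball $B(u,r')$ with $r'>r$ to $B(u_0,2r')$ and applies the Morrey estimate on that \emph{recentred} ball; since the damping weight $[1+r'/\rho(u)]^{-N}$ lives at $u$ while the Morrey factor $[1+2r'/\rho(u_0)]^{\theta}$ lives at $u_0$, it must invoke \eqref{com2} to reconcile the two base points, which costs the factor $N_0/(N_0+1)$ and is exactly what forces the hypothesis $\theta(N_0+1)\leq N$. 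You instead apply the Morrey estimate directly on $B(u,t)$ centred at $u$, so the weight and the Morrey factor share the same base point, $[1+t/\rho(u)]^{\theta-N}\leq 1$ needs only $\theta\leq N$, and the monotonicity of $t\mapsto t^{Q(\kappa-1)/p}$ (using $\kappa<1$) finishes the supremum at $t=r$. Your route is cleaner, avoids Lemma \ref{N0} entirely in this proof, and establishes the conclusion under the weaker condition $\theta\leq N$ (which the stated hypothesis $\theta(N_0+1)\leq N$ implies, since $N_0>0$); the paper's route buys the explicit pointwise estimate \eqref{Mf2} anchored at $u_0$, which it then reuses verbatim in the weak-type argument for Theorem \ref{maxthm:2}, but your pointwise bound $\mathcal M_{\rho,N}f_2(u)\lesssim |B|^{(\kappa-1)/p}\|f\|_{L^{p,\kappa}_{\rho,\theta}}$ would serve that purpose equally well, including at $p=1$.
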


\begin{thm}\label{maxthm:2}
Let $p=1$ and $0<\kappa<1$. If $V\in RH_s$ with $s\geq Q/2$, then for fixed $N\in\mathbb N$, the maximal operator $\mathcal M_{\rho,N}$ is bounded from $L^{1,\kappa}_{\rho,\theta}(\mathbb H^n)$ into $WL^{1,\kappa}_{\rho,\theta}(\mathbb H^n)$ for every $\theta$ with $0<\theta(N_0+1)\leq N$. Here $N_0$ is the same as in \eqref{com}.
\end{thm}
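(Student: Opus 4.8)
The plan is to fix a ball $B=B(u_0,r)$ and a level $\lambda>0$ and to prove the single estimate
\[
\lambda\cdot\big|\{u\in B:\mathcal M_{\rho,N}f(u)>\lambda\}\big|\le C\,\|f\|_{L^{1,\kappa}_{\rho,\theta}(\mathbb H^n)}\,|B|^{\kappa}\Big[1+\tfrac{r}{\rho(u_0)}\Big]^{\theta},
\]
with $C$ independent of $B$, $\lambda$ and $f$. Indeed, dividing by $|B|^{\kappa}$, taking the supremum over $\lambda$, multiplying by $[1+r/\rho(u_0)]^{-\theta}$ and taking the supremum over all balls $B$ then yields exactly $\|\mathcal M_{\rho,N}f\|_{WL^{1,\kappa}_{\rho,\theta}(\mathbb H^n)}\lesssim\|f\|_{L^{1,\kappa}_{\rho,\theta}(\mathbb H^n)}$. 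To establish the displayed inequality I would split $f=f_1+f_2$ with $f_1:=f\chi_{2B}$ and $f_2:=f\chi_{(2B)^{\complement}}$; since $\frac{1}{|B(u,t)|}\int_{B(u,t)}|f|\le\frac{1}{|B(u,t)|}\int_{B(u,t)}|f_1|+\frac{1}{|B(u,t)|}\int_{B(u,t)}|f_2|$, the operator $\mathcal M_{\rho,N}$ is subadditive, so it suffices to bound the two level sets $\{\mathcal M_{\rho,N}f_i>\lambda/2\}$ inside $B$ separately.

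For the \emph{local} piece $f_1$, I would first use the trivial pointwise domination $\mathcal M_{\rho,N}f_1(u)\le Mf_1(u)$, where $M$ is the Hardy--Littlewood maximal operator on $\mathbb H^n$, valid because the weight $[1+t/\rho(u)]^{-N}\le1$. Since $(\mathbb H^n,d,du)$ is a space of homogeneous type, $M$ is of weak type $(1,1)$, whence $\lambda\,|\{u\in B:Mf_1(u)>\lambda/2\}|\le C\int_{2B}|f(v)|\,dv$. Applying the defining inequality of $L^{1,\kappa}_{\rho,\theta}(\mathbb H^n)$ to the ball $2B=B(u_0,2r)$ and using the doublings $|2B|=2^{Q}|B|$ and $[1+2r/\rho(u_0)]\le2[1+r/\rho(u_0)]$ gives $\int_{2B}|f|\le C\|f\|_{L^{1,\kappa}_{\rho,\theta}}|B|^{\kappa}[1+r/\rho(u_0)]^{\theta}$, which is precisely the required bound for the local term.

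For the \emph{global} piece $f_2$, the key observation is a support restriction: if $u\in B$ and $0<t\le r$ then $B(u,t)\subset 2B$, so $\int_{B(u,t)}|f_2|=0$; hence only radii $t>r$ contribute to $\mathcal M_{\rho,N}f_2(u)$. For such $t$, invoking the $L^{1,\kappa}_{\rho,\theta}$ condition for the ball $B(u,t)$ (centered at $u$, so with $\rho(u)$) gives
\[
\Big[1+\tfrac{t}{\rho(u)}\Big]^{-N}\tfrac{1}{|B(u,t)|}\int_{B(u,t)}|f|\le\|f\|_{L^{1,\kappa}_{\rho,\theta}}\,|B(u,t)|^{\kappa-1}\Big[1+\tfrac{t}{\rho(u)}\Big]^{\theta-N}.
\]
Because $\theta(N_0+1)\le N$ forces $\theta\le N$, the last factor is $\le1$; and because $\kappa<1$ and $t>r$, one has $|B(u,t)|^{\kappa-1}=(t/r)^{Q(\kappa-1)}|B|^{\kappa-1}\le|B|^{\kappa-1}$. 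Taking the supremum over $t>r$ yields the uniform pointwise bound $\mathcal M_{\rho,N}f_2(u)\le C\|f\|_{L^{1,\kappa}_{\rho,\theta}}|B|^{\kappa-1}$ for every $u\in B$. Consequently the set $\{u\in B:\mathcal M_{\rho,N}f_2(u)>\lambda/2\}$ is empty once $\lambda/2$ exceeds this bound and is contained in $B$ otherwise; in either case $\lambda\,|\{u\in B:\mathcal M_{\rho,N}f_2(u)>\lambda/2\}|\le C\|f\|_{L^{1,\kappa}_{\rho,\theta}}|B|^{\kappa}\le C\|f\|_{L^{1,\kappa}_{\rho,\theta}}|B|^{\kappa}[1+r/\rho(u_0)]^{\theta}$, since the bracket is $\ge1$.

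Adding the local and global contributions gives the displayed weak-type inequality and hence the theorem. The only place where genuine care is needed is the global term: one must justify the support restriction to radii $t>r$ and then exploit \emph{both} the negative exponent $\kappa-1$ (so that $|B(u,t)|^{\kappa-1}$ is largest at the smallest admissible radius $t\approx r$) and the decay factor $[1+t/\rho(u)]^{\theta-N}\le1$ coming from $\theta\le N$. I expect this interplay — together with the harmless replacement of $\rho(u)$-quantities by the factor $[1+r/\rho(u_0)]^{\theta}\ge1$ — to be the main (though modest) obstacle, while the local estimate is a routine consequence of the weak $(1,1)$ bound for the Hardy--Littlewood maximal operator on the homogeneous space $\mathbb H^n$.
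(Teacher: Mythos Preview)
Your proof is correct; the local part and the decomposition $f=f\chi_{2B}+f\chi_{(2B)^{\complement}}$ coincide with the paper's argument. The treatment of the global piece $f_2$, however, is genuinely different and in fact simpler. The paper first shows that any ball $B(u,r')$ meeting $(2B)^{\complement}$ must have $r'>r$ and satisfies $B(u,r')\cap(2B)^{\complement}\subset B(u_0,2r')$; it then applies the Morrey condition to balls \emph{centered at $u_0$}, which forces a comparison between $[1+r'/\rho(u)]^{-N}$ and the corresponding quantity with $\rho(u_0)$ via Lemma~\ref{N0} (inequality~\eqref{com2}). That transfer costs an exponent $N/(N_0+1)$ and is exactly where the full hypothesis $\theta(N_0+1)\le N$ is used. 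You avoid this step entirely by applying the Morrey condition to the ball $B(u,t)$ itself, whose weight $[1+t/\rho(u)]^{\theta}$ already matches the factor $[1+t/\rho(u)]^{-N}$ in $\mathcal M_{\rho,N}$, so only the weaker consequence $\theta\le N$ is needed. Your argument thus proves a slightly stronger statement (boundedness for every $\theta\le N$), while the paper's approach has the virtue of producing the explicit pointwise estimate~\eqref{Mf2} with the factor $[1+r/\rho(u_0)]^{-N/(N_0+1)+\theta}$, which it reuses verbatim from the strong-type case.
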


\begin{proof}[Proof of Theorem $\ref{maxthm:1}$]
The Hardy-Littlewood maximal operator $\mathcal M$ is defined by setting, for any $f\in L^1_{\mathrm{loc}}(\mathbb H^n)$ and $u\in \mathbb H^n$,
\begin{equation*}
\mathcal Mf(u):=\sup_{u\in B}\frac{1}{|B|}\int_{B}|f(v)|\,dv,
\end{equation*}
where the supremum is taken over all balls $B$ in $\mathbb H^n$ containing $u$. It is shown that the maximal operator $\mathcal M$ on $\mathbb H^n$ is of strong-type $(p,p)$ for $1<p<\infty$, and is of weak-type $(1,1)$. This result can be found in \cite{folland2,lin}.

Obviously, for any $f\in L^1_{\mathrm{loc}}(\mathbb H^n)$ and for fixed $N\in\mathbb N$, the maximal function $\mathcal M_{\rho,N}(f)$ is dominated by $\mathcal M(f)$, which implies that the maximal operator $\mathcal M_{\rho,N}$ on $\mathbb H^n$ is also of strong-type $(p,p)$ for $1<p<\infty$, and is of weak-type $(1,1)$. By definition, we only need to show that for any given ball $B=B(u_0,r)$ of $\mathbb H^n$, there is a constant $C>0$ such that
\begin{equation}\label{Main1}
\bigg[\frac{1}{|B|^{\kappa}}\int_B\big|\mathcal M_{\rho,N}f(u)\big|^p\,du\bigg]^{1/p}\leq C\cdot\left[1+\frac{r}{\rho(u_0)}\right]^{\theta}
\end{equation}
holds for given $f\in L^{p,\kappa}_{\rho,\theta}(\mathbb H^n)$ with $(p,\kappa)\in(1,\infty)\times(0,1)$ and $\theta(N_0+1)\leq N$. Using the standard technique, we decompose the function $f$ as
\begin{equation*}
\begin{cases}
f=f_1+f_2\in L^{p,\kappa}_{\rho,\theta}(\mathbb H^n);\  &\\
f_1=f\cdot\chi_{2B};\  &\\
f_2=f\cdot\chi_{(2B)^{\complement}},
\end{cases}
\end{equation*}
where $2B$ is the ball centered at $u_0$ of radius $2r>0$, $\chi_{2B}$ is the characteristic function of $2B$ and $(2B)^{\complement}=\mathbb H^n\backslash(2B)$. Then by the sublinearity of $\mathcal M_{\rho,N}$, we write
\begin{equation*}
\begin{split}
\bigg[\frac{1}{|B|^{\kappa}}\int_B\big|\mathcal M_{\rho,N}f(u)\big|^p\,du\bigg]^{1/p}
&\leq\bigg[\frac{1}{|B|^{\kappa}}\int_B\big|\mathcal M_{\rho,N}f_1(u)\big|^p\,du\bigg]^{1/p}\\
&+\bigg[\frac{1}{|B|^{\kappa}}\int_B\big|\mathcal M_{\rho,N}f_2(u)\big|^p\,du\bigg]^{1/p}:=I_1+I_2.
\end{split}
\end{equation*}
In what follows, we consider each part separately. For the first term $I_1$, by the $(L^p,L^p)$-boundedness of $\mathcal M_{\rho,N}$ for $1<p<\infty$, we have
\begin{equation*}
\begin{split}
I_1&=\bigg[\frac{1}{|B|^{\kappa}}\int_B\big|\mathcal M_{\rho,N}f_1(u)\big|^p\,du\bigg]^{1/p}\\
&\leq C\cdot\frac{1}{|B|^{\kappa/p}}\bigg[\int_{\mathbb H^n}\big|f_1(u)\big|^p\,du\bigg]^{1/p}\\
&=C\cdot\frac{1}{|B|^{\kappa/p}}\bigg[\int_{2B}\big|f(u)\big|^p\,du\bigg]^{1/p}\\
&\leq C\big\|f\big\|_{L^{p,\kappa}_{\rho,\theta}(\mathbb H^n)}\cdot
\frac{|2B|^{\kappa/p}}{|B|^{\kappa/p}}\cdot\left[1+\frac{2r}{\rho(u_0)}\right]^{\theta}.
\end{split}
\end{equation*}
Also observe that for any given $\theta>0$,
\begin{equation}\label{2rx}
1\leq\left[1+\frac{2r}{\rho(u_0)}\right]^{\theta}\leq 2^{\theta}\left[1+\frac{r}{\rho(u_0)}\right]^{\theta}.
\end{equation}
This in turn implies that
\begin{equation*}
\begin{split}
I_1&\leq C_{\theta,n}\big\|f\big\|_{L^{p,\kappa}_{\rho,\theta}(\mathbb H^n)}\left[1+\frac{r}{\rho(u_0)}\right]^{\theta}.
\end{split}
\end{equation*}
Next we estimate the other term $I_2$. By a simple calculation, we know that if $B(u,r')\cap B(u_0,2r)^{\complement}\neq\O$ and $u\in B(u_0,r)$, then
\begin{equation*}
r'>r\quad  \mathrm{and} \quad B(u,r')\cap B(u_0,2r)^{\complement}\subset B(u_0,2r').
\end{equation*}
Indeed, for any $v\in B(u,r')\cap B(u_0,2r)^{\complement}$, one has
\begin{equation*}
\begin{split}
r'&>\big|v^{-1}u\big|=\big|(v^{-1}u_0)\cdot(u_0^{-1}u)\big|\\
&\geq\big|v^{-1}u_0\big|-\big|u_0^{-1}u\big|>2r-r=r.
\end{split}
\end{equation*}
Moreover, for any $v\in B(u,r')\cap B(u_0,2r)^{\complement}$,
\begin{equation*}
\begin{split}
\big|v^{-1}u_0\big|&=\big|(v^{-1}u)\cdot(u^{-1}u_0)\big|\\
&\leq\big|v^{-1}u\big|+\big|u^{-1}u_0\big|<r'+r<2r'.
\end{split}
\end{equation*}
Thus, for any $u\in B(u_0,r)$ and $p\in[1,\infty)$, it follows from H\"older's inequality that
\begin{equation*}
\begin{split}
\mathcal M_{\rho,N}f_2(u)&=\sup_{r'>0}\left[1+\frac{r'}{\rho(u)}\right]^{-N}\frac{1}{|B(u,r')|}\int_{B(u,r')\cap B(u_0,2r)^{\texttt{C}}}|f(v)|\,dv\\
&\leq\sup_{r'>r}\left[1+\frac{r'}{\rho(u)}\right]^{-N}\frac{1}{|B(u,r')|}\int_{B(u_0,2r')}|f(v)|\,dv\\
&\leq\sup_{r'>r}\left[1+\frac{r'}{\rho(u)}\right]^{-N}\frac{2^Q}{|B(u_0,2r')|}\bigg(\int_{B(u_0,2r')}\big|f(v)\big|^p\,dv\bigg)^{1/p}
\bigg(\int_{B(u_0,2r')}1\,dv\bigg)^{1/{p'}}\\
&\leq C\big\|f\big\|_{L^{p,\kappa}_{\rho,\theta}(\mathbb H^n)}\sup_{r'>r}\frac{1}{|B(u_0,2r')|^{{(1-\kappa)}/p}}
\left[1+\frac{r'}{\rho(u)}\right]^{-N}\left[1+\frac{2r'}{\rho(u_0)}\right]^{\theta}.
\end{split}
\end{equation*}
Note that $r'>r$. Then we have $u\in B(u_0,r)\subset B(u_0,r')$. In view of \eqref{com2} with $k=0$ and \eqref{2rx}, we can further obtain
\begin{equation*}
\big|\mathcal M_{\rho,N}f_2(u)\big|
\leq C_{n,\theta}\big\|f\big\|_{L^{p,\kappa}_{\rho,\theta}(\mathbb H^n)}
\sup_{r'>r}\frac{1}{|B(u_0,2r')|^{{(1-\kappa)}/p}}
\left[1+\frac{r'}{\rho(u_0)}\right]^{(-N)\cdot\frac{1}{N_0+1}+\theta}.
\end{equation*}
Thus, by choosing $N$ large enough such that $N\geq\theta(N_0+1)$ and noting that $1-\kappa>0$, then we get
\begin{equation}\label{Mf2}
\big|\mathcal M_{\rho,N}f_2(u)\big|
\leq C_{n,\theta}\big\|f\big\|_{L^{p,\kappa}_{\rho,\theta}(\mathbb H^n)}
\frac{1}{|B(u_0,2r)|^{{(1-\kappa)}/p}}\left[1+\frac{r}{\rho(u_0)}\right]^{(-N)\cdot\frac{1}{N_0+1}+\theta}.
\end{equation}
Therefore, from this pointwise estimate for $\mathcal M_{\rho,N}(f_2)$, it follows that
\begin{equation*}
\begin{split}
I_2&\leq C\big\|f\big\|_{L^{p,\kappa}_{\rho,\theta}(\mathbb H^n)}
\left[1+\frac{r}{\rho(u_0)}\right]^{(-N)\cdot\frac{1}{N_0+1}+\theta}\\
&\leq C\big\|f\big\|_{L^{p,\kappa}_{\rho,\theta}(\mathbb H^n)}
\left[1+\frac{r}{\rho(u_0)}\right]^{\theta}.
\end{split}
\end{equation*}
Summing up the above estimates for $I_1$ and $I_2$, we obtain the desired inequality \eqref{Main1}.
\end{proof}

\begin{proof}[Proof of Theorem $\ref{maxthm:2}$]
To prove Theorem \ref{maxthm:2}, by definition, it suffices to prove that for each given ball $B=B(u_0,r)$ of $\mathbb H^n$, the inequality
\begin{equation}\label{Main2}
\frac{1}{|B|^{\kappa}}\sup_{\lambda>0}\lambda\cdot\big|\big\{u\in B:|\mathcal M_{\rho,N}f(u)|>\lambda\big\}\big|
\leq C\cdot\left[1+\frac{r}{\rho(u_0)}\right]^{\theta}
\end{equation}
holds for any given $f\in L^{1,\kappa}_{\rho,\theta}(\mathbb H^n)$ with $0<\kappa<1$ and $\theta(N_0+1)\leq N$. We decompose the function $f$ as
\begin{equation*}
\begin{cases}
f=f_1+f_2\in L^{1,\kappa}_{\rho,\theta}(\mathbb H^n);\  &\\
f_1=f\cdot\chi_{2B};\  &\\
f_2=f\cdot\chi_{(2B)^{\complement}}.
\end{cases}
\end{equation*}
Then for any given $\lambda>0$, by the sublinearity of $\mathcal M_{\rho,N}$, we can write
\begin{equation*}
\begin{split}
&\frac{1}{|B|^{\kappa}}\lambda\cdot\big|\big\{u\in B:|\mathcal M_{\rho,N}f(u)|>\lambda\big\}\big|\\
&\leq\frac{1}{|B|^{\kappa}}\lambda\cdot\big|\big\{u\in B:|\mathcal M_{\rho,N}f_1(u)|>\lambda/2\big\}\big|\\
&+\frac{1}{|B|^{\kappa}}\lambda\cdot\big|\big\{u\in B:|\mathcal M_{\rho,N}f_2(u)|>\lambda/2\big\}\big|:=J_1+J_2.
\end{split}
\end{equation*}
We first give the estimate for the term $J_1$. By the $(L^1,WL^1)$-boundedness of $\mathcal M_{\rho,N}$, we get
\begin{equation*}
\begin{split}
J_1&=\frac{1}{|B|^{\kappa}}\lambda\cdot\big|\big\{u\in B:|\mathcal M_{\rho,N}f_1(u)|>\lambda/2\big\}\big|\\
&\leq C\cdot\frac{1}{|B|^{\kappa}}\bigg(\int_{\mathbb H^n}\big|f_1(u)\big|\,du\bigg)\\
&=C\cdot\frac{1}{|B|^{\kappa}}\bigg(\int_{2B}\big|f(u)\big|\,du\bigg)\\
&\leq C\big\|f\big\|_{L^{1,\kappa}_{\rho,\theta}(\mathbb H^n)}\cdot\frac{|2B|^{\kappa}}{|B|^{\kappa}}\left[1+\frac{2r}{\rho(u_0)}\right]^{\theta}.
\end{split}
\end{equation*}
Therefore, in view of \eqref{2rx},
\begin{equation*}
J_1\leq C\big\|f\big\|_{L^{1,\kappa}_{\rho,\theta}(\mathbb H^n)}\cdot\left[1+\frac{r}{\rho(u_0)}\right]^{\theta}.
\end{equation*}
As for the second term $J_2$, by using the pointwise inequality \eqref{Mf2} and Chebyshev's inequality, we can deduce that
\begin{equation}\label{Tf2pr}
\begin{split}
J_2&=\frac{1}{|B|^{\kappa}}\lambda\cdot\big|\big\{u\in B:|\mathcal M_{\rho,N}f_2(u)|>\lambda/2\big\}\big|\leq\frac{2}{|B|^{\kappa}}\bigg(\int_{B}\big|\mathcal M_{\rho,N}f_2(u)\big|\,du\bigg)\\
&\leq C\cdot\frac{|B(u_0,r)|}{|B(u_0,r)|^{\kappa}}
\big\|f\big\|_{L^{1,\kappa}_{\rho,\theta}(\mathbb H^n)}\cdot\frac{1}{|B(u_0,2r)|^{{(1-\kappa)}}}
\left[1+\frac{r}{\rho(u_0)}\right]^{(-N)\cdot\frac{1}{N_0+1}+\theta}\\
&\leq C\big\|f\big\|_{L^{1,\kappa}_{\rho,\theta}(\mathbb H^n)}
\left[1+\frac{r}{\rho(u_0)}\right]^{\theta}.
\end{split}
\end{equation}
Summing up the above estimates for $J_1$ and $J_2$, and then taking the supremum over all $\lambda>0$, we obtain the desired inequality \eqref{Main2}. This finishes the proof of Theorem \ref{maxthm:2}.
\end{proof}

Base on Lemma \ref{keylem} and Theorems \ref{maxthm:1} and \ref{maxthm:2}, we are now in a position to prove our main results.
\begin{proof}[Proofs of Theorems $\ref{mainthm:1}$ and $\ref{mainthm:2}$]
For any given $f\in L^{p,\kappa}_{\rho,\infty}(\mathbb H^n)$ with $1\leq p<Q/{\alpha}$ and $0<\kappa<1-{(\alpha p)}/Q$, suppose that $f\in L^{p,\kappa}_{\rho,\theta^*}(\mathbb H^n)$, where
\begin{equation*}
\theta^*=\inf\big\{\theta>0:f\in L^{p,\kappa}_{\rho,\theta}(\mathbb H^n)\big\}\quad\mathrm{and}\quad\big\|f\big\|_{L^{p,\kappa}_{\rho,\infty}(\mathbb H^n)}=\big\|f\big\|_{L^{p,\kappa}_{\rho,\theta^*}(\mathbb H^n)}.
\end{equation*}
Let $N\in\mathbb N$ be large enough such that $N\geq\theta^*(N_0+1)\geq\theta^*$ and $N_0$ is as in \eqref{com}.
(1) For $p>1$, by Lemma \ref{keylem} and Theorem \ref{maxthm:1}, we can deduce that for each given ball $B(u_0,r)$ in $\mathbb H^n$,
\begin{equation*}
\begin{split}
&\bigg[1+\frac{r}{\rho(u_0)}\bigg]^{-\theta^*}
\bigg(\frac{1}{|B(u_0,r)|^{\kappa}}\int_{B(u_0,r)}\big|\mathcal I_{\alpha}f(u)\big|^q\,du\bigg)^{1/q}\\
&\leq C\Big[\big\|f\big\|_{L^{p,\kappa}_{\rho,\infty}(\mathbb H^n)}\Big]^{1-p/q}\bigg[1+\frac{r}{\rho(u_0)}\bigg]^{-\theta^*}
\bigg(\frac{1}{|B(u_0,r)|^{\kappa}}\int_{B(u_0,r)}\big|\mathcal M_{\rho,N}f(u)\big|^p\,du\bigg)^{1/q}\\
&\leq C\Big[\big\|f\big\|_{L^{p,\kappa}_{\rho,\infty}(\mathbb H^n)}\Big]^{1-p/q}\cdot\Big[\big\|f\big\|_{L^{p,\kappa}_{\rho,\infty}(\mathbb H^n)}\Big]^{p/q}
=C\big\|f\big\|_{L^{p,\kappa}_{\rho,\infty}(\mathbb H^n)}.
\end{split}
\end{equation*}
(2) On the other hand, for $p=1$, by Lemma \ref{keylem}, we can deduce that for each given ball $B(u_0,r)$ in $\mathbb H^n$,
\begin{equation*}
\begin{split}
&\left[1+\frac{r}{\rho(u_0)}\right]^{-\theta^*}\frac{1}{|B(u_0,r)|^{\kappa/q}}
\sup_{\lambda>0}\lambda\cdot\big|\big\{u\in B(u_0,r):|\mathcal I_{\alpha}f(u)|>\lambda\big\}\big|^{1/q}\\
&\lesssim\left[1+\frac{r}{\rho(u_0)}\right]^{-\theta^*}\frac{1}{|B(u_0,r)|^{\kappa/q}}\\
&\times\sup_{\lambda>0}\lambda\cdot\bigg|\bigg\{u\in B(u_0,r):\Big[\mathcal M_{\rho,N}f(u)\Big]^{1/q}\cdot\Big[\big\|f\big\|_{L^{1,\kappa}_{\rho,\infty}(\mathbb H^n)}\Big]^{1-1/q}>\lambda\bigg\}\bigg|^{1/q}\\
&=\left[1+\frac{r}{\rho(u_0)}\right]^{-\theta^*}\frac{1}{|B(u_0,r)|^{\kappa/q}}
\times\sup_{\lambda>0}\lambda\cdot\big|\big\{u\in B(u_0,r):\mathcal M_{\rho,N}f(u)>\sigma\big\}\big|^{1/q},
\end{split}
\end{equation*}
where
\begin{equation*}
\sigma:=\frac{\lambda^q}{\big[\big\|f\big\|_{L^{1,\kappa}_{\rho,\infty}(\mathbb H^n)}\big]^{q-1}}.
\end{equation*}
Furthermore, by using Theorem \ref{maxthm:2}, we have
\begin{equation*}
\begin{split}
&\left[1+\frac{r}{\rho(u_0)}\right]^{-\theta^*}\frac{1}{|B(u_0,r)|^{\kappa/q}}
\sup_{\lambda>0}\lambda\cdot\big|\big\{u\in B(u_0,r):|\mathcal I_{\alpha}f(u)|>\lambda\big\}\big|^{1/q}\\
&\leq C\frac{1}{|B(u_0,r)|^{\kappa/q}}\sup_{\lambda>0}\lambda\cdot\bigg(\frac{|B(u_0,r)|^\kappa\big\|f\big\|_{L^{1,\kappa}_{\rho,\infty}(\mathbb H^n)}}{\sigma}\bigg)^{1/q}\\
&=C\Big[\big\|f\big\|_{L^{1,\kappa}_{\rho,\infty}(\mathbb H^n)}\Big]^{1-1/q}\cdot\Big[\big\|f\big\|_{L^{1,\kappa}_{\rho,\infty}(\mathbb H^n)}\Big]^{1/q}
=C\big\|f\big\|_{L^{1,\kappa}_{\rho,\infty}(\mathbb H^n)}.
\end{split}
\end{equation*}
Finally, by taking the supremum over all balls $B(u_0,r)$ in $\mathbb H^n$, we conclude the proof.
\end{proof}

\section{Proof of Theorem \ref{mainthm:3}}\label{sec4}
Assume that $\big\{e^{-s\mathcal L}\big\}_{s>0}$ is the heat semigroup generated by $\mathcal L$ and $P_s(u,v)$ is the kernel of the semigroup $\big\{e^{-s\mathcal L}\big\}_{s>0}$. We need the following lemma which establishes the Lipschitz regularity of the kernel $P_s(u,v)$. See Lemma 11 and Remark 4 in \cite{lin}.
\begin{lem}[\cite{lin}]\label{ker2}
Let $\rho$ be as in \eqref{rho}. Let $V\in RH_s$ with $s\geq Q/2$. For every positive integer $N\in\mathbb N$, there exists a positive constant $C_N>0$ such that for all $u$ and $v$ in $\mathbb H^n$, and for some fixed $0<\delta\leq 1$,
\begin{equation*}
\big|P_s(u\cdot h,v)-P_s(u,v)\big|\leq C_N\bigg(\frac{|h|}{\sqrt{s\,}}\bigg)^{\delta} s^{-Q/2}\exp\bigg(-\frac{|v^{-1}u|^2}{As}\bigg)\bigg[1+\frac{\sqrt{s\,}}{\rho(u)}+\frac{\sqrt{s\,}}{\rho(v)}\bigg]^{-N},
\end{equation*}
whenever $|h|\leq|v^{-1}u|/2$.
\end{lem}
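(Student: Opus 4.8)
Since Lemma~\ref{ker2} is quoted from \cite{lin}, I only outline the route I would take. The goal is to promote the pointwise bound of Lemma~\ref{ker1} to a H\"older estimate in the first variable that keeps the decay factor $[1+\sqrt s/\rho(u)+\sqrt s/\rho(v)]^{-N}$ intact. I would \emph{not} difference the perturbation identity $P_s=H_s-\int_0^s H_{s-\tau}VP_\tau\,d\tau$ term by term, since the free kernel $H_s$ carries no $\rho$-decay and that decay comes only from the cancellation inside the identity; a naive triangle inequality would break the bound once $\sqrt s\gg\rho(u)$. Instead I would use analyticity of the semigroup together with the heat equation, so that every estimate rests on Lemma~\ref{ker1} rather than on $H_s$.

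The first step controls the time derivative. As $\mathcal L=-\Delta_{\mathbb H^n}+V$ is nonnegative and self-adjoint, $\{e^{-s\mathcal L}\}_{s>0}$ is analytic; extending Lemma~\ref{ker1} to complex times in a sector and applying Cauchy's estimate on a circle of radius comparable to $s$ gives
\begin{equation*}
\big|\partial_s P_s(u,v)\big|\leq\frac{C_N}{s}\,s^{-Q/2}\exp\Big(-\frac{|v^{-1}u|^2}{As}\Big)\Big[1+\frac{\sqrt s}{\rho(u)}+\frac{\sqrt s}{\rho(v)}\Big]^{-N},
\end{equation*}
the same Gaussian and $\rho$-decay with one extra factor $s^{-1}$ (after enlarging $A$).

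The second step uses that, for fixed $v$ and $s$, the function $u\mapsto P_s(u,v)$ solves $\partial_sP=\Delta_{\mathbb H^n}P-VP$. On $B(u,\sqrt s)$ the Gaussian factor is comparable to its value at $u$, so the previous display and Lemma~\ref{ker1} bound $\partial_sP$ there, while $VP\in L^s(B(u,\sqrt s))$ since $V\in RH_s$, the local $L^s$ norm of $V$ being controlled through the defining property of $\rho$ and \eqref{com} (its growth in $\sqrt s/\rho$ absorbed by $[\,\cdot\,]^{-N}$ for $N$ large). Hence $\Delta_{\mathbb H^n}P=\partial_sP+VP\in L^s(B(u,\sqrt s))$, and the standard subelliptic Calder\'on--Zygmund theory and Morrey--Sobolev embedding on $\mathbb H^n$ give $P_s(\cdot,v)\in C^{0,\delta}$ on that ball with $\delta=\min\{1,2-Q/s\}$; rescaling by $\sqrt s$ produces the factor $(|h|/\sqrt s)^\delta$ when $|h|<\sqrt s$. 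When $|h|\geq\sqrt s$ this factor already exceeds $1$ and the pointwise estimate of Lemma~\ref{ker1} alone controls the difference, once $A$ is taken a few times larger. The hypothesis $|h|\leq|v^{-1}u|/2$ keeps $u$, $u\cdot h$ and $v$ comparably spaced, so the Gaussian exponents at $u$ and $u\cdot h$ are equivalent and the $\rho$-decay persists as a multiplicative constant inherited from Lemma~\ref{ker1}.

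The main obstacles are twofold. The exponent is only some $\delta\in(0,1]$, not $1$, because it is capped by the integrability of the source $VP$; $\delta=2-Q/s$ is strictly positive precisely because $RH_s$ self-improves to $RH_{s+\varepsilon}$ (effectively forcing $s>Q/2$), which is the origin of the ``fixed $0<\delta\leq1$'' in the statement. More delicate is carrying the $\rho$-decay through each step: since it is never supplied by the free kernel, one must consistently work at the scale $\sqrt s$ and use the comparison inequality \eqref{com} of Lemma~\ref{N0} to exchange $\rho(w)$ at intermediate points for $\rho(u)$ or $\rho(v)$.
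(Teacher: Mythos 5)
This lemma is not proved in the paper at all: it is quoted verbatim from Lin and Liu \cite{lin} (Lemma 11 and Remark 4 there), so there is no in-paper argument to compare against. Judged on its own terms, your sketch is a coherent alternative route --- a local elliptic-regularity argument (subelliptic $W^{2,s}$ estimates plus Morrey--Sobolev embedding at scale $\sqrt s$), in the spirit of Shen's treatment of fundamental solutions --- and it correctly identifies why the exponent is capped at $2-Q/s$ (with $s$ the reverse H\"older exponent) and why the self-improvement of $RH_s$ is what makes $\delta>0$. It is, however, genuinely different from the proof in the cited source, which, as in \cite{dziu}, runs through the Duhamel perturbation identity $P_s(u,v)=H_s(v^{-1}u)-\int_0^s\int_{\mathbb H^n}H_{s-\tau}(w^{-1}u)V(w)P_\tau(w,v)\,dw\,d\tau$ together with the smoothness of the free kernel $H_s$.

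Two points in your outline deserve pushback. First, your reason for rejecting the perturbation identity --- that differencing it would lose the factor $\big[1+\sqrt{s}/\rho(u)+\sqrt{s}/\rho(v)\big]^{-N}$ --- overlooks the standard repair: one first proves the H\"older estimate \emph{without} the decay factor (where the naive differencing is perfectly adequate, since $H_s$ need not supply any $\rho$-decay), and then takes a geometric mean of that bound with the decayed size bound of Lemma \ref{ker1}. Because the statement only asks for \emph{some} fixed $\delta\in(0,1]$ while $N$ is arbitrary, trading half the H\"older exponent for half the decay exponent costs nothing; this interpolation is precisely how the decay factor is recovered in the literature you would be bypassing. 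Second, your first step is asserted rather than proved: the Cauchy-estimate argument requires Lemma \ref{ker1} for complex times in a sector \emph{with the $\rho$-decay intact}, and that complex-time estimate is a statement of essentially the same depth as the one you are trying to avoid establishing via Duhamel. (It can be obtained, e.g.\ by writing $\mathcal L e^{-s\mathcal L}=\big(\mathcal L e^{-(s/2)\mathcal L}\big)e^{-(s/2)\mathcal L}$ and again interpolating, but as written this is a gap.) The remaining steps --- absorbing the local $L^s$ average of $sV$ and the passage from $\rho(w)$ to $\rho(u)$ via \eqref{com} into the $[\,\cdot\,]^{-N}$ factor for $N$ large, and the trivial case $|h|\geq\sqrt{s}$ --- are sound.
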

Based on the above lemma, we are able to prove the following result, which plays a key role in the proof of our main result.

\begin{lem}\label{kernel2}
Let $\rho$ be as in \eqref{rho}. Let $V\in RH_s$ with $s\geq Q/2$ and $0<\alpha<Q$. For every positive integer $N\in\mathbb N$, there exists a positive constant $C_{N,\alpha}>0$ such that for all $u,v$ and $w$ in $\mathbb H^n$, and for some fixed $0<\delta\leq 1$,
\begin{equation}\label{WH2}
\big|\mathcal K_{\alpha}(u,w)-\mathcal K_{\alpha}(v,w)\big|\leq C_{N,\alpha}\bigg[1+\frac{|w^{-1}u|}{\rho(u)}\bigg]^{-N}\frac{|v^{-1}u|^{\delta}}{|w^{-1}u|^{Q-\alpha+\delta}},
\end{equation}
whenever $|v^{-1}u|\leq |w^{-1}u|/2$.
\end{lem}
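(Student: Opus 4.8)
The plan is to reduce the kernel difference to the pointwise Lipschitz bound on the heat kernel $P_s$ supplied by Lemma~\ref{ker2}, and then to integrate in $s$ following exactly the two-region splitting already used in the proof of Lemma~\ref{kernel}.

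First I would rewrite the perturbation as a right translation. Setting $h:=u^{-1}v$, the left-invariance of the distance gives $|h|=|u^{-1}v|=|v^{-1}u|$ and $u\cdot h=v$, so that
\[
P_s(u,w)-P_s(v,w)=-\bigl(P_s(u\cdot h,w)-P_s(u,w)\bigr),
\]
and the hypothesis $|v^{-1}u|\le|w^{-1}u|/2$ is precisely the condition $|h|\le|w^{-1}u|/2$ required to apply Lemma~\ref{ker2} with its second slot taken to be $w$. This yields
\[
\bigl|P_s(u,w)-P_s(v,w)\bigr|\le C_N\Bigl(\frac{|v^{-1}u|}{\sqrt{s}}\Bigr)^{\delta}s^{-Q/2}\exp\Bigl(-\frac{|w^{-1}u|^2}{As}\Bigr)\Bigl[1+\frac{\sqrt{s}}{\rho(u)}+\frac{\sqrt{s}}{\rho(w)}\Bigr]^{-N}.
\]
Inserting this into the kernel representation \eqref{kauv} and discarding the nonnegative $\sqrt{s}/\rho(w)$ term in the bracket, the problem is reduced to bounding
\[
|v^{-1}u|^{\delta}\int_0^{\infty}s^{\alpha/2-Q/2-\delta/2-1}\exp\Bigl(-\frac{|w^{-1}u|^2}{As}\Bigr)\Bigl[1+\frac{\sqrt{s}}{\rho(u)}\Bigr]^{-N}ds.
\]

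Writing $d:=|w^{-1}u|$, I would then split this integral at $s=d^2$. On the range $s>d^2$ one has $\sqrt{s}>d$, so $[1+\sqrt{s}/\rho(u)]^{-N}\le[1+d/\rho(u)]^{-N}$ may be pulled out, and the leftover integral $\int_{d^2}^{\infty}s^{\alpha/2-Q/2-\delta/2-1}\,ds$ converges because $\alpha<Q\le Q+\delta$, producing the factor $d^{\alpha-Q-\delta}$. On the range $0\le s\le d^2$ I would absorb the Gaussian by the elementary bound $\exp(-d^2/(As))\lesssim(s/d^2)^{(Q+\delta+N)/2}$; after simplification the integrand becomes $d^{-(Q+\delta)}(\sqrt{s}/d)^{N}[1+\sqrt{s}/\rho(u)]^{-N}s^{\alpha/2-1}$, and the same inequality $\sqrt{s}/d\le(\sqrt{s}+\rho(u))/(d+\rho(u))$ used in Lemma~\ref{kernel} (valid since $\sqrt{s}\le d$) extracts $[1+d/\rho(u)]^{-N}$, leaving the convergent integral $\int_0^{d^2}s^{\alpha/2-1}\,ds$ and again the power $d^{\alpha-Q-\delta}$. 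Multiplying through by the prefactor $|v^{-1}u|^{\delta}$ in both cases gives exactly the claimed bound~\eqref{WH2}.

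The computation is routine once the reduction is in place; the only genuinely delicate point is the bookkeeping of the extra factor $s^{-\delta/2}$ coming from the Lipschitz exponent in Lemma~\ref{ker2}, which shifts the critical integrability threshold from $\alpha<Q$ to $\alpha<Q+\delta$ and lowers the homogeneity of the resulting kernel from $\alpha-Q$ to $\alpha-Q-\delta$. Since $0<\alpha<Q$ and $0<\delta\le1$ both integrals remain convergent, so no additional restriction on the parameters is needed beyond those already assumed.
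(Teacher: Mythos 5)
Your argument is correct and follows essentially the same route as the paper: apply Lemma \ref{ker2} with $h=u^{-1}v$, drop the $\sqrt{s}/\rho(w)$ term, split the $s$-integral at $s=|w^{-1}u|^2$, and on the small-$s$ range absorb the Gaussian by a power of $s/|w^{-1}u|^2$ and use $\sqrt{s}/|w^{-1}u|\le(\sqrt{s}+\rho(u))/(|w^{-1}u|+\rho(u))$ to extract the bracket factor. The only (immaterial) difference is that on the large-$s$ range the paper first bounds $(|u^{-1}v|/\sqrt{s})^{\delta}$ by $(|u^{-1}v|/|w^{-1}u|)^{\delta}$ and integrates $s^{\alpha/2-Q/2-1}$, whereas you keep the $s^{-\delta/2}$ inside the integral; both yield the same power $|w^{-1}u|^{\alpha-Q-\delta}$.
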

\begin{proof}
For given $\alpha\in(0,Q)$, in view of Lemma \ref{ker2} and \eqref{kauv}, we have
\begin{equation*}
\begin{split}
&\big|\mathcal K_{\alpha}(u,w)-\mathcal K_{\alpha}(v,w)\big|\\
&=\frac{1}{\Gamma(\alpha/2)}\bigg|\int_0^{\infty}P_s(u,w)\,s^{\alpha/2-1}ds-\int_0^{\infty}P_s(v,w)\,s^{\alpha/2-1}ds\bigg|\\
&\leq\frac{1}{\Gamma(\alpha/2)}\int_0^{\infty}\big|P_s(u\cdot(u^{-1}v),w)-P_s(u,w)\big|\,s^{\alpha/2-1}ds\\
&\leq\frac{1}{\Gamma(\alpha/2)}\int_0^{\infty}C_N\cdot\bigg(\frac{|u^{-1}v|}{\sqrt{s\,}}\bigg)^{\delta} s^{-Q/2}\exp\bigg(-\frac{|w^{-1}u|^2}{As}\bigg)\bigg[1+\frac{\sqrt{s\,}}{\rho(u)}+\frac{\sqrt{s\,}}{\rho(w)}\bigg]^{-N}s^{\alpha/2-1}ds\\
&\leq\frac{1}{\Gamma(\alpha/2)}\int_0^{\infty}C_N\cdot\bigg(\frac{|u^{-1}v|}{\sqrt{s\,}}\bigg)^{\delta} s^{-Q/2}\exp\bigg(-\frac{|w^{-1}u|^2}{As}\bigg)\bigg[1+\frac{\sqrt{s\,}}{\rho(u)}\bigg]^{-N}s^{\alpha/2-1}ds.
\end{split}
\end{equation*}
Arguing as in the proof of Lemma \ref{kernel}, consider two cases as below: $\sqrt{s\,}>|w^{-1}u|$ and $0\leq \sqrt{s\,}\leq|w^{-1}u|$. Then the right-hand side of the above expression can be written as $III+IV$, where
\begin{equation*}
III=\frac{1}{\Gamma(\alpha/2)}\int_{|w^{-1}u|^2}^{\infty}\frac{C_N}{s^{Q/2}}\cdot
\bigg(\frac{|u^{-1}v|}{\sqrt{s\,}}\bigg)^{\delta}\exp\bigg(-\frac{|w^{-1}u|^2}{As}\bigg)
\bigg[1+\frac{\sqrt{s\,}}{\rho(u)}\bigg]^{-N}s^{\alpha/2-1}ds,
\end{equation*}
and
\begin{equation*}
IV=\frac{1}{\Gamma(\alpha/2)}\int_0^{|w^{-1}u|^2}\frac{C_N}{s^{Q/2}}\cdot
\bigg(\frac{|u^{-1}v|}{\sqrt{s\,}}\bigg)^{\delta}\exp\bigg(-\frac{|w^{-1}u|^2}{As}\bigg)
\bigg[1+\frac{\sqrt{s\,}}{\rho(u)}\bigg]^{-N}s^{\alpha/2-1}ds.
\end{equation*}
When $\sqrt{s\,}>|w^{-1}u|$, in this case, we can deduce that
\begin{equation*}
\begin{split}
III&\leq\frac{1}{\Gamma(\alpha/2)}\int_{|w^{-1}u|^2}^{\infty}\frac{C_N}{s^{Q/2}}\cdot\bigg(\frac{|u^{-1}v|}{|w^{-1}u|}\bigg)^{\delta}
\exp\bigg(-\frac{|w^{-1}u|^2}{As}\bigg)\bigg[1+\frac{|w^{-1}u|}{\rho(u)}\bigg]^{-N}s^{\alpha/2-1}ds\\
&\leq C_{N,\alpha}\bigg[1+\frac{|w^{-1}u|}{\rho(u)}\bigg]^{-N}\bigg(\frac{|u^{-1}v|}{|w^{-1}u|}\bigg)^{\delta}
\int_{|w^{-1}u|^2}^{\infty}s^{\alpha/2-Q/2-1}ds\\
&=C_{N,\alpha}\bigg[1+\frac{|w^{-1}u|}{\rho(u)}\bigg]^{-N}\frac{|v^{-1}u|^{\delta}}{|w^{-1}u|^{Q-\alpha+\delta}},
\end{split}
\end{equation*}
where the last equality holds since $|u^{-1}v|=|v^{-1}u|$ and $0<\alpha<Q$. On the other hand, when $0\leq \sqrt{s\,}\leq|w^{-1}u|$, we can deduce that
\begin{equation*}
\begin{split}
IV&\leq C_{N,\alpha}\int_0^{|w^{-1}u|^2}\frac{1}{s^{Q/2}}\cdot\bigg(\frac{|u^{-1}v|}{\sqrt{s\,}}\bigg)^{\delta}
\bigg(\frac{|w^{-1}u|^2}{s}\bigg)^{-(Q/2+N/2+\delta/2)}\bigg[1+\frac{\sqrt{s\,}}{\rho(u)}\bigg]^{-N}s^{\alpha/2-1}ds\\
&=C_{N,\alpha}\int_0^{|w^{-1}u|^2}\frac{|u^{-1}v|^{\delta}}{|w^{-1}u|^{Q+\delta}}\bigg(\frac{\sqrt{s\,}}{|w^{-1}u|}\bigg)^{N}
\bigg[1+\frac{\sqrt{s\,}}{\rho(u)}\bigg]^{-N}s^{\alpha/2-1}ds.
\end{split}
\end{equation*}
It is easy to check that if $0\leq s\leq|w^{-1}u|^2$, then
\begin{equation*}
\frac{\sqrt{s\,}}{|w^{-1}u|}\leq\frac{\sqrt{s\,}+\rho(u)}{|w^{-1}u|+\rho(u)}.
\end{equation*}
This in turn implies that
\begin{equation*}
\begin{split}
IV&\leq C_{N,\alpha}\int_0^{|w^{-1}u|^2}\frac{|u^{-1}v|^{\delta}}{|w^{-1}u|^{Q+\delta}}\bigg[\frac{\sqrt{s\,}+\rho(u)}{|w^{-1}u|+\rho(u)}\bigg]^{N}
\bigg[\frac{\sqrt{s\,}+\rho(u)}{\rho(u)}\bigg]^{-N}s^{\alpha/2-1}ds\\
&=C_{N,\alpha}\cdot\frac{|u^{-1}v|^{\delta}}{|w^{-1}u|^{Q+\delta}}\bigg[1+\frac{|w^{-1}u|}{\rho(u)}\bigg]^{-N}\int_0^{|w^{-1}u|^2}s^{\alpha/2-1}ds\\
&=C_{N,\alpha}\bigg[1+\frac{|w^{-1}u|}{\rho(u)}\bigg]^{-N}\frac{|v^{-1}u|^{\delta}}{|w^{-1}u|^{Q-\alpha+\delta}},
\end{split}
\end{equation*}
where the last step holds because $|u^{-1}v|=|v^{-1}u|$ and $\alpha>0$. Combining the estimates of $III$ and $IV$ produces the desired inequality \eqref{WH2} for $\alpha\in(0,Q)$. This concludes the proof of the lemma.
\end{proof}

We are now in a position to give the proof of Theorem $\ref{mainthm:3}$.
\begin{proof}[Proof of Theorem $\ref{mainthm:3}$]
For given $f\in L^{p,\kappa}_{\rho,\infty}(\mathbb H^n)$ with $1\leq p<Q/{\alpha}$ and $1-{(\alpha p)}/Q\leq\kappa<1$, suppose that $f\in L^{p,\kappa}_{\rho,\theta^*}(\mathbb H^n)$, where
\begin{equation*}
\theta^*=\inf\big\{\theta>0:f\in L^{p,\kappa}_{\rho,\theta}(\mathbb H^n)\big\}\quad\mathrm{and}\quad\big\|f\big\|_{L^{p,\kappa}_{\rho,\infty}(\mathbb H^n)}=\big\|f\big\|_{L^{p,\kappa}_{\rho,\theta^*}(\mathbb H^n)}.
\end{equation*}
To prove Theorem $\ref{mainthm:3}$, it suffices to prove that the following inequality
\begin{equation}\label{end1.1}
\frac{1}{|B|^{1+\beta/Q}}\int_B\big|\mathcal I_{\alpha}f(u)-(\mathcal I_{\alpha}f)_B\big|\,du\leq C\cdot\left[1+\frac{r}{\rho(u_0)}\right]^{\vartheta}
\end{equation}
holds for any ball $B=B(u_0,r)$ with $u_0\in\mathbb H^n$ and $r\in(0,\infty)$, where $0<\alpha<Q$ and $(\mathcal I_{\alpha}f)_B$ denotes the average of $\mathcal I_{\alpha}f$ over $B$. Decompose the function $f$ as $f=f_1+f_2$, where $f_1=f\cdot\chi_{4B}$, $f_2=f\cdot\chi_{(4B)^{\complement}}$, $4B=B(u_0,4r)$ and $(4B)^{\complement}=\mathbb H^n\backslash(4B)$. By the linearity of the $\mathcal L$-fractional integral operator $\mathcal I_{\alpha}$, the left-hand side of \eqref{end1.1} can be written as
\begin{equation*}
\begin{split}
&\frac{1}{|B|^{1+\beta/Q}}\int_B\big|\mathcal I_{\alpha}f(u)-(\mathcal I_{\alpha}f)_B\big|\,du\\
&\leq\frac{1}{|B|^{1+\beta/Q}}\int_B\big|\mathcal I_{\alpha}f_1(u)-(\mathcal I_{\alpha}f_1)_B\big|\,du
+\frac{1}{|B|^{1+\beta/Q}}\int_B\big|\mathcal I_{\alpha}f_2(u)-(\mathcal I_{\alpha}f_2)_B\big|\,du\\
&:=K_1+K_2.
\end{split}
\end{equation*}
Let us consider the first term $K_1$. Applying Lemma \ref{kernel} and Fubini's theorem, we obtain
\begin{equation*}
\begin{split}
K_1&\leq\frac{2}{|B|^{1+\beta/Q}}\int_B|\mathcal I_{\alpha}f_1(u)|\,du\\
&\leq\frac{C}{|B|^{1+\beta/Q}}\int_B\bigg[\int_{4B}\bigg[1+\frac{|v^{-1}u|}{\rho(u)}\bigg]^{-N}\frac{1}{|v^{-1}u|^{Q-\alpha}}\cdot|f(v)|\,dv\bigg]\,du\\
&=\frac{C}{|B|^{1+\beta/Q}}\int_{4B}\bigg[\int_{B}\bigg[1+\frac{|v^{-1}u|}{\rho(u)}\bigg]^{-N}\frac{1}{|v^{-1}u|^{Q-\alpha}}du\bigg]\,|f(v)|\,dv\\
&\leq\frac{C}{|B|^{1+\beta/Q}}\int_{4B}\bigg[\int_{|v^{-1}u|<5r}\frac{1}{|v^{-1}u|^{Q-\alpha}}du\bigg]\,|f(v)|\,dv.
\end{split}
\end{equation*}
Hence, it follows from \eqref{radial} and H\"older's inequality that
\begin{equation*}
\begin{split}
K_1&\leq\frac{C}{|B|^{1+\beta/Q}}\int_{4B}\bigg[\int_{0}^{5r}\frac{1}{\varrho^{Q-\alpha}}\varrho^{Q-1}d\varrho\bigg]\,|f(v)|\,dv\\
&\leq\frac{C|B|^{\alpha/Q}}{|B|^{1+\beta/Q}}\bigg(\int_{4B}|f(v)|^p\,dv\bigg)^{1/p}
\bigg(\int_{4B}1\,dv\bigg)^{1/{p'}}\\
&\leq C\big\|f\big\|_{L^{p,\kappa}_{\rho,\theta^*}(\mathbb H^n)}
\cdot\frac{|B(u_0,4r)|^{{\kappa}/p+1/{p'}}}{|B(u_0,r)|^{1+\beta/Q-\alpha/Q}}\left[1+\frac{4r}{\rho(u_0)}\right]^{\theta^*}.
\end{split}
\end{equation*}
Using the inequalities \eqref{homonorm} and \eqref{2rx}, and noting the fact that $\beta/Q=\alpha/Q-{(1-\kappa)}/p$, we derive
\begin{equation*}
\begin{split}
K_1&\leq C_n\big\|f\big\|_{L^{p,\kappa}_{\rho,\theta^*}(\mathbb H^n)}\left[1+\frac{4r}{\rho(u_0)}\right]^{\theta^*}\\
&\leq C_{n,\theta}\big\|f\big\|_{L^{p,\kappa}_{\rho,\infty}(\mathbb H^n)}\left[1+\frac{r}{\rho(u_0)}\right]^{\theta^*}.
\end{split}
\end{equation*}
Let us now turn to estimate the second term $K_2$. For any $u\in B(u_0,r)$,
\begin{equation*}
\begin{split}
\big|\mathcal I_{\alpha}f_2(u)-(\mathcal I_{\alpha}f_2)_B\big|
&=\bigg|\frac{1}{|B|}\int_B\big[\mathcal I_{\alpha}f_2(u)-\mathcal I_{\alpha}f_2(v)\big]\,dv\bigg|\\
&=\bigg|\frac{1}{|B|}\int_B\bigg\{\int_{(4B)^{\complement}}\Big[\mathcal K_{\alpha}(u,w)-\mathcal K_{\alpha}(v,w)\Big]f(w)\,dw\bigg\}dv\bigg|\\
&\leq\frac{1}{|B|}\int_B\bigg\{\int_{(4B)^{\complement}}\big|\mathcal K_{\alpha}(u,w)-\mathcal K_{\alpha}(v,w)\big|\cdot|f(w)|\,dw\bigg\}dv.
\end{split}
\end{equation*}
It is clear that when $u,v\in B$ and $w\in(4B)^{\complement}$, we have
\begin{equation*}
|v^{-1}u|\leq |w^{-1}u|/2 \quad\mathrm{and}\quad |w^{-1}u|\approx |w^{-1}u_0|.
\end{equation*}
This fact along with Lemma \ref{kernel2} implies that
\begin{align*}
&\big|\mathcal I_{\alpha}f_2(u)-(\mathcal I_{\alpha}f_2)_B\big|\notag\\
&\leq\frac{C_{N,\alpha}}{|B|}\int_B\bigg\{\int_{(4B)^{\complement}}\left[1+\frac{|w^{-1}u|}{\rho(u)}\right]^{-N}
\frac{|v^{-1}u|^{\delta}}{|w^{-1}u|^{Q-\alpha+\delta}}\cdot|f(w)|\,dw\bigg\}dv\notag\\
&\leq C_{N,\alpha,n}\int_{(4B)^{\complement}}\left[1+\frac{|w^{-1}u_0|}{\rho(u)}\right]^{-N}\frac{r^{\delta}}{|w^{-1}u_0|^{Q-\alpha+\delta}}\cdot|f(w)|\,dw\notag\\
&=C_{N,\alpha,n}\sum_{k=2}^\infty\int_{2^kr\leq|w^{-1}u_0|<2^{k+1}r}
\left[1+\frac{|w^{-1}u_0|}{\rho(u)}\right]^{-N}\frac{r^{\delta}}{|w^{-1}u_0|^{Q-\alpha+\delta}}\cdot|f(w)|\,dw\notag\\
&\leq C_{N,\alpha,n}\sum_{k=2}^\infty\frac{1}{2^{k\delta}}\cdot\frac{1}{|B(u_0,2^{k+1}r)|^{1-({\alpha}/Q)}}
\int_{B(u_0,2^{k+1}r)}\left[1+\frac{2^kr}{\rho(u)}\right]^{-N}|f(w)|\,dw\notag.
\end{align*}
Furthermore, by using H\"older's inequality and \eqref{com2}, we deduce that for any $u\in B(u_0,r)$,
\begin{align}\label{end1.3}
&\big|\mathcal I_{\alpha}f_2(u)-(\mathcal I_{\alpha}f_2)_B\big|\notag\\
&\leq C\sum_{k=2}^\infty\frac{1}{2^{k\delta}}\cdot\frac{1}{|B(u_0,2^{k+1}r)|^{1-({\alpha}/Q)}}
\left[1+\frac{r}{\rho(u_0)}\right]^{N\cdot\frac{N_0}{N_0+1}}
\left[1+\frac{2^{k+1}r}{\rho(u_0)}\right]^{-N}\notag\\
&\times\bigg(\int_{B(u_0,2^{k+1}r)}\big|f(w)\big|^p\,dw\bigg)^{1/p}
\left(\int_{B(u_0,2^{k+1}r)}1\,dw\right)^{1/{p'}}\notag\\
&\leq C\big\|f\big\|_{L^{p,\kappa}_{\rho,\theta^*}(\mathbb H^n)}
\sum_{k=2}^\infty\frac{1}{2^{k\delta}}\cdot\left[1+\frac{r}{\rho(u_0)}\right]^{N\cdot\frac{N_0}{N_0+1}}
\left[1+\frac{2^{k+1}r}{\rho(u_0)}\right]^{-N}\notag\\
&\times\frac{|B(u_0,2^{k+1}r)|^{{\kappa}/p+1/{p'}}}{|B(u_0,2^{k+1}r)|^{1-\alpha/Q}}
\left[1+\frac{2^{k+1}r}{\rho(u_0)}\right]^{\theta^*}\notag\\
&=C\big\|f\big\|_{L^{p,\kappa}_{\rho,\theta^*}(\mathbb H^n)}
\sum_{k=2}^\infty\frac{|B(u_0,2^{k+1}r)|^{\beta/Q}}{2^{k\delta}}\cdot\left[1+\frac{r}{\rho(u_0)}\right]^{N\cdot\frac{N_0}{N_0+1}}
\left[1+\frac{2^{k+1}r}{\rho(u_0)}\right]^{-N+\theta^*},
\end{align}
where the last equality is due to the assumption $\beta/Q=\alpha/Q-{(1-\kappa)}/p$. From the pointwise estimate \eqref{end1.3} and \eqref{homonorm}, it readily follows that
\begin{equation*}
\begin{split}
K_2&\leq C\big\|f\big\|_{L^{p,\kappa}_{\rho,\theta^*}(\mathbb H^n)}
\sum_{k=2}^\infty\frac{1}{2^{k\delta}}\cdot\left(\frac{|B(u_0,2^{k+1}r)|}{|B(u_0,r)|}\right)^{\beta/Q}
\left[1+\frac{r}{\rho(u_0)}\right]^{N\cdot\frac{N_0}{N_0+1}}
\left[1+\frac{2^{k+1}r}{\rho(u_0)}\right]^{-N+\theta^*}\\
&\leq C\big\|f\big\|_{L^{p,\kappa}_{\rho,\infty}(\mathbb H^n)}
\sum_{k=2}^\infty\frac{1}{2^{k(\delta-\beta)}}\cdot
\left[1+\frac{r}{\rho(u_0)}\right]^{N\cdot\frac{N_0}{N_0+1}},
\end{split}
\end{equation*}
where $N\in\mathbb N$ is chosen sufficiently large such that $N\geq\theta^*$. Also observe that $\beta<\delta\leq1$, and hence the last series is convergent. Therefore,
\begin{equation*}
K_2\leq C\big\|f\big\|_{L^{p,\kappa}_{\rho,\infty}(\mathbb H^n)}\left[1+\frac{r}{\rho(u_0)}\right]^{N\cdot\frac{N_0}{N_0+1}}.
\end{equation*}
Fix this $N$ and set $\vartheta=\max\big\{\theta^*,N\cdot\frac{N_0}{N_0+1}\big\}$. Finally, combining the above estimates for $K_1$ and $K_2$, the inequality \eqref{end1.1} is proved and then the proof of Theorem \ref{mainthm:3} is finished.
\end{proof}

In the end of this article, we discuss the corresponding estimates of the fractional integral operator $I_{\alpha}=(-\Delta_{\mathbb H^n})^{-\alpha/2}$ for $0<\alpha<Q$. We denote by $K^{*}_{\alpha}(u,v)$ the kernel of the operator $I_{\alpha}$. In \eqref{claim}, we have already proved that
\begin{equation}\label{WH3}
\big|K^{*}_{\alpha}(u,v)\big|\leq C_{\alpha,n}\cdot\frac{1}{|v^{-1}u|^{Q-\alpha}}.
\end{equation}
Using the same methods and steps as we deal with \eqref{WH2} in Lemma \ref{kernel2}, we can also prove that for some fixed $0<\delta\leq 1$ and $0<\alpha<Q$, there exists a positive constant $C_{\alpha,n}>0$ such that for all $u,v$ and $w$ in $\mathbb H^n$,
\begin{equation}\label{WH4}
\big|K^{*}_{\alpha}(u,w)-K^{*}_{\alpha}(v,w)\big|\leq C_{\alpha,n}\cdot\frac{|v^{-1}u|^{\delta}}{|w^{-1}u|^{Q-\alpha+\delta}},
\end{equation}
whenever $|v^{-1}u|\leq |w^{-1}u|/2$. In view of the inequalities \eqref{WH3} and \eqref{WH4}, using the same arguments as in the proofs of Theorems \ref{mainthm:1} through \ref{mainthm:3}, we can establish the following estimates for $I_{\alpha}$, $\alpha\in(0,Q)$ on the Morrey spaces $L^{p,\kappa}(\mathbb H^n)$.

\begin{thm}\label{thm:1}
Let $0<\alpha<Q$, $1<p<Q/{\alpha}$, $0<\kappa<1-{(\alpha p)}/Q$ and $1/q=1/p-{\alpha}/{Q(1-\kappa)}$. Then the fractional integral operator $I_{\alpha}$ is bounded from $L^{p,\kappa}(\mathbb H^n)$ into $L^{q,{\kappa}}(\mathbb H^n)$.
\end{thm}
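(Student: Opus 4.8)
The plan is to prove the Adams inequality for $I_\alpha$ on the classical Morrey space by the same two-ingredient scheme used for Theorem \ref{mainthm:1}, only now stripped of the auxiliary weight $\rho$ (equivalently, taking $\theta=0$ throughout). The two ingredients are a pointwise Adams-type domination of $I_\alpha f$ by a power of the Hardy-Littlewood maximal function $\mathcal M f$, together with the boundedness of $\mathcal M$ on $L^{p,\kappa}(\mathbb H^n)$.

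First I would establish the pointwise estimate
\[
|I_\alpha f(u)| \lesssim \big[\mathcal M f(u)\big]^{p/q}\,\big\|f\big\|_{L^{p,\kappa}(\mathbb H^n)}^{1-p/q},\quad u\in\mathbb H^n,
\]
which is the $\rho$-free analogue of Lemma \ref{keylem}. Starting from the kernel bound \eqref{WH3}, I split the defining integral at a radius $\sigma>0$ into the regions $|v^{-1}u|<\sigma$ and $|v^{-1}u|\ge\sigma$. Decomposing the first region into dyadic annuli and using the definition of $\mathcal M$, the near part is bounded by $\lesssim\sigma^\alpha\,\mathcal M f(u)$. For the far part I apply H\"older's inequality on each annulus $2^{j-1}\sigma\le|v^{-1}u|<2^{j}\sigma$ and insert the Morrey bound $\big(\int_{B(u,2^{j}\sigma)}|f|^p\big)^{1/p}\le\|f\|_{L^{p,\kappa}}\,|B(u,2^{j}\sigma)|^{\kappa/p}$; summing the resulting geometric series, whose general term is proportional to $(2^{j}\sigma)^{\alpha-Q(1-\kappa)/p}$, gives the far part $\lesssim\sigma^{\alpha-Q(1-\kappa)/p}\,\|f\|_{L^{p,\kappa}}$. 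Balancing the two terms by the choice $\sigma^{Q(1-\kappa)/p}=\|f\|_{L^{p,\kappa}}/\mathcal M f(u)$, and invoking the relation $1/q=1/p-\alpha/(Q(1-\kappa))$ in the form $\alpha p/(Q(1-\kappa))=(1/p-1/q)p$, produces the displayed inequality.

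Next I would invoke the boundedness of $\mathcal M$ on $L^{p,\kappa}(\mathbb H^n)$ for $1<p<\infty$ and $0<\kappa<1$; this is the $V\equiv0$ (i.e.\ $\theta=0$) counterpart of Theorem \ref{maxthm:1}, already available from Guliyev et al.\ \cite{guliyev}. Raising the pointwise bound to the $q$-th power and integrating over an arbitrary ball $B$ yields $\frac{1}{|B|^{\kappa}}\int_B|I_\alpha f|^q\lesssim\|f\|_{L^{p,\kappa}}^{q-p}\cdot\frac{1}{|B|^{\kappa}}\int_B[\mathcal M f]^p$; applying the Morrey estimate for $\mathcal M f$ to the remaining factor gives $\frac{1}{|B|^{\kappa}}\int_B|I_\alpha f|^q\lesssim\|f\|_{L^{p,\kappa}}^{q}$. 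Taking the $1/q$-th power and the supremum over all balls $B$ then gives $\|I_\alpha f\|_{L^{q,\kappa}(\mathbb H^n)}\lesssim\|f\|_{L^{p,\kappa}(\mathbb H^n)}$, as required.

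I expect the only delicate point to be the far-part estimate: one must track the exponents in the geometric series carefully and confirm that the summability condition is precisely the hypothesis $\kappa<1-(\alpha p)/Q$, equivalently $(1-\kappa)/p>\alpha/Q$, and that the optimization in $\sigma$ combined with the Sobolev relation for $q$ reproduces exactly the powers $p/q$ and $1-p/q$. Everything else is mechanical and strictly simpler than the Schr\"odinger case treated in Theorem \ref{mainthm:1}, since both the kernel factor $[1+|v^{-1}u|/\rho(u)]^{-N}$ and the weighted maximal operator $\mathcal M_{\rho,N}$ are absent here.
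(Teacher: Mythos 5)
Your proposal is correct and follows essentially the same route the paper intends for this theorem: the paper derives it by repeating the argument of Theorem \ref{mainthm:1} with the kernel bound \eqref{WH3} in place of \eqref{WH1}, which collapses Lemma \ref{keylem} to the classical Hedberg-type estimate $|I_\alpha f(u)|\lesssim[\mathcal M f(u)]^{p/q}\|f\|_{L^{p,\kappa}}^{1-p/q}$ and Theorem \ref{maxthm:1} to the Morrey boundedness of the Hardy--Littlewood maximal operator, exactly as you describe. Your exponent bookkeeping (summability from $\kappa<1-(\alpha p)/Q$ and the balance $\sigma^{Q(1-\kappa)/p}=\|f\|_{L^{p,\kappa}}/\mathcal Mf(u)$) matches the paper's computation.
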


\begin{thm}\label{thm:2}
Let $0<\alpha<Q$, $p=1$, $0<\kappa<1-\alpha/Q$ and $1/q=1-{\alpha}/{Q(1-\kappa)}$. Then the fractional integral operator $I_{\alpha}$ is bounded from $L^{1,\kappa}(\mathbb H^n)$ into $WL^{q,\kappa }(\mathbb H^n)$.
\end{thm}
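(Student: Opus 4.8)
The plan is to treat Theorems \ref{thm:1} and \ref{thm:2} as the unweighted ($V\equiv0$, $\rho\equiv\infty$) specializations of Theorems \ref{mainthm:1} and \ref{mainthm:2}. Indeed, the kernel bounds \eqref{WH3} and \eqref{WH4} are precisely Lemmas \ref{kernel} and \ref{kernel2} with the critical-radius decay factor $[1+|v^{-1}u|/\rho(u)]^{-N}$ suppressed, and by Remark (i) after the definitions the spaces $L^{p,\kappa}(\mathbb H^n)$ and $WL^{q,\kappa}(\mathbb H^n)$ are the $\theta=0$ instances of the Schr\"odinger--Morrey spaces. Consequently the whole argument goes through verbatim, only simpler, since there is no auxiliary parameter $\theta$ to carry and the harmless factors $[1+\cdots]^{-N}$ may everywhere be dominated by $1$.

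First I would prove the Adams-type pointwise inequality
\begin{equation*}
|I_\alpha f(u)|\lesssim\big[\mathcal Mf(u)\big]^{p/q}\cdot\big\|f\big\|_{L^{p,\kappa}(\mathbb H^n)}^{1-p/q},\quad u\in\mathbb H^n,
\end{equation*}
where $\mathcal M$ is the Hardy--Littlewood maximal operator on $\mathbb H^n$. This is the exact analogue of Lemma \ref{keylem}: using \eqref{WH3} one bounds $|I_\alpha f(u)|$ by $C(|f|*|\cdot|^{\alpha-Q})(u)$ and splits the integral at radius $\sigma$. The near part $\{|v^{-1}u|<\sigma\}$ is controlled, via a geometric dyadic sum, by $\sigma^{\alpha}\mathcal Mf(u)$, while for the far part $\{|v^{-1}u|\ge\sigma\}$ H\"older's inequality and the defining Morrey bound give $\sigma^{\alpha+Q(\kappa-1)/p}\|f\|_{L^{p,\kappa}}$; here the series converges exactly because $(1-\kappa)/p>\alpha/Q$, i.e.\ $\kappa<1-(\alpha p)/Q$. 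Optimizing in $\sigma$ through $\sigma^{Q(1-\kappa)/p}=\|f\|_{L^{p,\kappa}}/\mathcal Mf(u)$ and invoking $1/q=1/p-\alpha/(Q(1-\kappa))$ yields the displayed inequality.

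The remaining ingredient is the mapping of $\mathcal M$ on the classical Morrey spaces over $\mathbb H^n$: namely, $\mathcal M$ is bounded on $L^{p,\kappa}(\mathbb H^n)$ for $1<p<\infty$ and bounded from $L^{1,\kappa}(\mathbb H^n)$ into $WL^{1,\kappa}(\mathbb H^n)$. This is the $\theta=0$ case of Theorems \ref{maxthm:1} and \ref{maxthm:2} (equivalently, the result of Guliyev et al.\ \cite{guliyev}). For Theorem \ref{thm:1} ($p>1$) I would raise the pointwise bound to the $q$-th power, average over a ball $B$, and factor, using $q-p=(1-p/q)q$:
\begin{align*}
\Big(\tfrac{1}{|B|^{\kappa}}\!\int_B|I_\alpha f|^q\,du\Big)^{1/q}
&\lesssim\|f\|_{L^{p,\kappa}}^{1-p/q}\Big(\tfrac{1}{|B|^{\kappa}}\!\int_B[\mathcal Mf]^{p}\,du\Big)^{1/q}\\
&\lesssim\|f\|_{L^{p,\kappa}}^{1-p/q}\,\|\mathcal Mf\|_{L^{p,\kappa}}^{p/q}\lesssim\|f\|_{L^{p,\kappa}},
\end{align*}
and then take the supremum over all balls. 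For Theorem \ref{thm:2} ($p=1$) I would argue at the level of distribution functions: the pointwise bound forces the inclusion $\{u\in B:|I_\alpha f(u)|>\lambda\}\subset\{u\in B:\mathcal Mf(u)>\sigma\}$ with $\sigma=\lambda^q/\|f\|_{L^{1,\kappa}}^{q-1}$, and the weak-type Morrey estimate for $\mathcal M$ then gives $\lambda\,|\{u\in B:|I_\alpha f|>\lambda\}|^{1/q}\lesssim|B|^{\kappa/q}\|f\|_{L^{1,\kappa}}$, uniformly in $\lambda$ and $B$, which is exactly the $WL^{q,\kappa}$ bound.

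I expect no genuine obstacle here: the critical-radius factors in the Schr\"odinger kernels only ever served to absorb growth in $r/\rho(u_0)$, so discarding them makes every estimate strictly easier, and no new idea beyond Adams' splitting is required. The only points deserving care are verifying that the far-part series still converges under $\kappa<1-(\alpha p)/Q$ (it does, with the same exponent $\alpha+Q(\kappa-1)/p$ as before) and citing the correct classical-Morrey mapping of $\mathcal M$ on $\mathbb H^n$; note that the kernel regularity \eqref{WH4} is \emph{not} needed for Theorems \ref{thm:1}--\ref{thm:2} and is instead reserved for the Campanato/BMO analogue modeled on Theorem \ref{mainthm:3}.
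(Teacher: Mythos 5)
Your proposal is correct and follows essentially the same route as the paper, which proves Theorem \ref{thm:2} by running the arguments of Lemma \ref{keylem} and Theorems \ref{maxthm:2}, \ref{mainthm:2} with the kernel bound \eqref{WH3} in place of Lemma \ref{kernel} (i.e.\ the $\theta=0$, $\rho\equiv\infty$ specialization you describe). The Hedberg-type splitting at radius $\sigma$, the optimization $\sigma^{Q(1-\kappa)}=\|f\|_{L^{1,\kappa}}/\mathcal Mf(u)$, and the reduction to the weak-type Morrey bound for $\mathcal M$ via the inclusion of level sets all match the paper's argument.
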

\begin{rem}
We point out that Theorems \ref{thm:1} and \ref{thm:2} was already obtained by Guliyev et al. in \cite[Theorem A]{guliyev}.
\end{rem}
\begin{thm}\label{thm:3}
Let $0<\alpha<Q$, $1\leq p<Q/{\alpha}$ and $1-{(\alpha p)}/Q\leq\kappa<1$. Then the fractional integral operator $I_{\alpha}$ is bounded from $L^{p,\kappa}(\mathbb H^n)$ into $\mathcal{C}^{\beta}(\mathbb H^n)$ with $\beta/Q=\alpha/Q-{(1-\kappa)}/p$ and $\beta<\delta\leq1$, where $\delta$ is given as in \eqref{WH4}.
\end{thm}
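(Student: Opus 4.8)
The plan is to verify directly the Campanato-type inequality defining $\mathcal{C}^{\beta}(\mathbb H^n)$, namely to show that
$$\frac{1}{|B|^{1+\beta/Q}}\int_B\big|I_{\alpha}f(u)-(I_{\alpha}f)_B\big|\,du\leq C\big\|f\big\|_{L^{p,\kappa}(\mathbb H^n)}$$
holds for every ball $B=B(u_0,r)$. The argument parallels the proof of Theorem \ref{mainthm:3}, but is considerably simpler, because the classical Morrey space carries no auxiliary weight $[1+r/\rho(u_0)]^{\theta}$ and the kernel estimates \eqref{WH3} and \eqref{WH4} have no $\rho$-factor. Consequently, every use of \eqref{com2} and \eqref{2rx} and every $\rho$-weight appearing in that proof is simply discarded. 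As before, I would split $f=f_1+f_2$ with $f_1=f\cdot\chi_{4B}$ and $f_2=f\cdot\chi_{(4B)^{\complement}}$, and bound the left-hand side by $K_1+K_2$, the two contributions associated with $f_1$ and $f_2$.

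For the local term $K_1$, I would discard the mean value and estimate $K_1\leq\frac{2}{|B|^{1+\beta/Q}}\int_B|I_{\alpha}f_1(u)|\,du$. Inserting the size bound \eqref{WH3}, applying Fubini's theorem, and using the radial integration formula \eqref{radial} to compute $\int_{|v^{-1}u|<5r}|v^{-1}u|^{\alpha-Q}\,du\approx r^{\alpha}$, H\"older's inequality then yields $K_1\lesssim|B|^{\alpha/Q-1-\beta/Q}\,\big\|f\big\|_{L^{p,\kappa}}\,|4B|^{\kappa/p+1/p'}$. Since $\kappa/p+1/p'=1-(1-\kappa)/p$ and $\beta/Q=\alpha/Q-(1-\kappa)/p$, the powers of $r$ cancel exactly by \eqref{homonorm}, leaving $K_1\lesssim\big\|f\big\|_{L^{p,\kappa}(\mathbb H^n)}$.

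For the far term $K_2$, I would exploit the regularity estimate \eqref{WH4}: for $u,v\in B$ and $w\in(4B)^{\complement}$ one has $|v^{-1}u|\leq|w^{-1}u|/2$ and $|w^{-1}u|\approx|w^{-1}u_0|$, so a pointwise bound for $|I_{\alpha}f_2(u)-(I_{\alpha}f_2)_B|$ follows by writing the difference as an average over $B$ and summing over the dyadic annuli $2^kr\leq|w^{-1}u_0|<2^{k+1}r$ with $k\geq2$. On each annulus the factor $r^{\delta}/|w^{-1}u_0|^{Q-\alpha+\delta}$ produces a gain $2^{-k\delta}$, while H\"older's inequality together with the Morrey norm contributes $|B(u_0,2^{k+1}r)|^{\kappa/p+1/p'}$; after dividing by $|B|^{1+\beta/Q}$ and again invoking $\beta/Q=\alpha/Q-(1-\kappa)/p$, each term is controlled by $2^{-k(\delta-\beta)}\big\|f\big\|_{L^{p,\kappa}}$. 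The hypothesis $\beta<\delta$ renders this geometric series convergent, giving $K_2\lesssim\big\|f\big\|_{L^{p,\kappa}(\mathbb H^n)}$.

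The only delicate point is the bookkeeping of the scaling exponents, where the identity $\beta/Q=\alpha/Q-(1-\kappa)/p$ is used twice to force the exact cancellation of the powers of $r$; this balance is precisely what places $I_{\alpha}f$ in $\mathcal{C}^{\beta}$ rather than in a different Campanato class, and $\beta<\delta$ is exactly what guarantees summability of the annular tail. Combining the bounds for $K_1$ and $K_2$ and taking the supremum over all balls $B$ then completes the proof.
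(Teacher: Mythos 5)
Your proposal is correct and follows exactly the route the paper intends: the paper proves Theorem \ref{thm:3} by citing the kernel bounds \eqref{WH3} and \eqref{WH4} and repeating the argument of Theorem \ref{mainthm:3} with all $\rho$-weights stripped out, which is precisely your $f=f_1+f_2$ decomposition, the Fubini/radial-integration bound for $K_1$, and the dyadic-annulus estimate with the $2^{-k(\delta-\beta)}$ series for $K_2$. The exponent bookkeeping via $\beta/Q=\alpha/Q-(1-\kappa)/p$ and the summability condition $\beta<\delta$ are handled exactly as in the paper.
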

As an immediate consequence we have the following corollary.
\begin{cor}
Let $0<\alpha<Q$, $1\leq p<Q/{\alpha}$ and $\alpha p=(1-\kappa)Q$. Then the fractional integral operator $I_{\alpha}$ is bounded from $L^{p,\kappa}(\mathbb H^n)$ into $\mathrm{BMO}(\mathbb H^n)$.
\end{cor}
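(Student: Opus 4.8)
The plan is to derive this corollary as the endpoint case $\beta=0$ of Theorem \ref{thm:3}, rather than to run a separate argument. First I would rewrite the hypothesis $\alpha p=(1-\kappa)Q$ as $\kappa=1-(\alpha p)/Q$, which is exactly the left endpoint of the admissible range $1-(\alpha p)/Q\le\kappa<1$ in Theorem \ref{thm:3}, and equivalently $(1-\kappa)/p=\alpha/Q$. Substituting into the relation $\beta/Q=\alpha/Q-(1-\kappa)/p$ then forces $\beta=0$. The admissibility requirement $\beta<\delta\le1$ of Theorem \ref{thm:3} holds automatically, since $\delta\in(0,1]$ gives $0=\beta<\delta$.

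With $\beta=0$ the target space $\mathcal{C}^{\beta}(\mathbb H^n)$ is $\mathcal{C}^{0}(\mathbb H^n)$, and by the identification recorded earlier (the Campanato space with exponent $\beta=0$ coincides with $\mathrm{BMO}$, specialized to $V\equiv0$) we have $\mathcal{C}^{0}(\mathbb H^n)=\mathrm{BMO}(\mathbb H^n)$. Hence Theorem \ref{thm:3} already states that $I_{\alpha}$ maps $L^{p,\kappa}(\mathbb H^n)$ boundedly into $\mathcal{C}^{0}(\mathbb H^n)=\mathrm{BMO}(\mathbb H^n)$, which is the assertion.

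For completeness one may instead argue directly, mirroring the proof of Theorem \ref{mainthm:3} with $\beta=0$ and using the size and regularity bounds \eqref{WH3} and \eqref{WH4} in place of Lemmas \ref{kernel} and \ref{kernel2}. Fixing a ball $B=B(u_0,r)$ and splitting $f=f\chi_{4B}+f\chi_{(4B)^{\complement}}=:f_1+f_2$, the local term $\frac{1}{|B|}\int_B|I_{\alpha}f_1|\,du$ is controlled by \eqref{WH3} together with the radial formula \eqref{radial} and H\"older's inequality; the exponent identity $\kappa/p+1/p'=1-\alpha/Q$ makes every power of $|B|$ cancel. The oscillation term $\frac{1}{|B|}\int_B|I_{\alpha}f_2-(I_{\alpha}f_2)_B|\,du$ is handled by the Lipschitz bound \eqref{WH4}, decomposing $(4B)^{\complement}$ into dyadic annuli $2^kr\le|w^{-1}u_0|<2^{k+1}r$ and summing; at $\beta=0$ the resulting series degenerates to $\sum_k 2^{-k\delta}$, which converges since $\delta>0$.

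The point worth emphasizing is that there is essentially no obstacle: the corollary is a boundary specialization, so the only things to verify are that the endpoint $\beta=0$ is genuinely covered by Theorem \ref{thm:3} (it is, because $\delta>0$) and that the $\beta=0$ Campanato space is literally $\mathrm{BMO}(\mathbb H^n)$. In the direct approach the sole quantitative check is the convergence of the dyadic sum, which remains summable precisely because $\delta>0$ even when $\beta=0$.
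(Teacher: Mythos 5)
Your proposal is correct and matches the paper's route exactly: the corollary is obtained by specializing Theorem \ref{thm:3} to $\kappa=1-(\alpha p)/Q$, which forces $\beta=0$, noting that $0=\beta<\delta$ is automatic and that $\mathcal{C}^{0}(\mathbb H^n)=\mathrm{BMO}(\mathbb H^n)$. The supplementary direct argument you sketch is just the $\beta=0$ instance of the proof of Theorem \ref{mainthm:3} with \eqref{WH3} and \eqref{WH4} in place of Lemmas \ref{kernel} and \ref{kernel2}, which is also how the paper frames Theorem \ref{thm:3} itself.
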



\begin{thebibliography}{99}


\normalsize
\baselineskip=17pt

\bibitem{adams}
D. R. Adams, \emph{A note on Riesz potentials}, Duke Math. J, \textbf{42}(1975), 765--778.
\bibitem{adams1}
D. R. Adams, \emph{Morrey Spaces}, Lecture notes in applied and numerical harmonic analysis, Birkh\"{a}user/Springer, Cham, 2015.
\bibitem{adams2}
D. R. Adams and J. Xiao, \emph{Morrey spaces in harmonic analysis}, Ark. Mat, \textbf{50}(2012), 201--230.
\bibitem{adams3}
D. R. Adams and J. Xiao, \emph{Nonlinear potential analysis on Morrey spaces and their capacities}, Indiana Univ. Math. J., \textbf{53}(2004), 1629--1663.
\bibitem{bong}
B. Bongioanni, E. Harboure, O. Salinas, \emph{Weighted inequalities for negative powers of Schr\"{o}dinger operators}, J. Math. Anal. Appl., \textbf{348} (2008), 12--27.
\bibitem{bong3}
B. Bongioanni, E. Harboure, O. Salinas, \emph{Weighted inequalities for commutators of Schr\"odinger-Riesz transforms}, J. Math. Anal. Appl., \textbf{392} (2012), 6--22.
\bibitem{bong4}
B. Bongioanni, E. Harboure, O. Salinas, \emph{Commutators of Riesz transforms related to Schr\"odinger operators}, J. Fourier Anal. Appl., \textbf{17} (2011), 115--134.
\bibitem{bong1}
B. Bongioanni, A. Cabral and E. Harboure, \emph{Extrapolation for classes of weights related to a family of operators and applications}, Potential Anal., \textbf{38} (2013), 1207--1232.
\bibitem{bong2}
B. Bongioanni, A. Cabral and E. Harboure, \emph{Lerner's inequality associated to a critical radius function and applications}, J. Math. Anal. Appl., \textbf{407} (2013), 35--55.
\bibitem{bui}
T. A. Bui, \emph{Weighted estimates for commutators of some singular integrals related to Schr\"odinger operators}, Bull. Sci. Math., \textbf{138} (2014), 270--292.
\bibitem{dziu}
J. Dziuba\'{n}ski,G. Garrig\'{o}s, T. Mart\'{i}nez, J. L. Torrea and J. Zienkiewicz, \emph{$BMO$ spaces related to Schr\"odinger operators with potentials satisfying a reverse H\"older inequality}, Math. Z., \textbf{249} (2005), 329--356.
\bibitem{fazio1}
G. Di Fazio and M. A. Ragusa, \emph{Interior estimates in Morrey spaces for strong solutions to nondivergence form equations with discontinuous coefficients}, J. Funct. Anal, \textbf{112}(1993), 241--256.
\bibitem{fazio2}
G. Di Fazio, D. K. Palagachev and M. A. Ragusa, \emph{Global Morrey regularity of strong solutions to the Dirichlet problem for elliptic equations with discontinuous coefficients}, J. Funct. Anal, \textbf{166}(1999),
179--196.
\bibitem{folland}
G. B. Folland, \emph{Harmonic Analysis in Phase Space}, Annals of Mathematics Studies, Princeton Univ. Press, Princeton, New Jersey, 1989.
\bibitem{folland2}
G. B. Folland and E. M. Stein, \emph{Hardy Spaces on Homogeneous Groups}, Princeton Univ. Press, Princeton, New Jersey, 1982.
\bibitem{folland3}
G. B. Folland and E. M. Stein, \emph{Estimates for the $\bar{\partial}_b$ complex and analysis on the Heisenberg group}, Comm. Pure Appl. Math., \textbf{27} (1974), 429--522. 
\bibitem{guliyev}
V. S. Guliyev, A. Eroglu and Y. Y. Mammadov, \emph{Riesz potential in generalized Morrey spaces on the Heisenberg group}, J. Math. Sci. (N.Y.) \textbf{189} (2013), 365--382.
\bibitem{jerison}
D. Jerison and A. Sanchez-Calle, \emph{Estimates for the heat kernel for a sum of squares of vector fields}, Indiana Univ. Math. J., \textbf{35} (1986) 835--854.
\bibitem{jiang}
Y. S. Jiang, \emph{Some properties of the Riesz potential associated to the Schr\"odinger operator on the Heisenberg groups}, Acta Math. Sinica (Chin. Ser),  \textbf{53} (2010), 785--794.
\bibitem{jiang2}
Y. S. Jiang, \emph{Endpoint estimates for fractional integral associated to Schr\"odinger operators on the Heisenberg groups}, Acta Math. Sci. Ser. B,  \textbf{31} (2011), 993--1000.
\bibitem{krantz}
S. G. Krantz, \emph{Analysis on the Heisenberg group and estimates for functions in Hardy classes of several complex variables}, Math. Ann., \textbf{244} (1979), 243--262. 
\bibitem{lin}
C. C. Lin and H. P. Liu, \emph{$BMO_{L}(\mathbb H^n)$ spaces and Carleson measures for Schr\"odinger operators}, Adv. Math., \textbf{228} (2011), 1631--1688.
\bibitem{lu}
G. Z. Lu, \emph{A Fefferman-Phong type inequality for degenerate vector fields and applications}, Panamer. Math. J., \textbf{6} (1996), 37--57.
\bibitem{morrey}
C. B. Morrey, \emph{On the solutions of quasi-linear elliptic partial differential equations}, Trans. Amer. Math. Soc, \textbf{43}(1938), 126--166.
\bibitem{pan}
G. X. Pan and L. Tang, \emph{Boundedness for some Schr\"odinger type operators on weighted Morrey spaces}, J. Funct. Spaces,  2014, Art. ID 878629, 10 pp.
\bibitem{shen}
Z. W. Shen, \emph{$L^p$ estimates for Schr\"odinger operators with certain potentials}, Ann. Inst. Fourier (Grenoble), \textbf{45} (1995), 513--546.
\bibitem{stein}
E. M. Stein, \emph{Singular Integrals and Differentiability Properties of Functions}, Princeton Univ. Press, Princeton, New Jersey, 1970.
\bibitem{stein2}
E. M. Stein, \emph{Harmonic Analysis: Real-Variable Methods, Orthogonality, and Oscillatory Integrals}, Princeton Univ. Press, Princeton, New Jersey, 1993.
\bibitem{tang}
L. Tang, \emph{Weighted norm inequalities for Schr\"odinger type operators}, Forum Math., \textbf{27} (2015), 2491--2532.
\bibitem{taylor}
M. E. Taylor, \emph{Analysis on Morrey spaces and applications to Navier-Stokes and other evolution equations}, Comm. Partial Differential Equations, \textbf{17} (1992), 1407--1456.
\bibitem{thangavelu}
S. Thangavelu, \emph{Harmonic Analysis on the Heisenberg Group}, Progress in Mathematics, Vol. 159, Birkh\"auser, Boston/Basel/Berlin, 1998.
\bibitem{xiao}
J. S. Xiao and J. X. He, \emph{Riesz potential on the Heisenberg group}, J. Inequal. Appl., 2011, Art. ID 498638, 13 pp.
\end{thebibliography}
\end{document}